 \newtheorem{theorem}{Theorem}[section]
\newtheorem{Legendre theorem}{Legendre Theorem}[section]
\newtheorem{lemma}[theorem]{Lemma}
\newtheorem{conjecture}[theorem]{Conjecture}
\newtheorem{corollary}[theorem]{Corollary}
\newtheorem{example}[theorem]{Example}
\newtheorem{remark}[theorem]{Remark}
\newtheorem{proposition}[theorem]{Proposition}
\newtheorem{definition}[theorem]{Definition}
\newcommand{\Tr}{{\rm Tr}}
\newcommand{\gf}{ {{\mathbb F}} }
\begin{document}

\begin{frontmatter}

%% Title, authors and addresses

%% use the tnoteref command within \title for footnotes;
%% use the tnotetext command for the associated footnote;
%% use the fnref command within \author or \address for footnotes;
%% use the fntext command for the associated footnote;
%% use the corref command within \author for corresponding author footnotes;
%% use the cortext command for the associated footnote;
%% use the ead command for the email address,
%% and the form \ead[url] for the home page:
%%
%% \title{Title\tnoteref{label1}}
%% \tnotetext[label1]{}
%% \author{Name\corref{cor1}\fnref{label2}}
%% \ead{email address}
%% \ead[url]{home page}
%% \fntext[label2]{}
%% \cortext[cor1]{}
%% \address{Address\fnref{label3}}
%% \fntext[label3]{}

\title{Permutation trinomials over $\gf_{2^m}$: a corrected version}
\tnotetext[fn1]{*Corresponding author.  P. Yuan's research was supported by the NSF of China (Grant No. 11271142, 11671153).}

%% use optional labels to link authors explicitly to addresses:
%% \author[label1,label2]{<author name>}
%% \address[label1]{<address>}
%% \address[label2]{<address>}
\author[ypz]{Danyao Wu}
\ead{wudanyao@163.com}
\author[ypz]{Pingzhi Yuan*}
\ead{yuanpz@scnu.edu.cn}
\author[cding]{Cunsheng Ding}
\ead{cding@ust.hk}
\author[ypz]{Yuzhen Ma}
\ead{617875902@qq.com}

%\cortext[zcz]{Corresponding author}
\address[ypz]{School of Mathematics, South China Normal University, Guangzhou 510631, China}
\address[cding]{Department of Computer Science
                                                  and Engineering, The Hong Kong University of Science and Technology,
                                                  Clear Water Bay, Kowloon, Hong Kong, China}

\begin{abstract}
Permutation polynomials are an interesting subject of mathematics and have applications in other areas of
mathematics and engineering.
In this paper,   we determine all permutation
trinomials over $\gf_{2^m}$ in Zieve's paper \cite{Zi13}. We prove a conjecture proposed by Gupta and Sharma in \cite{GS16} and obtain some new  permutation trinomials over $\gf_{2^m}$. Finally,  we show that some classes of permutation trinomials with parameters are QM equivalent to some known permutation trinomials.

\end{abstract}

\begin{keyword}
polynomial \sep permutation polynomial.
%% PACS codes here, in the form: \PACS code \sep code

%% MSC codes here, in the form: \MSC code \sep code
%% or \MSC[2008] code \sep code (2000 is the default)
\MSC  11C08 \sep 12E10

\end{keyword}

\end{frontmatter}

\section{Introduction}

Let  $\gf_q$ be the finite field with $q$ elements,  where $q$ is a prime power, and
let $\gf_q[x]$
be the ring of polynomials in a single indeterminate $x$ over $\gf_q$. A polynomial
$f \in\gf_q[x]$ is called a {\em permutation polynomial} (PP) of $\gf_q$ if it induces
a one-to-one map from $\gf_q$ to itself.

Permutation polynomials over finite fields have been an interesting
subject of study for many years, and have applications in coding
theory \cite{DH13, LC07}, cryptography \cite{RSA, SH}, combinatorial
designs \cite{DY06}, and other
areas of mathematics and engineering. Information about properties,
constructions, and applications of permutation polynomials may be
found in Lidl and Niederreiter \cite{LR97}, and Mullen \cite{Mull}.
Some recent progress on permutation
polynomials can be found in \cite{AGW, DQWYY15, CK09, CK08, DXY, DY06, GS16, Hou151, Hou152, LP97, LHT13, LQC15, Ky10, Ma11, MZFG15,  PL01, Wa07, YD, ZZH, ZH, Zi09, Zi10, Zi13}.

Permutation binomials and trinomials have attracted people's attention due to their simple algebraic form and
additional extraordinary properties. Only a very small number of classes of permutation binomials and trinomials are known.
In this paper, we are particularly interested in classes of permutation trinomials over finite fields
with even characteristic. It is known that there are no permutation binomials with
both nonzero coefficients equal to 1 over finite fields with even characteristic. This
motivates us to find new classes of permutation trinomials with  coefficients
over finite fields with even characteristic. For a brief
survey on known classes of permutation trinomials, we refer the reader to \cite{DQWYY15, GS16, Hou151, LQC15, YD, Zi13}. In \cite{GS16}, the authors obtained some new types of
permutation trinomials and proposed the following conjecture.

\begin{conjecture} {\rm (\cite{GS16} Conjecture 2)} The polynomials $g(x) := x^5+x^{2^m+4}+x^{5\times2^m}\in\gf_{2^{2m}}[x]$ are permutation
trinomials over $\gf_{2^{2m}}$ if and only if $m\equiv2\pmod{4}$\end{conjecture}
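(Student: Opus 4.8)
The plan is to use the standard reduction for polynomials of the form $x^r h(x^{q-1})$ over $\gf_{q^2}$ (see \cite{PL01,Zi13,AGW}). Set $q=2^m$. The three exponents $5$, $q+4$, $5q$ are all congruent to $5$ modulo $q-1$, so $g(x)=x^5h(x^{q-1})$ with $h(y)=1+y+y^5\in\gf_2[y]$. By that reduction, $g$ permutes $\gf_{q^2}$ if and only if (a) $\gcd(5,q-1)=1$ and (b) $\psi(x):=x^5h(x)^{q-1}=x^5(1+x+x^5)^{q-1}$ permutes the group $\mu_{q+1}$ of $(q+1)$-st roots of unity in $\gf_{q^2}$. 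Condition (a) is settled at once: $5\mid 2^m-1$ exactly when $4\mid m$, so (a) holds iff $m\not\equiv 0\pmod 4$; in particular $g$ is not a PP when $m\equiv 0\pmod 4$.

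Next I would analyse $\psi$ on $\mu_{q+1}$. For $u\in\mu_{q+1}$ we have $u^q=u^{-1}$, hence $h(u)^{q-1}=h(u)^q/h(u)=u^{-5}(u^5+u^4+1)/(u^5+u+1)$ and so $\psi(u)=(u^5+u^4+1)/(u^5+u+1)$ whenever $h(u)\neq 0$. Over $\gf_2$ one has $u^5+u+1=(u^2+u+1)(u^3+u^2+1)$ and $u^5+u^4+1=(u^2+u+1)(u^3+u+1)$, the two cubics being the irreducible factors of $\Phi_7$. The roots of these cubics lie in $\gf_8$, whose non-identity elements have order $7$, and since $7\nmid 2^m+1$ for every $m$, neither cubic vanishes on $\mu_{q+1}$; on the other hand $u^2+u+1=\Phi_3(u)$ vanishes on $\mu_{q+1}$ exactly when $3\mid q+1$, i.e. when $m$ is odd. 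Therefore, if $m$ is odd then $h$ has a root $\omega\in\mu_{q+1}$ (a primitive cube root of unity) with $\psi(\omega)=0\notin\mu_{q+1}$, so $\psi$ does not permute $\mu_{q+1}$ and $g$ is not a PP. This leaves only $m\equiv 2\pmod 4$, where $\gcd(5,q-1)=1$, $h$ has no zero on $\mu_{q+1}$, and $\psi(u)=(u^3+u+1)/(u^3+u^2+1)$ on $\mu_{q+1}$; it remains to prove this rational map is a bijection of the finite set $\mu_{q+1}$.

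The heart of the proof is the injectivity of $\psi$ on $\mu_{q+1}$. Suppose $\psi(u)=\psi(v)$ with $u,v\in\mu_{q+1}$; cross-multiplying and collecting terms in characteristic $2$ yields $(u+v)\bigl[u^2v^2+uv(u+v)+uv+(u+v)+1\bigr]=0$, so for $u\neq v$ the bracketed factor is zero. Dividing it by $uv$ and replacing $u^{-1}=u^q$, $v^{-1}=v^q$ turns it into $T(uv)+T(u)+T(v)=1$, where $T=\mathrm{Tr}_{\gf_{q^2}/\gf_q}$. Write $a=T(u)$, $b=T(v)$; these lie in $\mathrm{Im}(T|_{\mu_{q+1}})=\{0\}\cup\{t\in\gf_q^*:\mathrm{Tr}_{\gf_q/\gf_2}(1/t)=1\}$, and $T(uv)$ is a root of $Z^2+abZ+(a+b)^2$ (since $T(uv)$ and $T(uv^{-1})$ have sum $ab$ and product $(a+b)^2$). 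Substituting $T(uv)=a+b+1$ into this quadratic collapses, again in characteristic $2$, to the necessary condition $ab(a+b+1)=1$. Since the right side is nonzero, $a,b\neq 0$, whence $\mathrm{Tr}_{\gf_q/\gf_2}(1/a)=\mathrm{Tr}_{\gf_q/\gf_2}(1/b)=1$; and $a=b$ would force $a^2=1$, i.e. $a=1$, impossible because $\mathrm{Tr}_{\gf_q/\gf_2}(1)=m\bmod 2=0$ for even $m$.

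Finally I would rule out $ab(a+b+1)=1$. Putting $x=1/a$, $y=1/b$, the relation becomes $x^2y^2+xy+x+y=0$, i.e. $x^2y^2+(x+1)y+x=0$, a quadratic in $y$ that is solvable over $\gf_q$ only if $\mathrm{Tr}_{\gf_q/\gf_2}\!\bigl(x^3/(x+1)^2\bigr)=0$. But, using the division $x^3=x(x+1)^2+x$ together with $\mathrm{Tr}(w^2)=\mathrm{Tr}(w)$ applied to $x/(x+1)^2=1/(x+1)+1/(x+1)^2$, one gets $\mathrm{Tr}_{\gf_q/\gf_2}\!\bigl(x^3/(x+1)^2\bigr)=\mathrm{Tr}_{\gf_q/\gf_2}(x)$, which equals $1$ since $a\in\mathrm{Im}(T|_{\mu_{q+1}})\setminus\{0\}$. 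The contradiction shows there is no collision, so $\psi$ is a bijection of $\mu_{q+1}$; together with condition (a) this gives that $g$ is a permutation polynomial of $\gf_{2^{2m}}$ if and only if $m\equiv 2\pmod 4$. The hard part is exactly this last stretch --- distilling the compact condition $ab(a+b+1)=1$ and spotting the trace identity $\mathrm{Tr}_{\gf_q/\gf_2}\!\bigl(x^3/(x+1)^2\bigr)=\mathrm{Tr}_{\gf_q/\gf_2}(x)$ that makes it unsolvable --- while the degenerate sub-cases ($a=b$, $a=1$, zeros of the cubics) are harmless because $m$ even forces $3\nmid q+1$ and $\mathrm{Tr}_{\gf_q/\gf_2}(1)=0$.
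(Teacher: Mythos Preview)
Your argument is correct. The reduction to the rational map $\psi(u)=(u^3+u+1)/(u^3+u^2+1)$ on $\mu_{q+1}$, the elimination of $m$ odd via $\Phi_3$ and the observation that $7\nmid 2^m+1$ (so the two cubics never vanish on $\mu_{q+1}$) all match the paper. The symmetric-function identity $T(uv)+T(uv^{-1})=ab$, $T(uv)\,T(uv^{-1})=(a+b)^2$ is right, the collapse to $ab(a+b+1)=1$ is clean, and the trace identity $\Tr\!\bigl(x^3/(x+1)^2\bigr)=\Tr(x)$ (via $x^3=x(x+1)^2+x$ and $x/(x+1)^2=1/(x+1)+1/(x+1)^2$) does kill the quadratic in $y$ over $\gf_q$; the side conditions $a,b\neq 0$ and $a\neq 1$ (hence $x+1\neq 0$) follow from $\Tr_{\gf_q/\gf_2}(1/a)=1$ and $\Tr_{\gf_q/\gf_2}(1)=0$ for even $m$.

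Where you diverge from the paper is precisely at the ``hard part''. The paper does \emph{not} prove injectivity of $(x^3+x+1)/(x^3+x^2+1)$ on $\mu_{q+1}$ by a direct trace computation. Instead it deduces that this rational function permutes $\mu_{q+1}$ from Theorem~\ref{zifin} (the case $n=3$, $k=-1$ of Zieve's construction), via Lemma~\ref{le41}(i): since $x^{3-(q+1)}(x^{3(q-1)}+x^{q-1}+1)$ permutes $\gf_{q^2}^\ast$ for every $m$, Proposition~\ref{pozi} gives that $x^{3-(q+1)}(x^3+x+1)^{q-1}=(x^3+x^2+1)/(x^3+x+1)$ permutes $\mu_{q+1}$, and hence so does its inverse. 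In other words, the paper recognises $\psi$ as (the reciprocal of) a R\'edei-type bijection already supplied by Zieve's framework, so no ad~hoc collision analysis is needed.

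Your route is more elementary and self-contained---it never touches Zieve's Theorems~\ref{thZiev1}--\ref{thZiev2} or Lemmas~\ref{leZi1}--\ref{leZi2}---but it is also more bespoke: the trace trick is tailored to the degree-$3$ numerator/denominator and would need fresh work for the companion results (Theorems~4.4--4.6). The paper's approach, by contrast, gets all of those at once from the same Lemma~\ref{le41}, and explains \emph{why} $(x^3+x+1)/(x^3+x^2+1)$ is a bijection rather than merely verifying it.
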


In this paper, using some known results for permutation polynomials of the form $x^rh\left(x^{(q-1)/s}\right)$ in Zieve \cite{Zi13}, we prove the above conjecture (see, Theorem 4.2).

There are also many permutation trinomials in \cite{Zi13} by Zieve, which are not so well-known because they are not explicitly presented.  Another motivation of this paper is to give the explicit form of all  permutation trinomials over $\gf_{2^m}$ in Zieve's paper \cite{Zi13}.

This paper is organized as follows. In Section 2, we introduce some basic notation and lemmas. In particular, we introduce the
 QM equivalence of polynomials over finite fields (see, Definition 2.2).   In Section 3, we determine all permutation
trinomials over $\gf_{2^m}$ in Zieve's paper \cite{Zi13}. We prove the above conjecture and obtain some new  permutation trinomials over $\gf_{2^m}$ in Section 4. In Section 5, we show that some classes of permutation trinomials with parameters in \cite{MZFG15} and \cite{LQC15} are QM equivalent to some known permutation trinomials.

Notation: $\gf_q^\ast=\gf_q\setminus\{0\}$, $\bar{\gf}_q$ denotes the algebraic closure of $\gf_q$ and $\mu_{q+1}$ denotes the set of $(q+1)$-th roots of unity in $\gf_{q^2}^\ast$.

\section{Auxiliary results \& the main Lemma}

In this section, we present some auxiliary results that will be needed in the
sequel.
%For two positive integers $m$ and $n$ with $m|n$, we use $Tr_m^n(\cdot)$ to denote the $trace  \,\, function $ from $\gf_{2^n}$ to $\gf_{2^m}$, i.e.,
%$$ Tr_m^n(x)=x+x^{2^m}+x^{2^{2m}}+\cdots+x^{2^{(n/m-1)m}}.$$
%For $m=1$, we get the absolute trace function mapping onto the prime field $\gf_2$, which is denoted by $Tr_n$.
Let $m$ be a positive integer. For each element $x$ in the field $\gf_{2^{2m}}$, we define
$$\bar{x}=x^{2^m}$$
in analogy with the usual complex conjugate. Then we have

(i) $\overline{x+y}=\bar{x}+\bar{y}$ and $\overline{xy}=\bar{x}\bar{y}$ for all $x, y\in\gf_{2^{2m}}$, and

(ii) $x+\bar{x}\in \gf_{2^{m}}$ and  $x\bar{x}\in \gf_{2^{m}}$ for all $x\in \gf_{2^{2m}}$.

The following result is obvious, so we omit the proof.

\begin{lemma} Let $f(x)\in\gf_q[x]$ be a polynomial with $f(0)=0$. Then $f(x)$ is a PP  over $\gf_q$ if and only if $f(x)$ is a bijection from $\gf_q^\ast$ to $\gf_q^\ast$.\end{lemma}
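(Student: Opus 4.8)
The plan is to verify both implications directly from the definition of a permutation polynomial as a bijection of $\gf_q$, using the single hypothesis $f(0)=0$ to pin down the behaviour at the origin.

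For the forward direction I would assume $f$ is a PP of $\gf_q$, i.e.\ $f$ induces a bijection of $\gf_q$ onto itself. First I would observe that injectivity together with $f(0)=0$ forces $f(x)\neq 0$ for every $x\in\gf_q^\ast$, so $f$ restricts to a map $\gf_q^\ast\to\gf_q^\ast$; this restriction is injective since $f$ itself is. For surjectivity of the restriction, given $y\in\gf_q^\ast$ choose the unique $x\in\gf_q$ with $f(x)=y$; because $y\neq 0=f(0)$ we must have $x\neq 0$, hence $x\in\gf_q^\ast$. Thus the restriction is a bijection from $\gf_q^\ast$ to $\gf_q^\ast$.

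For the converse, suppose $f$ is a bijection from $\gf_q^\ast$ to $\gf_q^\ast$. Together with $f(0)=0$ this gives $f(\gf_q)=\{0\}\cup f(\gf_q^\ast)=\{0\}\cup\gf_q^\ast=\gf_q$, so $f$ is surjective on $\gf_q$; since $\gf_q$ is finite, a surjective self-map is automatically bijective, so $f$ is a PP. Alternatively one checks injectivity by hand: if $f(a)=f(b)$ and, say, $a=0$, then $f(b)=0\notin\gf_q^\ast$ forces $b=0$, while if $a,b\in\gf_q^\ast$ then $a=b$ by injectivity of the restriction.

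As the authors indicate, there is no genuine obstacle here: the whole argument is a one-line bookkeeping of where $0$ is sent. The only point worth stating explicitly is the appeal to the finiteness of $\gf_q$ (equivalently, a cardinality count), which is what lets a bijection of the $(q-1)$-element set $\gf_q^\ast$ extend uniquely to a bijection of the $q$-element set $\gf_q$ once the value $f(0)=0$ has been fixed.
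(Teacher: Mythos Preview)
Your proof is correct. The paper itself does not supply a proof at all, declaring the result ``obvious'' and omitting the argument, so your direct verification of both implications is exactly the sort of bookkeeping one would fill in.
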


If $f(x)$ is a bijection from $\gf_q^\ast$ to $\gf_q^\ast$, then we say that $f(x)$ is a PP over $\gf_q^\ast$ throughout the paper. By Lemma 2.1, we can focus our attention to the problem as to when $f(x)$ is a PP over $\gf_q^\ast$ whenever we study the permutation property of $f(x)$ over  $\gf_q$.

\begin{definition}Two permutation polynomials $f(x)$ and $g(x)$ in $\gf_q[x]$ are said to be quasi-multiplicative (QM, for short) equivalent if
there exists an integer $1 \le d \le q - 1$ with $\gcd(d,\, q- 1) = 1$ and   $f(x) \equiv ag(cx^d)\pmod{x^q-x}$ , where $a, \,\, c\in\gf_{q}^\ast$.\end{definition}

Obviously, $f(x)$ is a PP  over $\gf_q^\ast$ if and only if $af(cx^d)$, where $a, \,\, c\in\gf_{q}^\ast,\,\, \gcd(d,\,\, q-1)=1$, is a PP  over $\gf_q^\ast$. For the related references on the equivalence of polynomials, we refer the reader to \cite{Hou152} and \cite{LQC15}.

Zieve's constructions rely on the following result whose short proofs were given in \cite{Zi08, Zi09, Zi10, Zi13}.

\begin{lemma}\label{lezi} Pick an $h\in\gf_q[x]$ and integers  $r>0, \,\, s>0$  such that $s|(q-1)$. Then $f(x):=x^rh\left(x^{(q-1)/s}\right)$ permutes $\gf_q$ if and only if

(1) $\gcd(r,\,\, (q-1)/s)=1$ and

(2) $x^rh(x)^{(q-1)/s}$ permutes the set of $s$-th roots of unity in $\gf_q^\ast$. \end{lemma}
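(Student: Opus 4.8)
\emph{Proof plan.} The idea is to pass to the multiplicative group $\gf_q^\ast$ and to exploit the group homomorphism furnished by the $(q-1)/s$-th power map. Write $d=(q-1)/s$, so $q-1=ds$ and $d\mid q-1$. Since $r>0$ we have $f(0)=0$, so by Lemma 2.1 it suffices to decide when $f$ is a bijection of $\gf_q^\ast$. First I would record the structure of the power map $\pi\colon\gf_q^\ast\to\gf_q^\ast$, $\pi(x)=x^d$: because $\gcd(d,q-1)=d$, its kernel $\mu_d=\{x:x^d=1\}$ has order $d$ and its image is the subgroup $\mu_s$ of $s$-th roots of unity (the unique subgroup of $\gf_q^\ast$ of order $s$), so every nonempty fibre of $\pi$ is a coset of $\mu_d$ and has exactly $d$ elements. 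The crucial identity is
$$ \pi\bigl(f(x)\bigr)=f(x)^d=\bigl(x^d\bigr)^r h\bigl(x^d\bigr)^d=g\bigl(\pi(x)\bigr),\qquad x\in\gf_q^\ast, $$
where $g(y):=y^r h(y)^d$ is precisely the map appearing in condition (2); thus $\pi\circ f=g\circ\pi$ as maps on $\gf_q^\ast$.

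For the ``if'' direction, assume (1) and (2). Condition (2) says $g$ permutes $\mu_s$, so $0\notin g(\mu_s)$, hence $h$ has no zero on $\mu_s$ and $f(\gf_q^\ast)\subseteq\gf_q^\ast$. To get injectivity of $f$ on $\gf_q^\ast$ (which, the set being finite, gives the permutation property), suppose $f(x_1)=f(x_2)$. Applying $\pi$ and using $\pi\circ f=g\circ\pi$ together with the injectivity of $g$ on $\mu_s$ gives $x_1^d=x_2^d$, so $x_2=ux_1$ with $u^d=1$. Substituting back yields $f(x_2)=u^r f(x_1)$, so $u^r=1$; with $u^d=1$ this forces $u^{\gcd(r,d)}=1$, and (1) gives $u=1$, i.e.\ $x_1=x_2$.

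For the ``only if'' direction, assume $f$ permutes $\gf_q$; then $f$ permutes $\gf_q^\ast$. For (1): if $e:=\gcd(r,d)>1$, pick $u\in\gf_q^\ast$ of order $e$ (it exists since $e\mid d\mid q-1$); then $u^d=u^r=1$, so $f(ux)=u^rx^rh(u^dx^d)=f(x)$ for all $x$, contradicting injectivity. For (2): since $g(\zeta)=f(x)^d\in\mu_s$ for any $x\in\pi^{-1}(\zeta)$, $g$ maps $\mu_s$ into itself; moreover, as $f$ is injective, each $f(\pi^{-1}(\zeta))$ has $d$ elements and lies in $\pi^{-1}(g(\zeta))$, which also has $d$ elements, so $f(\pi^{-1}(\zeta))=\pi^{-1}(g(\zeta))$ for every $\zeta\in\mu_s$. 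Because $f$ permutes $\gf_q^\ast=\bigsqcup_{\zeta\in\mu_s}\pi^{-1}(\zeta)$, the sets $\pi^{-1}(g(\zeta))$ are pairwise disjoint and cover $\gf_q^\ast$; since the fibres of $\pi$ are nonempty, this forces $\zeta\mapsto g(\zeta)$ to be a bijection of $\mu_s$, which is condition (2).

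I expect the only genuine subtlety to be the group-theoretic bookkeeping of the first paragraph — correctly identifying $\{x^{(q-1)/s}:x\in\gf_q^\ast\}$ with $\mu_s$, pinning down the common fibre size $d$, and ruling out the case in which $h$ vanishes somewhere on $\mu_s$ — rather than any single hard step; once that set-up is in place, both implications reduce to the short counting arguments above.
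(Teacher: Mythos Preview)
Your argument is correct. The commuting-square identity $\pi\circ f=g\circ\pi$ with $\pi(x)=x^{(q-1)/s}$, together with the coset/fibre counting, handles both directions cleanly; the only point I would make explicit is that in the ``only if'' part you implicitly use $h(\zeta)\neq 0$ for $\zeta\in\mu_s$ (which follows since $f$ permutes $\gf_q^\ast$), and this is what guarantees $g(\zeta)\in\mu_s$ rather than $0$.

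As for comparison with the paper: the paper does not actually prove this lemma. It is stated as Lemma~2.3 with the remark that ``short proofs were given in \cite{Zi08, Zi09, Zi10, Zi13}'', and the authors simply quote it. Your proof is precisely the standard argument found in those references (notably Zieve's 2009 \emph{Proc.\ Amer.\ Math.\ Soc.} paper), where one factors the action of $f$ on $\gf_q^\ast$ through the quotient by $\mu_{(q-1)/s}$ and observes that $f$ acts as $x\mapsto x^r$ within each coset and as $g$ on the set of cosets $\cong\mu_s$. So there is no alternative route to compare against; you have reproduced the intended proof.
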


We also have the following result whose proof can be found in \cite{Zi08, Zi09, Zi10, Zi13}.

\begin{remark}\label{rezi} Pick an $h\in\gf_q[x]$ and integers $r, \,\, s>0$ such that $s|(q-1)$. Then $f(x):=x^rh\left(x^{(q-1)/s}\right)$ permutes $\gf_q^\ast$ if and only if

(1) $\gcd(r,\,\, (q-1)/s)=1$ and

(2) $x^rh(x)^{(q-1)/s}$ permutes the set of $s$-th roots of unity in $\gf_q^\ast$. \end{remark}

From Lemma \ref{lezi} and Remark \ref{rezi}, we obtain the following result easily.

\begin{proposition}\label{pozi} Pick an $h\in\gf_q[x]$ and integers $r>0, \,\, s>0$ such that $s|(q-1)$. Assume that $f(x):=x^rh\left(x^{(q-1)/s}\right)$ permutes $\gf_q$, then
 $x^rh(x)^{(q-1)/s}$ permutes the set of $s$-th roots of unity in $\gf_q^\ast$. Furthermore, let $k$ be an integer, then $g(x):=x^{ks}f(x)$ permutes $\gf_q^\ast$ if and only if
$\gcd(r+ks,\,\, (q-1)/s)=1$. \end{proposition}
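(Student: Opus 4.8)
The plan is to derive everything from Lemma~\ref{lezi} and Remark~\ref{rezi} applied to two different representations of the same polynomial. First, since $f(x)=x^rh(x^{(q-1)/s})$ permutes $\gf_q$, Lemma~\ref{lezi} immediately gives both that $\gcd(r,(q-1)/s)=1$ and that the map $\varphi\colon \zeta\mapsto \zeta^r h(\zeta)^{(q-1)/s}$ permutes $\mu_s$, the set of $s$-th roots of unity in $\gf_q^\ast$; the first assertion of the Proposition is exactly this second fact, so nothing more is needed there.

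For the second assertion I would rewrite $g(x)=x^{ks}f(x)=x^{ks}\cdot x^r h(x^{(q-1)/s}) = x^{r+ks}\,h(x^{(q-1)/s})$, so that $g$ has the same shape as in Remark~\ref{rezi} but with exponent $r+ks$ in place of $r$ and the same $h$ and $s$. By Remark~\ref{rezi}, $g$ permutes $\gf_q^\ast$ if and only if (1) $\gcd(r+ks,(q-1)/s)=1$ and (2) the map $\psi\colon \zeta\mapsto \zeta^{r+ks}h(\zeta)^{(q-1)/s}$ permutes $\mu_s$. The key observation is that condition (2) is automatic: for $\zeta\in\mu_s$ we have $\zeta^{s}=1$, hence $\zeta^{r+ks}=\zeta^r(\zeta^s)^k=\zeta^r$, so $\psi=\varphi$ on $\mu_s$, and we already know $\varphi$ permutes $\mu_s$. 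Therefore the permutation property of $g$ on $\gf_q^\ast$ reduces precisely to the single numerical condition $\gcd(r+ks,(q-1)/s)=1$, which is the claim.

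There is essentially no obstacle here; the only point requiring a little care is the bookkeeping on which hypotheses of Remark~\ref{rezi} still need to be checked for $g$ — namely noticing that the ``permutes $\mu_s$'' condition transfers verbatim from $f$ to $g$ because $ks$ is a multiple of $s$, so that only the gcd condition is genuinely new. One should also note that Remark~\ref{rezi} (rather than Lemma~\ref{lezi}) is the right tool, since $r+ks$ need not be positive for arbitrary integers $k$, and Remark~\ref{rezi} is stated without the positivity restriction on the leading exponent. With that understood, the equivalence $g$ permutes $\gf_q^\ast \iff \gcd(r+ks,(q-1)/s)=1$ follows directly.
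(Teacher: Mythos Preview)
Your proof is correct and follows essentially the same approach as the paper: both invoke Lemma~\ref{lezi} to obtain that $x^r h(x)^{(q-1)/s}$ permutes $\mu_s$, then observe that $\alpha^{r+ks}h(\alpha)^{(q-1)/s}=\alpha^r h(\alpha)^{(q-1)/s}$ for $\alpha\in\mu_s$, and finally apply Remark~\ref{rezi} to conclude that only the gcd condition remains. Your write-up is simply more explicit about why Remark~\ref{rezi} (rather than Lemma~\ref{lezi}) is needed for $g$, which is a nice clarification.
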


\begin{proof} By Lemma \ref{lezi}, we deduce that $x^rh(x)^{(q-1)/s}$ permutes the set of $s$-th roots of unity in $\gf_q^\ast$. Since  for any $\alpha$ in the set of $s$-th roots of unity of $\gf_q^\ast$, $\alpha^{r+ks}h(\alpha)^{(q-1)/s}= \alpha^rh(\alpha)^{(q-1)/s}$,  by Remark \ref{rezi}, $g(x)$ permutes  $\gf_q^\ast$ if and only if  $\gcd(r+ks,\,\, (q-1)/s)=1$. This completes the proof.\end{proof}

We need the following proposition in Section 5, we refer the reader to \cite{MZFG15} and \cite{LQC15} for the original proof. For the convenience of the reader, we give a short proof here.
\begin{proposition}\label{potri1} Let $m>1$ be an odd integer, and write $k=\frac{m+1}{2}$. Then $f(x)=x+x^{2^k-1}+x^{2^k+1}$ is a permutation polynomial over $\gf_{2^m}^\ast$.
\end{proposition}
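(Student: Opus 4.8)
The plan is to show directly that $f$ is injective on $\gf_{2^m}$; since $\gf_{2^m}$ is finite this makes $f$ a bijection of $\gf_{2^m}$, and because $f(0)=0$ it then follows from Lemma~2.1 that $f$ permutes $\gf_{2^m}^\ast$. Concretely, I would fix an arbitrary $\beta\in\gf_{2^m}$ and prove that the equation $f(x)=\beta$ has at most one solution $x\in\gf_{2^m}$, by exhibiting the unique candidate explicitly.

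Write $k=(m+1)/2$, so that $m=2k-1$ and hence $2^{k-1}\cdot 2^{k}=2^{2k-1}=2^{m}$; this identity is the engine of the argument. Using $1+x^{2}=(x+1)^{2}$, the equation $f(x)=\beta$ reads $x^{2^{k}-1}(x+1)^{2}=x+\beta$, and multiplying by $x$ gives $x^{2^{k}}(x+1)^{2}=x(x+\beta)$, so for $x\neq 1$ we have $x^{2^{k}}=x(x+\beta)/(x+1)^{2}$. Raising this to the $2^{k-1}$-th power and using $x^{2^{m}}=x$ collapses the left side to $x$, yielding $x=x^{2^{k-1}}\bigl(x^{2^{k-1}}+\beta^{2^{k-1}}\bigr)/\bigl(x^{2^{k-1}}+1\bigr)^{2}$, while taking square roots in $x^{2^{k}}=x(x+\beta)/(x+1)^{2}$ (legitimate in characteristic $2$) gives $x^{2^{k-1}}=rs/(r^{2}+1)$, where I set $r=\sqrt{x}$ and $s=\sqrt{x+\beta}$, so that $r^{2}+s^{2}=\beta$. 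Substituting this expression for $x^{2^{k-1}}$ into the previous identity and clearing denominators, I expect that after cancelling the factor $r$ (which corresponds to the excluded point $x=0$) and the factor $r^{2}+1$ (the excluded point $x=1$) one is left with $rs^{2}+r^{3}+r+\beta^{2^{k-1}}s=0$; since $s^{2}=r^{2}+\beta$, this collapses to the linear relation $r(\beta+1)=\beta^{2^{k-1}}s$. Squaring it and substituting $s^{2}=r^{2}+\beta$ forces $r^{2}\bigl(\beta^{2^{k}}+\beta^{2}+1\bigr)=\beta^{2^{k}+1}$, and one checks that $\beta^{2^{k}}+\beta^{2}+1$ has no root in $\gf_{2^{m}}$ — if $\beta^{2^{k}}=\beta^{2}+1$, then raising to the $2^{k-1}$-th power and using $2^{k-1}\cdot 2^{k}=2^{m}$ gives $\beta=\beta^{2}$, but neither $0$ nor $1$ satisfies the original equation — so $x=r^{2}=\beta^{2^{k}+1}/\bigl(\beta^{2^{k}}+\beta^{2}+1\bigr)$ is uniquely determined. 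It remains to observe that $f(0)=0$ and $f(1)=1$, so that $x\in\{0,1\}$ can be a solution only for $\beta=0$ or $\beta=1$, and in those two cases the relation above (with $\beta+1=0$, respectively $\beta=0$) yields no further solution; hence every fibre of $f$ has size at most one, and $f$ is a permutation.

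The step I expect to be the main obstacle is the middle computation: reducing the high-degree polynomial identity in $r$ and $s$ down to the single linear relation $r(\beta+1)=\beta^{2^{k-1}}s$. This is precisely where the exponent set $\{1,\,2^{k}-1,\,2^{k}+1\}$ and the choice $m=2k-1$ get used, and one has to keep careful track of the factors $r$ and $r^{2}+1$, which is exactly the reason the points $0$ and $1$ must be handled separately. I note that the Zieve-type criterion of Lemma~\ref{lezi} does not seem applicable here: the pairwise differences of the exponents are $2^{k}-2$, $2^{k}$, $2$, whose only common factor is $2$, whereas $2^{m}-1$ is odd, so there is no nontrivial $s\mid (2^{m}-1)$ to feed into that lemma; a direct fibre-counting argument as above therefore appears to be the natural route.
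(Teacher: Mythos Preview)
Your proof is correct and follows essentially the same strategy as the paper: both arguments show directly that the fibre $f(x)=c$ contains at most one point by deriving the explicit formula $x=c^{2^{k}+1}/\bigl(c^{2^{k}}+c^{2}+1\bigr)$, and both dispose of the denominator by the same Frobenius trick (raise the vanishing relation to a $2^{k}$-th, resp.\ $2^{k-1}$-th, power to force $c\in\{0,1\}$). The only difference is cosmetic: the paper substitutes $y=x^{2^{k}}$ and manipulates the resulting pair of equations, whereas you pass through the square-root variables $r=\sqrt{x}$, $s=\sqrt{x+\beta}$; the paper's route reaches the linear relation a bit more quickly and avoids having to separate out the cases $x=0,1$, but both land on the identical formula.
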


\begin{proof} Let $y=x^{2^k}$. Then we have $y^{2^{k-1}}=x$ and  $y^{2^k}=x^2$.
To prove that $f(x)$ is a permutation polynomial over $\gf_{2^m}^\ast$,  it suffices to prove that $f(x)=c$ has precisely one solution in $\gf_{2^m}^\ast$. Let $d=c^{2^k}$. Considering
the following equation
\begin{equation}\label{eq2-1} x+\frac{y}{x}+xy+c=0, \end{equation}
we then  have \begin{equation}\label{eq2-2} x^2+y+x^2y+cx=0.\end{equation}
Raising both sides of Eq.(\ref{eq2-2}) to the $2^{k-1}$-th power, we get
\begin{equation}\label{eq2-3}y+x+xy+c^{2^{k-1}}x^{2^{k-1}}=0.\end{equation}
Adding Eq.(\ref{eq2-1}) and Eq.(\ref{eq2-3}) together, we obtain
\begin{equation}\label{eq2-4}
y+\frac{y}{x}+c+c^{2^{k-1}}x^{2^{k-1}} =0.\end{equation}
Raising both sides of Eq.(\ref{eq2-4}) to the $2^k$-th power, we get
\begin{equation}x^2+\frac{x^2}{y}+d+cx=0.\end{equation}
Multiplying both sides of the above equation by $(x+c)\frac{y}{x}$, we obtain
\begin{equation}\label{eq2-6}x^2y+c^2y+dy+cd\frac{y}{x}+x^2+cx=0. \end{equation}
Adding Eq.(\ref{eq2-2}) and Eq.(\ref{eq2-6}) together and simplifying the newly obtained equation, we get
\begin{equation}\label{eq2-7}
(1+c^2+d)x=cd.\end{equation}
We claim that $1+c^2+d\ne0$ for any $c\in\gf_{2^m}^\ast$. Otherwise, we would have $1+c^2+d=0$, raising both sides to the $2^k$-th power, we get $1+d^2+c^2=0$.  It follows that $d=d^2$, i.e., $d=0$ or $1$ and $c=0$ or $1$, which is impossible.  Since $1+c^2+d \neq 0, \,\, cd\in \gf_{2^m}^\ast$, we know that Eq.(\ref{eq2-7}) has precisely one solution in  $\gf_{2^m}^\ast$. Then $f(x)=c$ has precisely one solution in  $\gf_{2^m}^\ast$ for any $c\in\gf_{2^m}^\ast$. Therefore $f(x)$ is a permutation polynomial over $\gf_{2^m}^\ast$. This completes the proof.\end{proof}

The following two results are needed in Section 3.
\begin{lemma}\label{lelucas} {\rm (Lucas formula)} Let  $p$ be a prime, and
$$m=\sum_{i=0}^lm_ip^i \quad \mbox{and}\quad k=\sum_{i=0}^lk_ip^i$$
be representations of $m$ and $k$ to the basis $p$, that is, $0\le m_i, \,\, k_i<p$. Then
$${m\choose k}=\prod_{i=0}^l {m_i\choose k_i} \pmod{p}.$$\end{lemma}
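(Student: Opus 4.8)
The plan is to deduce the congruence from a single polynomial identity in $\gf_p[x]$, by comparing the coefficient of $x^k$ on both sides. First I would record the elementary fact that ${p\choose j}\equiv 0\pmod p$ for $0<j<p$: since ${p\choose j}=\frac{p!}{j!\,(p-j)!}$ is an integer, $p$ divides the numerator $p!$ but divides neither $j!$ nor $(p-j)!$, hence $p\mid{p\choose j}$. Consequently $(1+x)^p=1+x^p$ in $\gf_p[x]$, and applying this identity $i$ times (equivalently, applying the Frobenius endomorphism) gives
$$(1+x)^{p^i}=1+x^{p^i}\qquad\text{in }\gf_p[x]\text{, for every }i\ge 0.$$

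Next, using the base-$p$ expansion $m=\sum_{i=0}^l m_i p^i$ with $0\le m_i<p$, I would compute in $\gf_p[x]$:
$$(1+x)^m=\prod_{i=0}^l\left((1+x)^{p^i}\right)^{m_i}=\prod_{i=0}^l\left(1+x^{p^i}\right)^{m_i}=\prod_{i=0}^l\left(\sum_{j=0}^{m_i}{m_i\choose j}x^{jp^i}\right).$$
Expanding the last product, a typical term has the form $\left(\prod_{i=0}^l{m_i\choose j_i}\right)x^{\,\sum_{i=0}^l j_i p^i}$ with each $0\le j_i<p$. Since all the digits $j_i$ lie in $\{0,1,\dots,p-1\}$, the exponent $\sum_{i=0}^l j_i p^i$ is exactly the integer whose base-$p$ representation is $(j_0,j_1,\dots,j_l)$; hence distinct tuples give distinct exponents, and the unique tuple producing the exponent $k=\sum_{i=0}^l k_i p^i$ is $(k_0,k_1,\dots,k_l)$ itself. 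Therefore the coefficient of $x^k$ on the right-hand side equals $\prod_{i=0}^l{m_i\choose k_i}$, with the usual convention that ${m_i\choose k_i}=0$ when $k_i>m_i$.

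Finally, since the coefficient of $x^k$ in $(1+x)^m\in\gf_p[x]$ is $\binom{m}{k}\bmod p$, equating the two expressions for this coefficient yields ${m\choose k}\equiv\prod_{i=0}^l{m_i\choose k_i}\pmod p$, which is the claim. The only mildly delicate point is the uniqueness of the base-$p$ representation, which is what lets us isolate the coefficient of $x^k$ cleanly; the rest is the ``freshman's dream'' identity $(1+x)^{p^i}=1+x^{p^i}$ together with a routine multinomial expansion. (If one prefers to avoid worrying about the range $k>m$ separately, note that in that case at least one $k_i>m_i$, so the product is $0$, consistent with ${m\choose k}=0$.)
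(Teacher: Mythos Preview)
Your proof is correct and is in fact the standard textbook argument for Lucas' theorem via the Frobenius identity $(1+x)^{p^i}=1+x^{p^i}$ in $\gf_p[x]$. The paper itself does not supply a proof of this lemma; it is simply stated as the classical Lucas formula and used as a black box (the authors only need the consequence recorded in the subsequent remark, that ${m\choose k}\not\equiv 0\pmod p$ iff every digit $k_i\le m_i$). So there is nothing to compare against: your argument is complete, and it fills in a detail the paper intentionally omits.
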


\begin{remark}\label{relucas}From the above Lucas formula, we can easily derive that ${m\choose k}\not\equiv0\pmod{p}$ if and only if $k_i\le m_i$ for all $i=0, 1, \ldots, l$. Hence there are precisely $\prod_{i=0}^l(m_i+1)$ integers $k$ such that $0\le k\le m$ and ${m\choose k}\not\equiv0\pmod{p}$.\end{remark}

\section{Zieve's permutation trinomials over $\gf_{2^m}$}

In this section, we determine all permutation trinomials over $\gf_{2^m}$ in Zieve's paper \cite{Zi13}.

\begin{lemma}\label{leZi1}{\rm(see, \cite{Zi13} Lemma 2.1)} Let  $q$ be a prime power, and let $l(x)\in\bar{\mathbb{F}}_q(x)$ be a degree-
one rational function. Then $l$ induces a bijection on $\mu_{q+1}$ if and only
if $l(x)$ equals either

(i)  $\beta/x$ with $\beta\in\mu_{q+1}$, or

(ii) $ (x -\gamma^q\beta)/(\gamma x-\beta)$  with  $\beta\in\mu_{q+1}$ and $\gamma\in\gf_{q^2}\setminus \mu_{q+1}$.\end{lemma}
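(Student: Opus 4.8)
The plan is to reduce the statement to a concrete computation with the conjugation $x \mapsto \bar x = x^q$, which fixes $\mu_{q+1}$ pointwise-as-a-set (indeed $\mu_{q+1} = \{x \in \gf_{q^2}^\ast : x\bar x = 1\}$, i.e.\ $x^{-1} = \bar x$ on $\mu_{q+1}$). First I would note the easy direction: for a function of type (i), $l(x) = \beta/x$ with $\beta \in \mu_{q+1}$, and for $x \in \mu_{q+1}$ we have $l(x) = \beta x^{-1} = \beta \bar x$, and since conjugation permutes $\mu_{q+1}$ and multiplication by $\beta \in \mu_{q+1}$ preserves it, $l$ restricts to a bijection on $\mu_{q+1}$. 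For type (ii), $l(x) = (x - \gamma^q\beta)/(\gamma x - \beta)$; I would check directly that $l(x)\overline{l(x)} = 1$ for $x \in \mu_{q+1}$, using $\bar x = x^{-1}$, $\bar\beta = \beta^{-1}$, and $\overline{\gamma^q} = \gamma^{q^2} = \gamma$, so that the numerator of $\overline{l(x)}$ is $\bar x - \gamma\beta^{-1} = x^{-1}(1 - \gamma\beta^{-1}x) = x^{-1}\beta^{-1}(\beta - \gamma x)$ and the denominator of $\overline{l(x)}$ is $\gamma^q \bar x - \beta^{-1} = x^{-1}\beta^{-1}(\gamma^q\beta - x)$; multiplying $l(x)$ by this $\overline{l(x)}$ collapses to $1$. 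One also must verify $l$ is not constant, which is exactly the hypothesis $\gamma \notin \mu_{q+1}$ (the numerator and denominator are not proportional precisely when $\beta - \gamma\cdot\gamma^q\beta = \beta(1 - \gamma^{q+1}) \ne 0$). This shows $l$ maps $\mu_{q+1}$ into itself, and being a degree-one rational map it is injective where defined; a short argument that the pole does not lie on $\mu_{q+1}$ (again using $\gamma \notin \mu_{q+1}$) plus a counting/finiteness argument gives the bijection.

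For the converse, suppose $l(x) = (ax+b)/(cx+d)$ with $ad - bc \ne 0$ induces a bijection on $\mu_{q+1}$. The idea is to exploit that the group of degree-one rational functions is $\mathrm{PGL}_2(\bar\gf_q)$ and that the condition ``preserves $\mu_{q+1}$'' is equivalent to a conjugation-compatibility identity. Concretely, $x \in \mu_{q+1} \iff x^q = x^{-1}$, so $l$ preserves $\mu_{q+1}$ iff $l(x)^q = l(x)^{-1}$ whenever $x^q = x^{-1}$, i.e.\ iff the two rational functions $l(1/x)^q$ (with coefficients conjugated) and $1/l(x)$ agree as rational functions on the curve $x^q = x^{-1}$ — and since $\mu_{q+1}$ has $q+1 > 2$ points, this forces an \emph{identity} of degree-one rational functions: $\overline{l}(x^{-1}) = l(x)^{-1}$, where $\overline{l}$ has all coefficients raised to the $q$-th power. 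Writing this out, $(\bar a x^{-1} + \bar b)/(\bar c x^{-1} + \bar d) = (cx+d)/(ax+b)$, i.e.\ $(\bar a + \bar b x)/(\bar c + \bar d x) = (cx+d)/(ax+b)$. Cross-multiplying and comparing coefficients of $x^2$, $x$, $1$ yields $\bar a \cdot a = \bar d \cdot ? \ldots$ — more cleanly, it forces the matrix $\begin{pmatrix} a & b \\ c & d\end{pmatrix}$ to be a scalar multiple of $\begin{pmatrix} \bar d & \bar c \\ \bar b & \bar a\end{pmatrix}$. So there is $\lambda \in \bar\gf_q^\ast$ with $a = \lambda \bar d$, $b = \lambda \bar c$, $c = \lambda \bar b$, $d = \lambda \bar a$; applying conjugation to the first relation gives $\bar a = \bar\lambda d = \bar\lambda \lambda \bar a$, so $\lambda \bar\lambda = \lambda^{q+1} = 1$, i.e.\ $\lambda \in \mu_{q+1}$.

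From the relations $a = \lambda\bar d$, $b = \lambda\bar c$, $c = \lambda\bar b$, $d = \lambda\bar a$ I would now split into cases according to whether $c = 0$. If $c = 0$: then $b = \lambda\bar c = 0$ as well, so $l(x) = (a/d)x$; but a scaling $x \mapsto \mu x$ preserves $\mu_{q+1}$ iff $\mu \in \mu_{q+1}$, and then $l(x) = \mu x$. Hmm — this is an apparent type missing from the statement, but note $\mu x = \beta/x' $ is \emph{not} of the listed form in general, so I must double-check: in characteristic $2$ with the conventions of this paper a scaling may be absorbable, or more likely the case $c=0$ forces (after also using $ad \ne bc$) a contradiction unless it degenerates to type (i) — I would recheck that $l(x)=\beta/x$ corresponds to $a=0$, $d=0$, $b=\beta$, $c=1$ which indeed satisfies the relations with $\lambda = \beta^{-1}\cdot\overline{1} $, consistent. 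If $c \ne 0$: normalize $c = 1$ (divide numerator and denominator), so $1 = \lambda\bar b$, giving $b = \bar\lambda^{-1} = \lambda$ (using $\lambda^{q+1}=1$, so $\bar\lambda = \lambda^{-1}$), hence $b = \lambda \in \mu_{q+1}$; also $d = \lambda\bar a$ and $a = \lambda\bar d = \lambda\overline{\lambda\bar a} = \lambda\bar\lambda a = a$, consistent. Set $\gamma := $ (the value making $d = -\gamma^q\cdot(\text{something})$ match the displayed form) — explicitly, comparing $l(x) = (ax + \lambda)/(x + \lambda\bar a)$ with $(x - \gamma^q\beta)/(\gamma x - \beta)$ after clearing a scalar, one reads off $\beta$ and $\gamma$ in terms of $a, \lambda$, and the non-degeneracy $ad \ne bc$ translates exactly into $\gamma^{q+1} \ne 1$, i.e.\ $\gamma \notin \mu_{q+1}$. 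The main obstacle I anticipate is precisely this bookkeeping: making the correspondence between the $\mathrm{PGL}_2$ coefficients and the $(\beta,\gamma)$ parametrization clean, handling the $c = 0$ degenerate case correctly (ensuring it really does fall under (i) and is not a genuine omission), and being careful with characteristic $2$ signs throughout. The conceptual content — that $l$ preserves $\mu_{q+1}$ iff it is compatible with conjugation-inversion, forcing the coefficient matrix to be (a $\mu_{q+1}$-multiple of) its ``conjugate-adjugate'' — is the heart of the argument and is the step I would write out most carefully.
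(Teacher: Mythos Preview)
The paper does not prove this lemma at all; it is stated with the attribution ``(see, \cite{Zi13} Lemma 2.1)'' and simply imported from Zieve's paper, so there is no in-paper proof to compare your proposal against.

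On the substance of your sketch: the strategy is the standard and correct one, and the forward direction is fine. In the converse, your confusion about the case $c=0$ is easily dispelled once you notice that $\gamma=0$ is permitted in form~(ii) (indeed $0\in\gf_{q^2}\setminus\mu_{q+1}$), and then $(x-\gamma^q\beta)/(\gamma x-\beta)$ becomes $-x/\beta$, which is exactly the scaling $l(x)=\mu x$ with $\mu=-\beta^{-1}\in\mu_{q+1}$. So the ``missing type'' you worried about is already subsumed by~(ii). The clean case split is therefore: if $a=0$ then the relations force $d=0$ and you land in type~(i); if $a\ne 0$ you can normalise and solve for $\beta,\gamma$ to land in type~(ii), with the non-degeneracy $ad-bc\ne 0$ translating to $\gamma^{q+1}\ne 1$. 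Your derivation of the proportionality between $\begin{pmatrix}a&b\\c&d\end{pmatrix}$ and its ``conjugate--adjugate'' is the right idea, though the coefficient matching you wrote needs a little tidying (the identity $(\bar a+\bar b x)/(\bar c+\bar d x)=(cx+d)/(ax+b)$ in $\mathrm{PGL}_2$ gives the proportionality directly). None of this is a genuine gap, just the bookkeeping you yourself flagged.
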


\begin{lemma}\label{leZi2}{\rm(see,  \cite{Zi13} Lemma 3.1)} Let  $q$ be a prime power, and let $l(x)\in\bar{\mathbb{F}}_q(x)$ be a degree-
one rational function. Then $l$ induces a bijection from $\mu_{q+1}$ to $\gf_{q}\cup\{\infty\} $ if and only if $l(x) = (\delta x -\beta\delta^q)/( x-\beta)$  with  $\beta\in\mu_{q+1}$ and $\delta\in\gf_{q^2}\backslash \gf_{q}$.\end{lemma}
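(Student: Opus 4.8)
The plan is to reduce the claim to the already-established classification in Lemma \ref{leZi1}. A degree-one rational function $l(x)=(ax+b)/(cx+d)$ with $ad-bc\neq 0$ sends $\mu_{q+1}$ to $\gf_q\cup\{\infty\}$ bijectively precisely when composing $l$ with a fixed degree-one bijection $\gf_q\cup\{\infty\}\to\mu_{q+1}$ yields a degree-one self-map of $\mu_{q+1}$, to which Lemma \ref{leZi1} applies. So the first step is to exhibit one explicit degree-one map $\phi$ carrying $\mu_{q+1}$ onto $\gf_q\cup\{\infty\}$; the natural candidate is a Cayley-type transform. Concretely, fix any $\delta_0\in\gf_{q^2}\setminus\gf_q$ and check that $\psi(y)=(y-\delta_0^q)/(y-\delta_0)$ maps $\gf_q\cup\{\infty\}$ into $\mu_{q+1}$: for $y\in\gf_q$ one has $\overline{\psi(y)}=(y-\delta_0)/(y-\delta_0^q)=\psi(y)^{-1}$ since $\bar y=y$, hence $\psi(y)\overline{\psi(y)}=1$, i.e. $\psi(y)\in\mu_{q+1}$, and $\psi(\infty)=1\in\mu_{q+1}$; injectivity of $\psi$ and a counting argument ($|\gf_q\cup\{\infty\}|=q+1=|\mu_{q+1}|$) give that $\psi$ is a bijection. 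Its inverse $\phi=\psi^{-1}$ is again degree-one and maps $\mu_{q+1}$ bijectively onto $\gf_q\cup\{\infty\}$.

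Next, given an arbitrary degree-one $l$, the composite $g:=\phi^{-1}\circ l=\psi\circ l$ is a degree-one rational function, and $l$ maps $\mu_{q+1}$ bijectively onto $\gf_q\cup\{\infty\}$ if and only if $g$ maps $\mu_{q+1}$ bijectively onto itself. By Lemma \ref{leZi1}, the latter holds if and only if $g$ is of type (i) or (ii) there. Therefore $l=\psi^{-1}\circ g$ runs exactly over the set $\{\psi^{-1}\circ g : g \text{ of type (i) or (ii)}\}$, and the final step is to compute this set of composites explicitly and show it coincides with $\{(\delta x-\beta\delta^q)/(x-\beta):\beta\in\mu_{q+1},\ \delta\in\gf_{q^2}\setminus\gf_q\}$. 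Conversely, for any such $\delta$ and $\beta$ one checks directly that $l_{\delta,\beta}(x)=(\delta x-\beta\delta^q)/(x-\beta)$ does the job: for $x\in\mu_{q+1}$, $\bar x=x^{-1}=1/x$, so $\overline{l_{\delta,\beta}(x)}=(\delta^q/x-\beta^q\delta)/(1/x-\beta^q)=(\delta^q-\beta^q\delta x)/(1-\beta^q x)$, and using $\beta^q=\beta^{-1}$ (as $\beta\in\mu_{q+1}$, noting $q+1$ is what matters here — more precisely $\bar\beta=\beta^{-1}$) one simplifies this to $l_{\delta,\beta}(x)$ itself, showing $l_{\delta,\beta}(x)\in\gf_q\cup\{\infty\}$; injectivity on $\mu_{q+1}$ is automatic from $\deg l_{\delta,\beta}=1$ together with the nondegeneracy condition $\delta\neq\delta^q$ (which is exactly $\delta\notin\gf_q$), and a cardinality count upgrades injectivity to a bijection onto $\gf_q\cup\{\infty\}$.

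I expect the main obstacle to be the explicit composition bookkeeping in the forward direction: one must carry the parameters $\beta\in\mu_{q+1}$, $\gamma\in\gf_{q^2}\setminus\mu_{q+1}$ from Lemma \ref{leZi1} through the conjugation by $\psi$ and verify that the resulting coefficients repackage cleanly into the single normal form $(\delta x-\beta'\delta^q)/(x-\beta')$ with $\beta'\in\mu_{q+1}$ and $\delta\in\gf_{q^2}\setminus\gf_q$ — in particular that the two apparently different cases (i) and (ii) merge into one family, and that the constraint "$\gamma\notin\mu_{q+1}$" translates exactly into "$\delta\notin\gf_q$." A cleaner route that sidesteps heavy algebra is to argue abstractly: the Möbius group $\mathrm{PGL}_2(\gf_{q^2})$ acts on $\mathbb{P}^1(\gf_{q^2})$, the maps preserving $\mu_{q+1}$ form a subgroup conjugate (via $\psi$) to $\mathrm{PGL}_2(\gf_q)$ acting on $\gf_q\cup\{\infty\}$, and since $\mathrm{PGL}_2(\gf_q)$ is sharply $3$-transitive the bijections $\mu_{q+1}\to\gf_q\cup\{\infty\}$ form a single coset of that subgroup; one then only needs to verify that each $l_{\delta,\beta}$ lies in this coset and that the parametrization by $(\delta,\beta)$ is exhaustive, which reduces to a dimension/counting check. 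Either way, the verification that $l_{\delta,\beta}$ actually maps into $\gf_q\cup\{\infty\}$ — the conjugate-symmetry computation sketched above — is routine once the substitution $\bar x=1/x$ on $\mu_{q+1}$ and $\bar\delta=\delta^q$ are used.
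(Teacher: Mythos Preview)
The paper does not supply its own proof of Lemma~\ref{leZi2}; it is merely quoted from \cite{Zi13}, Lemma~3.1, so there is nothing in the paper to compare your argument against.

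That said, your strategy is correct and is essentially the standard one: fix a Cayley-type bijection $\psi:\gf_q\cup\{\infty\}\to\mu_{q+1}$, reduce via $l\mapsto\psi\circ l$ to the self-bijections of $\mu_{q+1}$ classified in Lemma~\ref{leZi1}, and verify the converse directly by the conjugate-symmetry computation $\overline{l_{\delta,\beta}(x)}=l_{\delta,\beta}(x)$ on $\mu_{q+1}$ (which you carry out correctly). Your proposed counting shortcut for the forward direction also goes through: the degree-one bijections $\mu_{q+1}\to\gf_q\cup\{\infty\}$ form a single coset of the stabilizer of $\mu_{q+1}$, which is conjugate via $\psi$ to $\mathrm{PGL}_2(\gf_q)$ and hence has order $q(q-1)(q+1)$; the parameter set $(\gf_{q^2}\setminus\gf_q)\times\mu_{q+1}$ has the same cardinality, and the assignment $(\delta,\beta)\mapsto l_{\delta,\beta}$ is injective (the unique pole recovers $\beta$, since $\delta\neq\delta^q$ rules out cancellation, and then the residue or the value at $\infty$ recovers $\delta$). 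This closes the argument without the explicit case-by-case composition you flagged as the main obstacle.
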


The following result is a little different from Zieve's Theorem 1.1 of \cite{Zi13}. The proof is almost the same as in  Theorem 1.1 of \cite{Zi13}. 

\begin{theorem}\label{thZiev1} Let $q$ be a prime power, let $n$ and $k$ be
integers, and let $\beta, \gamma \in \gf_{q^2}$ with $\beta^{q+1} = 1$ and $\gamma^{q+1}\ne 1$. Then
\begin{equation}\label{eqZi1} f(x) = x^{n+k(q+1)}  \left( (\gamma x^{q-1}-\beta)^n -\gamma (x^{q-1}-\gamma^q \beta)^n \right) \end{equation}
permutes $\gf_{q^2}^\ast$ if and only if $\gcd(n+2k,\,\,q-1) = 1$ and $\gcd(n,\,\, q+1) = 1$.\end{theorem}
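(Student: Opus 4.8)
The plan is to reduce the permutation property of the rather intimidating polynomial $f$ to the criterion in Lemma~\ref{lezi} (really Remark~\ref{rezi}), applied with $s = q+1$, $r = n + k(q+1)$, and an appropriate auxiliary polynomial $h$. First I would rewrite $f(x) = x^{n+k(q+1)}\bigl((\gamma x^{q-1}-\beta)^n - \gamma(x^{q-1}-\gamma^q\beta)^n\bigr)$ in the form $x^r h(x^{q-1})$ by setting $h(x) := (\gamma x - \beta)^n - \gamma(x - \gamma^q\beta)^n$, noting $(q^2-1)/s = q-1$. By Remark~\ref{rezi}, $f$ permutes $\gf_{q^2}^\ast$ if and only if (1) $\gcd(r, q-1) = 1$, i.e. $\gcd(n + k(q+1), q-1) = \gcd(n + 2k, q-1) = 1$ (using $q + 1 \equiv 2 \pmod{q-1}$), and (2) the map $g(x) := x^r h(x)^{q-1}$ permutes $\mu_{q+1}$, the set of $(q+1)$-st roots of unity. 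So condition (1) immediately accounts for the $\gcd(n+2k, q-1) = 1$ clause, and the whole game is to show condition (2) is equivalent to $\gcd(n, q+1) = 1$.

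For condition (2), I would work on $\mu_{q+1}$, where $\bar{x} = x^q = x^{-1}$. On this set $h(x)^{q-1} = h(x)^q / h(x) = \overline{h(x)} / h(x)$. Here $\overline{h(x)} = (\gamma^q x^{-1} - \beta^q)^n - \gamma^q(x^{-1} - \gamma x^{-1}\beta^q \cdot x)^n$ — more carefully, using $\bar\beta = \beta^{-1}$ and conjugating termwise, $\overline{h(x)} = (\gamma^q \bar x - \bar\beta)^n - \gamma^q(\bar x - \gamma \bar\beta)^n$; substituting $\bar x = x^{-1}$ and clearing $x^{-n}$ from each term, one finds $\overline{h(x)} = x^{-n}\bigl((\gamma^q - \beta^{-1} x)^n - \gamma^q(1 - \gamma\beta^{-1} x)^n\bigr)$, and a short manipulation (pulling out constants and using $\beta^{q+1}=1$) should express this as a scalar multiple of $\bigl((x - \gamma\beta)^n - \gamma(\gamma^q x - \beta)^n\bigr)\cdot(\text{something})$. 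The point of all this algebra is to show that $g(x) = x^r h(x)^{q-1}$, restricted to $\mu_{q+1}$, can be factored as $x^n$ composed with (or multiplied by the $n$-th power of) a degree-one rational function $l(x)$ of the special shape appearing in Lemma~\ref{leZi1}(ii), namely $l(x) = (\gamma x - \beta)/(x - \gamma^q\beta)$ up to constants — indeed $h(x) = (x - \gamma^q\beta)^n\bigl(\gamma - l(x)^{-n}\cdot\dots\bigr)$, so that $x^r h(x)^{q-1}$ agrees on $\mu_{q+1}$ with $c \cdot \bigl(x \cdot l(x)^{?}\bigr)^n$ type expression. Then Lemma~\ref{leZi1} tells us $l$ is a bijection of $\mu_{q+1}$, and raising to the $n$-th power is a bijection of $\mu_{q+1}$ precisely when $\gcd(n, q+1) = 1$; combining, $g$ permutes $\mu_{q+1}$ iff $\gcd(n, q+1) = 1$.

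The precise bookkeeping in that last paragraph — getting the constants ($\gamma$, $\gamma^q$, powers of $\beta$) to line up, checking that the "something" factor is itself a nonzero scalar times a power of $x$ times a power of $l(x)$, and verifying that no zeros or poles of $h$ land on $\mu_{q+1}$ (so that $h(x)^{q-1}$ genuinely takes values in $\mu_{q+1}$ and the rational-function computation is valid) — is where I expect the real work to lie; the structure is exactly Zieve's Theorem~1.1, and the note preceding the statement says the proof is "almost the same," so the claim is just that one carefully re-runs that argument with the mild generalization allowing $n$ and $k$ to be arbitrary integers (not necessarily positive) and tracking the $k(q+1)$ shift, which by Proposition~\ref{pozi} only affects the exponent's gcd condition and not the behavior on $\mu_{q+1}$. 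I would therefore organize the writeup as: (a) massage $f$ into $x^r h(x^{q-1})$ form and invoke Remark~\ref{rezi}; (b) translate condition (1) into $\gcd(n+2k, q-1)=1$; (c) on $\mu_{q+1}$, compute $h(x)^{q-1}$ via conjugation and identify $x^r h(x)^{q-1}$ with the $n$-th power of $x$ times a degree-one $l(x)$ from Lemma~\ref{leZi1}(ii); (d) conclude via Lemma~\ref{leZi1} that condition (2) holds iff $\gcd(n, q+1)=1$; then multiply (b) and (d). The main obstacle is step (c), the algebraic identification of the reduced map on $\mu_{q+1}$ with $x^n \circ l(x)$.
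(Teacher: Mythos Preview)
Your approach is essentially Zieve's original proof of his Theorem~1.1, re-run from scratch; it is correct in outline, but the paper does something much quicker. The paper's entire argument is a one-step reduction \emph{to} Zieve's Theorem~1.1 rather than a reproduction of it: choose a positive integer $N\equiv n\pmod{q^2-1}$ large enough that $N+k(q+1)>0$; since $\gamma x^{q-1}-\beta$ and $x^{q-1}-\gamma^q\beta$ have no zeros on $\gf_{q^2}^\ast$ (because $x^{q-1}\in\mu_{q+1}$ there while $\gamma,\gamma^{-q}\notin\mu_{q+1}$), the map $f$ agrees on $\gf_{q^2}^\ast$ with the genuine polynomial $F$ obtained by replacing $n$ by $N$; now $F(0)=0$, so $f$ permutes $\gf_{q^2}^\ast$ iff $F$ permutes $\gf_{q^2}$, and that is exactly Zieve's theorem. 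The gcd conditions are preserved because $N\equiv n$ modulo $q^2-1$.

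This comparison also exposes a small gap in your plan: for negative $n$ your $h(x)=(\gamma x-\beta)^n-\gamma(x-\gamma^q\beta)^n$ is not a polynomial, so Remark~\ref{rezi} does not literally apply; you would have to make exactly the paper's substitution $n\to N$ anyway before invoking it. A second remark on your step~(c): when you compute $x^{r}h(x)^{q-1}$ on $\mu_{q+1}$ you will find not $l(x)^n$ but rather $c\cdot\phi\bigl(l(x)^n\bigr)$ for a scalar $c\in\mu_{q+1}$ and a further degree-one map $\phi(y)=(y-\gamma^q)/(1-\gamma y)$, which is again of the shape in Lemma~\ref{leZi1}(ii); so the composition is (scalar)$\circ\phi\circ(y\mapsto y^n)\circ l$ and the conclusion still follows. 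Your route is more self-contained and illuminates why the theorem is true; the paper's route is a two-line black-box reduction.
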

\begin{proof} \noindent
 (by Michael E. Zieve)  Let  $N$  be a positive integer with  $N\equiv n \pmod{q^2-1}$ and $N+k(q-1)>0$. Since $\beta^{q+1} = 1$ and $\gamma^{q+1}\ne 1$, then  $\gamma x^{q-1} - \beta$  and  $x^{q-1} - \gamma^q \beta$  have no roots in $\gf_{q^2}^*$, so that $f(x)$ induces the same function on $\gf_{q^2}^*$ as does
$$F(x):= x^{N+k(q+1)} \left( (\gamma x^{q-1}-\beta)^N -\gamma (x^{q-1}-\gamma^q \beta)^N \right),$$ here $F(x)$ is obtained from $f(x)$ by replacing $n$ by $N$. Observe that $F(x)$ is a polynomial from $\gf_{q^2}$ to $\gf_{q^2}$, and $f(\alpha)=F(\alpha), \alpha\in\gf_{q^2}^\ast$.
Since $F(0)=0$, it follows that  $f(x)$  permutes $F_{q^2}^*$  if and only if  $F(x)$  permutes  $F_{q^2}$,
which follows from  Theorem 1.1 of Zieve \cite{Zi13},  $\gcd(N+2k,q-1)=\gcd(n+2k, q-1)=1$ and $\gcd(N, q+1)=\gcd(n, q+1)=1$ since $ N\equiv n \pmod{q^2-1}$.

\end{proof}

\begin{remark} Since $\beta^{q+1}=1$, we have $\beta=\beta_1^{q-1}$ for some $\beta_1\in\gf_{q^2}$, so $f(x)$ ( when $n+k(q+1)>0$ and $n>0$) is QM equivalent to the polynomial
$$\frac{f(\beta_1x)}{\beta_1^{n(q-1)+n+k(q+1)}}=x^{n+k(q+1)}  \left( (\gamma x^{q-1}-1)^n -\gamma (x^{q-1}-\gamma^q )^n \right).$$\end{remark}

Similarly, using Lemma 3.2 and Remark 2.4, we have the following theorem, which  is only slightly different from Theorem 1.2 of Zieve \cite{Zi13}.

\begin{theorem}\label{thZiev2} Let $q$ be a prime power, let $n>0$ and $k$ be
integers, and let $\beta, \delta \in \gf_{q^2}$ with $\beta^{q+1} = 1$ and $\delta\not\in\gf_{q}$. Then
\begin{equation}\label{eqZi2} f(x) = x^{n+k(q+1)}  \left( (\delta x^{q-1}-\beta\delta^q)^n -\delta (x^{q-1}- \beta)^n \right) \end{equation}
permutes $\gf_{q^2}^\ast$ if and only if $\gcd(n(n+2k),\,\,q-1) = 1$.\end{theorem}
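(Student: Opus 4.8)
As in the proof of Theorem~\ref{thZiev1} I would first normalize the exponent, then apply Remark~\ref{rezi} with $s=q+1$ (so $(q^2-1)/s=q-1$) and use Lemma~\ref{leZi2} to control the induced action on $\mu_{q+1}$. For the normalization, replacing $k$ by $k+l(q-1)$ with $l$ a large integer adds $l(q^2-1)$ to the exponent $n+k(q+1)$, hence changes neither the function $f$ induces on $\gf_{q^2}^\ast$ (since $x^{q^2-1}=1$ there) nor the integer $\gcd\!\big(n(n+2k),\,q-1\big)$; so I may assume $k\ge 0$. Then
\[
f(x)=x^{n+2k}\,h\!\left(x^{q-1}\right),\qquad h(y):=y^{k}\big((\delta y-\beta\delta^q)^n-\delta(y-\beta)^n\big)\in\gf_{q^2}[y],
\]
with $r:=n+2k>0$. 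By Remark~\ref{rezi}, $f$ permutes $\gf_{q^2}^\ast$ if and only if (1) $\gcd(n+2k,\,q-1)=1$ and (2) the map $\alpha\mapsto\alpha^{n+2k}h(\alpha)^{q-1}$ permutes $\mu_{q+1}$. The point is to prove that (2) is equivalent to $\gcd(n,\,q-1)=1$; combined with (1) this says precisely that $\gcd\!\big(n(n+2k),\,q-1\big)=1$.

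To analyze (2), fix $\alpha\in\mu_{q+1}$, so $\alpha^q=\alpha^{-1}$, and recall $\beta^q=\beta^{-1}$ and $\delta^{q^2}=\delta$. Set $h_0(y)=(\delta y-\beta\delta^q)^n-\delta(y-\beta)^n$, so that $h(\alpha)=\alpha^k h_0(\alpha)$ and $\alpha^{n+2k}h(\alpha)^{q-1}=\alpha^{n+k(q+1)}h_0(\alpha)^{q-1}$. A direct computation, raising each linear factor of $h_0(\alpha)$ to the $q$-th power and pulling out the common factor, gives
\[
h_0(\alpha)^q=(-1)^n(\alpha\beta)^{-n}\big((\delta\alpha-\beta\delta^q)^n-\delta^q(\alpha-\beta)^n\big).
\]
Writing $A:=(\delta\alpha-\beta\delta^q)^n$ and $B:=(\alpha-\beta)^n$, and using $h_0(\alpha)^{q-1}=h_0(\alpha)^q/h_0(\alpha)$ together with $\alpha^{k(q+1)}=1$, this becomes
\[
\alpha^{n+2k}h(\alpha)^{q-1}=c\cdot\frac{A-\delta^q B}{A-\delta B},\qquad c:=(-1)^n\beta^{-n}\in\mu_{q+1}.
\]
Here $h_0(\alpha)=A-\delta B\ne 0$: for $\alpha\ne\beta$, a zero would force $l(\alpha)^n=\delta$ with $l(x):=(\delta x-\beta\delta^q)/(x-\beta)$, which is impossible since $l(\alpha)\in\gf_q\cup\{\infty\}$ by Lemma~\ref{leZi2} while $\delta\notin\gf_q$; and for $\alpha=\beta$ one has $B=0$ and $A=\beta^n(\delta-\delta^q)^n\ne 0$. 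Dividing numerator and denominator by $B$ when $\alpha\ne\beta$ identifies the quotient as $\Phi(l(\alpha)^n)$, where $\Phi(t):=(t-\delta^q)/(t-\delta)$, and at $\alpha=\beta$ the quotient is $1=\Phi(\infty)$ with $l(\beta)=\infty$. Hence the map in (2) equals $\alpha\mapsto c\,\Phi\!\big(l(\alpha)^n\big)$ on all of $\mu_{q+1}$.

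It then remains to decompose $\alpha\mapsto c\,\Phi\!\big(l(\alpha)^n\big)$. By Lemma~\ref{leZi2}, $l$ is a bijection from $\mu_{q+1}$ onto $\gf_q\cup\{\infty\}$. The power map $t\mapsto t^n$ sends $\gf_q\cup\{\infty\}$ into itself and is a bijection there precisely when $\gcd(n,\,q-1)=1$. Since $\delta\notin\gf_q$ we have $\delta\ne\delta^q$, so $\Phi$ is a degree-one rational function, hence injective on $\bar{\gf}_q\cup\{\infty\}$; a one-line check gives $\Phi(t)^{q+1}=1$ for every $t\in\gf_q\cup\{\infty\}$, so $\Phi$ maps the $(q+1)$-element set $\gf_q\cup\{\infty\}$ bijectively onto $\mu_{q+1}$; and multiplication by $c\in\mu_{q+1}$ permutes $\mu_{q+1}$. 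Composing these four maps, the map in (2) permutes $\mu_{q+1}$ if and only if $\gcd(n,\,q-1)=1$, and the theorem follows from Remark~\ref{rezi}.

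I expect the crux to be the middle paragraph: performing the $q$-th power simplification of $h_0(\alpha)^{q-1}$ cleanly (watching signs and exponents), recognizing the rational function $l$ of Lemma~\ref{leZi2} inside the outcome, isolating the constant $c\in\mu_{q+1}$, and handling the case $\alpha=\beta$ together with the non-vanishing of $h_0$ on $\mu_{q+1}$, so that the expression is a bona fide self-map of $\mu_{q+1}$. Once it is written as $\alpha\mapsto c\,\Phi(l(\alpha)^n)$, the bijectivity reduces immediately, via Lemma~\ref{leZi2}, to that of $t\mapsto t^n$ on $\gf_q\cup\{\infty\}$, i.e.\ to $\gcd(n,q-1)=1$.
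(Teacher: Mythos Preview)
Your argument is correct. The computation of $h_0(\alpha)^q$, the identification of the induced map on $\mu_{q+1}$ as $\alpha\mapsto c\,\Phi(l(\alpha)^n)$, the non-vanishing check, and the decomposition into four maps (three of which are always bijections) all go through cleanly; the equivalence with $\gcd(n,q-1)=1$ and hence with $\gcd(n(n+2k),q-1)=1$ follows.

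However, your route differs substantially from the paper's. The paper gives essentially a two-line proof: it normalizes $k$ to a positive $K\equiv k\pmod{q^2-1}$ so that $F(x)$ is a genuine polynomial with $F(0)=0$, and then invokes Theorem~1.2 of Zieve~\cite{Zi13} as a black box. What you have written is, in effect, a self-contained reconstruction of Zieve's proof of that theorem, using only the ingredients already stated in the present paper (Remark~\ref{rezi} and Lemma~\ref{leZi2}). The advantage of your approach is that it is independent of the external reference and makes transparent exactly where each gcd condition enters: $\gcd(n+2k,q-1)=1$ from condition~(1) of Remark~\ref{rezi}, and $\gcd(n,q-1)=1$ from the bijectivity of the $n$-th power map on $\gf_q\cup\{\infty\}$. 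The paper's approach is terser precisely because the heavy lifting has already been done in~\cite{Zi13}; your proof and Zieve's original are essentially the same argument.
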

\begin{proof}(By Michael E. Zieve) Let  $K$  be a positive integer with  $K\equiv k \pmod{q^2-1}$.  Then  $f(x)$  induces the same function on  $F_{q^2}^*$  as does
$$F(x) := x^{n+K(q+1)} \left( (\delta x^{q-1}-\beta\delta^q)^n -\delta (x^{q-1}- \beta)^n \right) ,$$
here $F(x)$ is obtained from $f(x)$ by replacing $k$ by $K$. Since $n>0$, then $f(\alpha)=F(\alpha), \alpha\in\gf_{q^2}^\ast$.
Since $F(0)=0$, it follows that  $f(x)$  permutes $F_{q^2}^*$  if and only if  $F(x)$  permutes  $F_{q^2}$,
which follows directly from  Theorem 1.2 of Zieve \cite{Zi13} and $\gcd(n+2K,q-1)=\gcd(n+2k, q-1)=1$ since $ K\equiv k \pmod{q^2-1}$.\end{proof}

Similarly, $f(x)$ (when $n+k(q+1)>0$) is QM equivalent to the polynomial
$$\frac{f(\beta_1x)}{\beta_1^{n(q-1)+n+k(q+1)}}=x^{n+k(q+1)}  \left( (\delta x^{q-1}-\delta^q)^n -\delta (x^{q-1}-1 )^n \right).$$

%\begin{theorem}\label{thZiev3} Let $q$ be a prime power, let $n$ and $k$ be
%integers with $n+k(q+1)>0$, and let $\beta, \gamma \in \gf_{q^2}$ satisfy $\beta^{q+1} = 1$ and $\gamma^{q+1}\ne 1$. Then
%\begin{equation}\label{eqZi3} f(x) = x^{k(q+1)+n}  \left( (\gamma -\beta x^{q-1})^n -\gamma (1-\gamma^q \beta x^{q-1})^n \right) \end{equation}
%permutes $\gf_{q^2}$ if and only if $\gcd(2k+n,\,\,q-1) = 1$ and $\gcd(n,\,\, q+1) = 1$.\end{theorem}
%\begin{proof}Since
%$$f_1(x)=x^{(n+2k)(q+1)}f(1/x)$$
%when $x\ne0$ and $f_1(0)=0$, so $f_1(x)$ permutes $\gf_{q^2}$ if and only if $\gcd(n+2k,\,\,q-1) = 1$ and $\gcd(n,\,\, q+1) = 1$.\end{proof}

%\begin{theorem}\label{thZiev4} Let $q$ be a prime power, let $n$ and $k$ be
%integers with $n+k(q+1)>0$, and let $\beta, \gamma \in \gf_{q^2}$ satisfy $\beta^{q+1} = 1$ and $\delta\not\in\gf_{q}$. Then
%\begin{equation}\label{eqZi4} f(x) = x^{k(q+1)+n}  \left( (\delta-\beta\delta^q x^{q-1})^n -\delta (1- \beta x^{q-1})^n \right) \end{equation}
%permutes $\gf_{q^2}$ if and only if $\gcd(n(2k+n),\,\,q-1) = 1$.\end{theorem}

\begin{remark} In Theorems 3.3 and 3.5, both $n$ and $k$ can be taken to be negative integers. Since $\alpha^n=\alpha^{q^2-1+n}, \, \alpha\in\gf_{q^2}^\ast$, without loss of generality, we can always assume that $n>0$. Hence we assume that $n>0$ throughout the paper.\end{remark}

\subsection{ The first case}
Let $p$ be a prime,  $n=\sum_{i=1}^ln_ip^i, \,\, 0\le n_i<p$
be the  representation of $n$ to the basis $p$. By Remark \ref{relucas}, there are precisely $\prod_{i=0}^l(n_i+1)$ integers $k$ with $0\le k\le n$ and ${ n \choose k}\not\equiv0\pmod{p}$. For an integer $k$ with $0\le k\le n$,  the coefficients of $x^{(n-k)(q-1)}$ and $x^{k(q-1)}$ in the polynomial $(\gamma x^{q-1}-\beta)^n -\gamma (x^{q-1}-\gamma^q \beta)^n$ are
$$(-1)^k\left(\gamma^{n-k}-\gamma^{1+kq}\right){n\choose k}\beta^k \quad \mbox{and}\quad (-1)^{n-k}\left(\gamma^{k}-\gamma^{1+(n-k)q}\right){n\choose k}\beta^{n-k}. $$

If ${ n \choose k}\not\equiv0\pmod{p}$ and both the coefficients of $x^{(n-k)(q-1)}$ and $x^{k(q-1)}$ in the polynomial $(\gamma x^{q-1}-\beta)^n -\gamma (x^{q-1}-\gamma^q \beta)^n$ are 0, then we have $\gamma^{n-k}=\gamma^{1+kq}$. Hence
$$\gamma^{k}-\gamma^{1+(n-k)q}=\gamma^{k}-\gamma \gamma^{(1+kq)q}=\gamma^{k}-\gamma^{k+1+q},$$
here we have used the fact that  $\gamma^{q^2}=\gamma$, which is 0 if and only if $\gamma^{q+1}=1$, a contradiction to the assumption of Theorem \ref{thZiev1}. Therefore we have proved that at least one of the coefficients of $x^{(n-k)(q-1)}$ and $x^{k(q-1)}$ in the polynomial $(\gamma x^{q-1}-\beta)^n -\gamma (x^{q-1}-\gamma^q \beta)^n$ is not 0 when ${ n \choose k}\not\equiv0\pmod{p}$. Consequently, the polynomial $(\gamma x^{q-1}-\beta)^n -\gamma (x^{q-1}-\gamma^q \beta)^n$ is a trinomial polynomial of $x$ only when $\prod_{i=0}^l(n_i+1)\le 6$,  it follows that $n=2^s+2^t, \,\, 0\le s<t$ for $p=2$.

Let $p=2$, $n=2^s+2^t, \,\, 0\le s<t$. Notice that
$$(\gamma x^{q-1}-1)^n -\gamma (x^{q-1}-\gamma^q )^n=(\gamma x^{q-1}+1)^n +\gamma (x^{q-1}+\gamma^q )^n $$
$$=(\gamma^n+\gamma)x^{n(q-1)}+(\gamma^{2^t}+\gamma^{1+2^sq}) x^{2^t(q-1)}+(\gamma^{2^s}+\gamma^{1+2^tq})x^{2^s(q-1)}+(1+\gamma^{1+nq}).$$
We consider the following four cases.

{\bf Case 1:} $\gamma^n+\gamma=0$. Then $\gamma^{2^t}=\gamma^{1-2^s}$, $\gamma^{2^s}=\gamma^{1-2^t}$ and
$$\gamma^{2^t}+\gamma^{1+2^sq}=\gamma^{2^t}\left(1+\gamma^{1-2^t+2^sq}\right)= \gamma^{2^t}\left(1+\gamma^{2^s(q+1)}\right), $$

$$\gamma^{2^s}+\gamma^{1+2^tq}=\gamma^{2^s}\left(1+\gamma^{2^t(q+1)}\right), \quad 1+\gamma^{1+nq}=1+\gamma^{q+1}.$$
In this case, we have
$$(\gamma x^{q-1}+1)^n +\gamma (x^{q-1}+\gamma^q )^n $$
$$=\gamma^{2^t}\left(1+\gamma^{2^s(q+1)}\right)x^{2^t(q-1)}+\gamma^{2^s}\left(1+\gamma^{2^t(q+1)}\right)x^{2^s(q-1)}+(1+\gamma^{q+1}).$$

Now we take $n=2^s+2^t,$ where $2\not|st$ and $m$ is even with $\gcd(n,\,\, 2^m+1)=1$. Take $\gamma=\omega$ to be a primitive cubic root of unity, then $\gamma^{2^t}\left(1+\gamma^{2^s(q+1)}\right)=\gamma^{2^s}\left(1+\gamma^{2^t(q+1)}\right)=1+\gamma^{q+1}=\omega$. Hence we have the following result.

\begin{corollary}Let $q=2^{m},\,\, 2|m $  and let $n=2^s+2^t, \,\, 0\le s< t, \,\,2\not|st$. Then
$$ f(x) = x^{n+k(q+1)}  \left(  x^{2^t(q-1)}+ x^{2^s(q-1)}+1 \right)  $$
permutes $\gf_{q^2}^\ast$ if and only if $\gcd(n+2k,\,\,q-1) = 1$ and $\gcd(n, \,\, 2^m+1)=1$. In particular,
$$g(x)=x^{10+k(q+1)}  \left(  x^{8(q-1)}+ x^{2(q-1)}+1 \right)$$
permutes $\gf_{q^2}^\ast$ if and only if $\gcd(10+2k,\,\,q-1) = 1$ and $4|m$.\end{corollary}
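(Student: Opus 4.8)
The plan is to read the corollary off Theorem~\ref{thZiev1} by making the specific choices $\beta=1$ and $\gamma=\omega$, where $\omega$ is a primitive cubic root of unity, and using the coefficient computation carried out immediately before the statement. First I would check that these choices satisfy the hypotheses of Theorem~\ref{thZiev1}: $\beta^{q+1}=1$ is trivial, and since $2\mid m$ we have $3\mid 2^m-1$, so $\omega\in\gf_q^\ast$; hence $\omega^{q-1}=1$, so $\omega^q=\omega$ and $\gamma^{q+1}=\omega^{q+1}=\omega^2\neq 1$, exactly the condition $\gamma^{q+1}\neq 1$. Note this is the first (essential) use of $2\mid m$: it is what puts $\omega$ in $\gf_q$ and also forces $q\equiv 1\pmod 3$.

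Next I would confirm that the hypothesis $2\nmid st$ is precisely what places us in ``Case~1'', i.e. $\gamma^n+\gamma=0$. Since $2\equiv -1\pmod 3$ and $2\nmid st$ means $s$ and $t$ are both odd, we get $2^s\equiv 2^t\equiv 2\pmod 3$, hence $n=2^s+2^t\equiv 1\pmod 3$ and $\gamma^n=\omega=\gamma$. Then I would evaluate the three surviving coefficients $\gamma^{2^t}(1+\gamma^{2^s(q+1)})$, $\gamma^{2^s}(1+\gamma^{2^t(q+1)})$ and $1+\gamma^{q+1}$ from the displayed formula: using $q\equiv 1\pmod 3$, $2^s\equiv 2^t\equiv 2\pmod 3$, $2^{s+1}\equiv 2^{t+1}\equiv 1\pmod 3$ (here $s,t$ odd gives $s+1,t+1$ even), and $1+\omega+\omega^2=0$, each of the three reduces to $\omega$. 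Therefore
$$(\gamma x^{q-1}-1)^n-\gamma(x^{q-1}-\gamma^q)^n=\omega\left(x^{2^t(q-1)}+x^{2^s(q-1)}+1\right),$$
so the polynomial of Theorem~\ref{thZiev1} with $\beta=1$, $\gamma=\omega$ is exactly $\omega\,x^{n+k(q+1)}\bigl(x^{2^t(q-1)}+x^{2^s(q-1)}+1\bigr)$.

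To finish the main statement I would note that scaling by the nonzero constant $\omega$ does not affect whether a map permutes $\gf_{q^2}^\ast$; hence $x^{n+k(q+1)}(x^{2^t(q-1)}+x^{2^s(q-1)}+1)$ permutes $\gf_{q^2}^\ast$ if and only if the polynomial in Theorem~\ref{thZiev1} does, which by that theorem holds precisely when $\gcd(n+2k,q-1)=1$ and $\gcd(n,q+1)=1$; and $q+1=2^m+1$. For the ``in particular'' part I would put $n=10=2^1+2^3$, so $s=1$, $t=3$ and $st=3$ is odd, and then compute $\gcd(10,2^m+1)$: as $2^m+1$ is odd this equals $\gcd(5,2^m+1)$, and $2^m\bmod 5$ cycles through $2,4,3,1$ for $m\equiv 1,2,3,0\pmod 4$, so $5\mid 2^m+1$ exactly when $m\equiv 2\pmod 4$; under the standing hypothesis $2\mid m$ this means $\gcd(10,2^m+1)=1$ iff $4\mid m$, giving the stated form.

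There is no genuine obstacle here, since Theorem~\ref{thZiev1} supplies all the structural content; the only step requiring care is the bookkeeping in the second paragraph, namely tracking the reductions modulo $3$ of $2^s$, $2^t$, $2^{s+1}$, $2^{t+1}$, $n$, and $q$ so as to see that all three trinomial coefficients genuinely coincide with the single constant $\omega$ (which is what produces the clean trinomial shape $x^{2^t(q-1)}+x^{2^s(q-1)}+1$ out of Zieve's quadrinomial expression).
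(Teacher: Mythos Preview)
Your proposal is correct and follows essentially the same route as the paper: choose $\beta=1$, $\gamma=\omega$ a primitive cubic root of unity, verify that with $2\mid m$ and $2\nmid st$ one lands in Case~1 ($\gamma^n+\gamma=0$) and that all three surviving coefficients equal $\omega$, then apply Theorem~\ref{thZiev1} and specialize to $n=10$. The only difference is that you spell out the mod~$3$ bookkeeping and the $\gcd(10,2^m+1)$ computation in full, whereas the paper simply asserts the coefficient values and the final equivalence.
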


{\bf Case 2:} $\gamma^{2^t}+\gamma^{1+2^sq}=0$. Then $\gamma^{2^t}=\gamma^{1+2^sq}$,  and
$$\gamma^{2^s}+\gamma^{1+2^tq}=\gamma^{2^s}+\gamma\gamma^{(1+2^sq)q}= \gamma^{2^s}\left(1+\gamma^{q+1}\right), $$
$$\gamma+\gamma^{n}=\gamma+\gamma^{2^s+2^t}=\gamma+\gamma\cdot\gamma^{2^s(1+q)}=\gamma\left(1+\gamma^{2^s(q+1)}\right),$$
$$1+\gamma^{1+nq}=1+\gamma^{1+(2^s+2^t)q}=1+\gamma^{2^tq}\cdot\gamma^{1+2^sq}=1+\gamma^{2^t(q+1)}.$$

In this case, we have
$$(\gamma x^{q-1}+1)^n +\gamma (x^{q-1}+\gamma^q)^n $$
$$=\gamma\left(1+\gamma^{2^s(q+1)}\right)x^{n(q-1)}+\gamma^{2^s}\left(1+\gamma^{(q+1)}\right)x^{2^s(q-1)}+\left(1+\gamma^{2^t(q+1)}\right).$$

If we take $n=2^s+2^t,\,\, 2|s, \,\,2\not|t$ and $m$ is even, then $\gcd(n,\,\, 2^m+1)=1$. Take $\gamma=\omega$ to be a primitive cubic root of unity, then $\gamma\left(1+\gamma^{2^s(q+1)}\right)=\gamma^{2^s}\left(1+\gamma^{q+1}\right)=1+\gamma^{2^t(q+1)}=\omega^2$. Hence, we obtain the following result.

\begin{corollary}Let $q=2^{m},\,\, 2|m$  and let $n=2^s+2^t, \,\, 0\le s< t, \,\,2|s, \,\,2\not|t$. Then
$$ f(x) = x^{n+k(q+1)}  \left(  x^{n(q-1)}+ x^{2^s(q-1)}+1 \right)  $$
permutes $\gf_{q^2}^\ast$ if and only if $\gcd(n+2k,\,\,q-1) = 1$. In particular,
$$g(x)=x^{3+k(q+1)}  \left(  x^{3(q-1)}+ x^{q-1}+1 \right)$$
permutes $\gf_{q^2}^\ast$ if and only if $\gcd(3+2k,\,\,q-1) = 1$.\end{corollary}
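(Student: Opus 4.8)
The approach is to obtain this as a direct specialization of Theorem~\ref{thZiev1}. I would take $\beta=1$ and $\gamma=\omega$, where $\omega$ is a primitive cube root of unity: since $2\mid m$ implies $3\mid q-1$, we have $\omega\in\gf_q\subseteq\gf_{q^2}$, so $\omega^q=\omega$ and $\omega^{q+1}=\omega^2\neq 1$, and hence the pair $(\beta,\gamma)=(1,\omega)$ is admissible in Theorem~\ref{thZiev1}.

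Next I would verify that this choice lies in the ``Case~2'' branch of the preceding discussion, namely that $\gamma^{2^t}+\gamma^{1+2^sq}=0$. Reducing the exponents of $\omega$ modulo $3$ via $2\equiv-1\pmod 3$ and using the parity hypotheses $2\mid s$, $2\nmid t$, one gets $\omega^{2^t}=\omega^2$ and $\omega^{1+2^sq}=\omega^2$, which cancel in characteristic~$2$. Then, substituting $\gamma=\omega$ into the Case~2 expression for $(\gamma x^{q-1}+1)^n+\gamma(x^{q-1}+\gamma^q)^n$ computed above, the three coefficients $\gamma(1+\gamma^{2^s(q+1)})$, $\gamma^{2^s}(1+\gamma^{q+1})$ and $1+\gamma^{2^t(q+1)}$ each collapse — again by the same modulo-$3$ arithmetic together with $1+\omega+\omega^2=0$ — to $\omega^2$. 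Hence
\[(\omega x^{q-1}+1)^n+\omega(x^{q-1}+\omega^q)^n=\omega^2\bigl(x^{n(q-1)}+x^{2^s(q-1)}+1\bigr),\]
so the polynomial of Theorem~\ref{thZiev1} for $(\beta,\gamma)=(1,\omega)$ is precisely $\omega^2\,x^{n+k(q+1)}\bigl(x^{n(q-1)}+x^{2^s(q-1)}+1\bigr)$. As $\omega^2\in\gf_q^\ast$, multiplication by it is a bijection, so Theorem~\ref{thZiev1} yields that $f(x)$ permutes $\gf_{q^2}^\ast$ if and only if $\gcd(n+2k,q-1)=1$ and $\gcd(n,q+1)=1$.

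It remains to note that the second condition holds automatically. Since $q+1=2^m+1$ is odd and $n=2^s(1+2^{t-s})$, we get $\gcd(n,q+1)=\gcd(1+2^{d},2^m+1)$ with $d:=t-s$ odd; because $\gcd(d,m)$ divides the odd number $d$ it is odd, so $d/\gcd(d,m)$ is odd while $m/\gcd(d,m)$ is even, and the standard identity ($\gcd(2^a+1,2^b+1)=2^{\gcd(a,b)}+1$ when $a/\gcd(a,b)$ and $b/\gcd(a,b)$ are both odd, and $=1$ otherwise) gives $\gcd(1+2^d,2^m+1)=1$. This proves the claimed equivalence, and the ``in particular'' case is $s=0$, $t=1$, $n=3$, where $x^{2^s(q-1)}=x^{q-1}$.

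The argument is almost entirely a direct appeal to Theorem~\ref{thZiev1}; the only steps needing attention are the bookkeeping of the six exponents of $\omega$ modulo $3$ (powers of $2$ alternate between $1$ and $2$ mod $3$, and it is exactly the parities of $s$ and $t$ that place us in Case~2 and simultaneously force all three coefficients to equal $\omega^2$), and the elementary number-theoretic fact about $\gcd(2^a+1,2^b+1)$ used to dispose of the side condition $\gcd(n,q+1)=1$.
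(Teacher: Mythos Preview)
Your proposal is correct and follows essentially the same route as the paper: specialize Theorem~\ref{thZiev1} with $\beta=1$ and $\gamma=\omega$ a primitive cube root of unity (admissible since $2\mid m$), observe that the parity conditions on $s,t$ place us in Case~2 with all three surviving coefficients equal to $\omega^2$, and then invoke the theorem. The only addition you make is a detailed justification of $\gcd(n,\,q+1)=1$ via the $\gcd(2^a+1,2^b+1)$ identity, which the paper simply asserts.
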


{\bf Case 3:} $\gamma^{2^s}+\gamma^{1+2^tq}=0$. Then $\gamma^{2^s}=\gamma^{1+2^tq}$,  and
$$\gamma^{2^t}+\gamma^{1+2^sq}=\gamma^{2^t}+\gamma\gamma^{(1+2^tq)q}= \gamma^{2^t}\left(1+\gamma^{q+1}\right), $$
$$\gamma+\gamma^{n}=\gamma+\gamma^{2^s+2^t}=\gamma+\gamma\cdot\gamma^{2^t(1+q)}=\gamma\left(1+\gamma^{2^t(q+1)}\right),$$
$$1+\gamma^{1+nq}=1+\gamma^{1+(2^s+2^t)q}=1+\gamma^{2^sq}\cdot\gamma^{1+2^tq}=1+\gamma^{2^s(q+1)}.$$

In this case, we have
$$(\gamma x^{q-1}+1)^n +\gamma (x^{q-1}+\gamma^q)^n $$
$$=\gamma\left(1+\gamma^{2^t(q+1)}\right)x^{n(q-1)}+\gamma^{2^t}\left(1+\gamma^{q+1}\right)x^{2^t(q-1)}+\left(1+\gamma^{2^s(q+1)}\right).$$

We take $n=2^s+2^t,\,\, 2|t, \,\,2\not|s$ and $m$ is even, then $\gcd(n,\,\, 2^m+1)=1$. Take $\gamma=\omega$ to be a primitive cubic root of unity, then $\gamma\left(1+\gamma^{2^t(q+1)}\right)=\gamma^{2^t}\left(1+\gamma^{q+1}\right)=1+\gamma^{2^s(q+1)}=\omega^2$. Thus, we get the following.

\begin{corollary}Let $q=2^{m},\,\, 2|m$  and  let $n=2^s+2^t, \,\, 0\le s< t, \,\,2|t, \,\,2\not|s$. Then
$$ f(x) = x^{n+k(q+1)}  \left(  x^{n(q-1)}+ x^{2^t(q-1)}+1 \right)  $$
permutes $\gf_{q^2}^\ast$ if and only if $\gcd(n+2k,\,\,q-1) = 1$. In particular,
$$g(x)=x^{6+k(q+1)}  \left(  x^{6(q-1)}+ x^{4(q-1)}+1 \right)$$
permutes $\gf_{q^2}^\ast$ if and only if $\gcd(6+2k,\,\,q-1) = 1$.\end{corollary}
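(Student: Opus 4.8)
The plan is to obtain the corollary as the Case~3 specialization of Theorem~\ref{thZiev1}. First I would take $\beta=1$ and $\gamma=\omega$ in Theorem~\ref{thZiev1}, where $\omega$ is a primitive cube root of unity. Since $\omega$ is a root of $x^2+x+1$, it lies in $\gf_4\subseteq\gf_{q^2}$; and because $2\mid m$ we have $q=2^m\equiv 1\pmod 3$, hence $q+1\equiv 2\pmod 3$ and $\omega^{q+1}=\omega^2\ne 1$. Thus $\gamma=\omega$ is admissible in Theorem~\ref{thZiev1}, and $\beta=1$ trivially satisfies $\beta^{q+1}=1$.

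Next I would verify that this choice of $\gamma$ falls into Case~3, i.e.\ that $\gamma^{2^s}=\gamma^{1+2^t q}$. Under the hypotheses $2\nmid s$, $2\mid t$, $2\mid m$ one has $2^s\equiv 2\pmod 3$ and $2^t q=2^{t+m}\equiv 1\pmod 3$, so both exponents are $\equiv 2\pmod 3$ and the identity holds. The Case~3 display already established in the text then gives
$$(\gamma x^{q-1}+1)^n+\gamma(x^{q-1}+\gamma^q)^n=\gamma\bigl(1+\gamma^{2^t(q+1)}\bigr)x^{n(q-1)}+\gamma^{2^t}\bigl(1+\gamma^{q+1}\bigr)x^{2^t(q-1)}+\bigl(1+\gamma^{2^s(q+1)}\bigr),$$
and a short bookkeeping of parities modulo $3$ (using $1+\omega+\omega^2=0$) shows each of the three coefficients equals $\omega^2$. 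Working in characteristic $2$, this identifies Zieve's polynomial \eqref{eqZi1} with $(\beta,\gamma)=(1,\omega)$ as $\omega^2$ times the trinomial $x^{n+k(q+1)}\bigl(x^{n(q-1)}+x^{2^t(q-1)}+1\bigr)=f(x)$; since $\omega^2\in\gf_{q^2}^\ast$, the latter permutes $\gf_{q^2}^\ast$ if and only if the former does.

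By Theorem~\ref{thZiev1} the former permutes $\gf_{q^2}^\ast$ iff $\gcd(n+2k,q-1)=1$ and $\gcd(n,q+1)=1$, so it remains to check that the second condition is automatic under the hypotheses. Writing $n=2^s(1+2^{t-s})$ and using that $2^m+1$ is odd, $\gcd(n,q+1)=\gcd(1+2^{t-s},2^m+1)$; since $t-s$ is odd while $m$ is even, a prime divisor $p$ of both would force the multiplicative order of $2$ modulo $p$ to have $2$-adic valuation exactly $1$ (from $p\mid 1+2^{t-s}$) and at least $2$ (from $p\mid 1+2^m$), a contradiction. Hence $\gcd(n,q+1)=1$ always holds, which is precisely the fact $\gcd(n,2^m+1)=1$ recorded in the Case~3 discussion, and $f$ permutes $\gf_{q^2}^\ast$ iff $\gcd(n+2k,q-1)=1$. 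The ``in particular'' assertion is the instance $s=1$, $t=2$, for which $n=6$, $n(q-1)=6(q-1)$ and $2^t(q-1)=4(q-1)$. The only mildly delicate point I foresee is the $\gcd(n,2^m+1)=1$ verification; everything else is a direct reading of Theorem~\ref{thZiev1} together with the Case~3 identity established above.
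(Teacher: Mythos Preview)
Your proposal is correct and follows essentially the same approach as the paper: specialize Theorem~\ref{thZiev1} with $\beta=1$ and $\gamma=\omega$ a primitive cube root of unity, check that this lands in Case~3, verify that all three coefficients in the Case~3 display equal $\omega^2$, and conclude via Theorem~\ref{thZiev1}. The only difference is that you supply an explicit argument for $\gcd(n,2^m+1)=1$ via the $2$-adic valuation of the order of $2$, whereas the paper simply asserts this fact; your added verification is correct and a welcome detail.
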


{\bf Case 4:} $1+\gamma^{1+nq}=0$. Then $\gamma^{-n}=\gamma^q$.

$$\gamma+\gamma^n=\gamma^n(1+\gamma^{1-n})=\gamma^n(1+\gamma^{1+q}),$$
$$\gamma^{2^t}+\gamma^{1+2^sq}=\gamma^{-2^tq}\left(\gamma^{2^t(q+1)}+\gamma^{(1+nq)}\right)= \gamma^{-2^tq}\left(1+\gamma^{2^t(q+1)}\right), $$
$$\gamma^{2^s}+\gamma^{1+2^tq}=\gamma^{-2^sq}\left(\gamma^{2^s(q+1)}+\gamma^{(1+nq)}\right)= \gamma^{-2^sq}\left(1+\gamma^{2^s(q+1)}\right), $$

In this case, we have
$$(\gamma x^{q-1}-1)^n -\gamma (x^{q-1}-\gamma^q )^n $$
$$=\gamma^n(1+\gamma^{1+q})x^{n(q-1)}+\gamma^{-2^tq}\left(1+\gamma^{2^t(q+1)}\right)x^{2^t(q-1)}+\gamma^{-2^sq}\left(1+\gamma^{2^s(q+1)}\right)x^{2^s(q-1)}.$$

 Take $n=2^s+2^t,\,\, 2|s, \,\,2|t$ and $m$ is even with $\gcd(n,\,\, 2^m+1)=1$. Take $\gamma=\omega$ to be a primitive cubic root of unity, then $\gamma^n(1+\gamma^{1+q})=\gamma^{-2^tq}\left(1+\gamma^{2^t(q+1)}\right)=\gamma^{-2^sq}\left(1+\gamma^{2^s(q+1)}\right)=1$. We have then obtained the following.

\begin{corollary}
Let $q=2^{m},\,\, 2|m$  and let $n=2^s+2^t, \,\, 0\le s< t, \,\,2|s, \,\,2|t$. Then
$$ f(x) = x^{n+k(q+1)}  \left(  x^{n(q-1)}+ x^{2^t(q-1)}+x^{2^s(q-1)} \right)  $$
permutes $\gf_{q^2}^\ast$ if and only if $\gcd(n+2k,\,\,q-1) = 1$ and $\gcd(n,\,\, q+1)=1$. In particular,
$$g(x)=x^{5+k(q+1)}  \left(  x^{5(q-1)}+ x^{4(q-1)}+x^{q-1} \right)$$
permutes $\gf_{q^2}^\ast$ if and only if $\gcd(5+2k,\,\,q-1) = 1$ and $4|m$.\end{corollary}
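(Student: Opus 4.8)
The plan is to read off the corollary from Theorem~\ref{thZiev1} by specializing its parameters to $\beta=1$ and $\gamma=\omega$, where $\omega\in\gf_{q^2}^\ast$ is a primitive cube root of unity (which exists since $3\mid q^2-1=2^{2m}-1$), and then showing that for this choice the inner factor $(\gamma x^{q-1}-\beta)^n-\gamma(x^{q-1}-\gamma^q\beta)^n$ collapses to the trinomial $x^{n(q-1)}+x^{2^t(q-1)}+x^{2^s(q-1)}$.

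First I would check that $(\beta,\gamma)=(1,\omega)$ is admissible in Theorem~\ref{thZiev1}: trivially $\beta^{q+1}=1$, and since $m$ is even we have $q=2^m\equiv 1\pmod 3$, so $\gamma^{q+1}=\omega^{q+1}=\omega^2\neq 1$. Next I would verify that the parities $2\mid s$, $2\mid t$ put us in Case~4: from $2^s\equiv 2^t\equiv 1\pmod 3$ we get $n=2^s+2^t\equiv 2\pmod 3$, and with $q\equiv 1\pmod 3$ this gives $1+nq\equiv 0\pmod 3$, whence $\gamma^{1+nq}=\omega^{1+nq}=1$ and therefore $1+\gamma^{1+nq}=0$ in characteristic~$2$. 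This is precisely the Case~4 hypothesis, so the expansion recorded there applies.

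With Case~4 in force the inner factor equals
$\gamma^n(1+\gamma^{1+q})x^{n(q-1)}+\gamma^{-2^tq}(1+\gamma^{2^t(q+1)})x^{2^t(q-1)}+\gamma^{-2^sq}(1+\gamma^{2^s(q+1)})x^{2^s(q-1)}$,
and I would evaluate the three coefficients at $\gamma=\omega$ using $1+\omega^2=\omega$, $\omega^3=1$, and the mod-$3$ congruences $n\equiv 2$, $1+q\equiv 2$, $-2^tq\equiv 2$, $2^t(q+1)\equiv 2$, $-2^sq\equiv 2$, $2^s(q+1)\equiv 2$. Each coefficient then reduces to $\omega^2\cdot\omega=1$, so $f(x)=x^{n+k(q+1)}\bigl(x^{n(q-1)}+x^{2^t(q-1)}+x^{2^s(q-1)}\bigr)$ is exactly the polynomial of Theorem~\ref{thZiev1} for $(\beta,\gamma)=(1,\omega)$, and that theorem (which allows arbitrary integer $k$) gives the criterion $\gcd(n+2k,q-1)=1$ and $\gcd(n,q+1)=1$. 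For the particular case $n=5$, the binary expansion $5=2^0+2^2$ forces $s=0$, $t=2$, both even, so $g$ is the corresponding instance of $f$; since $2^m\bmod 5$ has period~$4$ and equals $-1$ exactly when $m\equiv 2\pmod 4$, the condition $\gcd(5,2^m+1)=1$ combined with the standing hypothesis $2\mid m$ is equivalent to $4\mid m$, while $\gcd(n+2k,q-1)=\gcd(5+2k,q-1)$.

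The bookkeeping above is routine; the one point that really needs care --- and the step I expect to be the crux --- is the verification that the Case~4 hypothesis $1+\gamma^{1+nq}=0$ holds for $\gamma=\omega$, that is, the congruence analysis modulo~$3$ tying together the parities of $s$, $t$ and $m$. Once that is settled, the rest is arithmetic with a cube root of unity followed by an appeal to Theorem~\ref{thZiev1}.
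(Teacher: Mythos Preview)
Your proposal is correct and follows essentially the same route as the paper: specialize Theorem~\ref{thZiev1} at $\beta=1$, $\gamma=\omega$, verify via the mod-$3$ congruences coming from $2\mid s$, $2\mid t$, $2\mid m$ that the Case~4 hypothesis $1+\gamma^{1+nq}=0$ holds, check that the three surviving coefficients all reduce to $1$, and then read off the iff criterion directly from the theorem. Your treatment of the special case $n=5$ and the reduction of $\gcd(5,q+1)=1$ together with $2\mid m$ to $4\mid m$ also matches the paper's computation.
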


\subsection{The second case}

For an integer $k$ with $0\le k\le n$,  the coefficients of $x^{(n-k)(q-1)}$ and $x^{k(q-1)}$ in the polynomial $(\delta x^{q-1}-\beta\delta^q)^n -\delta (x^{q-1}- \beta)^n$ are
$$(-1)^k\left(\delta^{n-k+kq}-\delta\right){n\choose k}\beta^k \quad \mbox{and}\quad (-1)^{n-k}\left(\delta^{(n-k)q+k}-\delta\right){n\choose k}\beta^{n-k}. $$

If ${ n \choose k}\not\equiv0\pmod{p}$ and both the coefficients of $x^{(n-k)(q-1)}$ and $x^{k(q-1)}$ in the polynomial $(\delta x^{q-1}-\beta\delta^q)^n -\delta (x^{q-1}- \beta)^n$ are 0, then we have $\delta^{n-k+kq}=\delta$. Hence
$$\delta^{(n-k)q+k}-\delta=\delta^q-\delta,$$
here we have used the fact that $\delta^{q^2}=\delta$, which is 0 if and only if $\delta\in\gf_q$, a contradiction to the assumption of Theorem \ref{thZiev2}. Therefore we have proved that at least one of the coefficients of $x^{(n-k)(q-1)}$ and $x^{k(q-1)}$ in the polynomial $(\delta x^{q-1}-\beta\delta^q)^n -\delta (x^{q-1}- \beta)^n$ is not 0 when ${ n \choose k}\not\equiv0\pmod{p}$. Consequently, the polynomial $(\delta x^{q-1}-\beta\delta^q)^n -\delta (x^{q-1}- \beta)^n$ is a trinomial polynomial of $x$ only when $\prod_{i=0}^l(n_i+1)\le 6$,  it follows that $n=2^s+2^t, \,\, 0\le s<t$ for $p=2$.

Let $p=2$, $n=2^s+2^t, \,\, 0\le s<t$. Observe that
$$(\delta x^{q-1}-\delta^q)^n -\delta (x^{q-1}- 1)^n=(\delta x^{q-1}+\delta^q)^n +\delta (x^{q-1}+ 1)^n  $$
$$=(\delta+\delta^n)x^{n(q-1)}+(\delta+\delta^{2^t+2^sq})x^{2^t(q-1)}+(\delta+\delta^{2^s+2^tq})x^{2^s(q-1)}+(\delta+\delta^{nq}).$$
We also have the following four cases for consideration.

{\bf Case 1:} $ \delta+\delta^n=0$. Similarly, we have
%Then $\delta^{2^t}=\delta^{1-2^s}$, $\delta^{2^s}=\delta^{1-2^t}$ and
%$$\delta+\delta^{2^t+2^sq}=\delta\left(1+\delta^{2^s(q-1)}\right),$$
%$$\delta+\delta^{2^s+2^tq}=\delta\left(1+\delta^{2^t(q-1)}\right),$$
%$$\delta+\delta^{nq}=\delta+\delta^{q}=\delta(1+\delta^{q-1}).$$
$$(\delta x^{q-1}+\delta^q)^n +\delta (x^{q-1}+1)^n $$
$$=\delta\left(1+\delta^{2^s(q-1)}\right)x^{2^t(q-1)}+\delta\left(1+\delta^{2^t(q-1)}\right)x^{2^s(q-1)}+\delta(1+\delta^{q-1}).$$

Now if we take $n=2^s+2^t,\,\, 2\not|st$ and $m$ is odd, then $\gcd(n, 2^m-1)=1$. Take $\delta=\omega$ to be a primitive cubic root of unity, then $\delta\left(1+\delta^{2^s(q-1)}\right)=\delta\left(1+\delta^{2^t(q-1)}\right)=\omega^2$ and $\delta(1+\delta^{q-1})=1$. Hence
$$(\delta (\omega^2 x)^{q-1}+\delta^q)^n +\delta ((\omega^2 x)^{q-1}+1)^n= x^{2^t(q-1)}+x^{2^s(q-1)}+1, $$
and we
have the following.

\begin{corollary}Let $q=2^{m},\,\, 2\not|m$  and  let $n=2^s+2^t, \,\, 0\le s< t, \,\,2\not|st$. Then
$$f(\omega^2x)/\omega^2 = x^{n+k(q+1)}  \left(  x^{2^t(q-1)}+x^{2^s(q-1)}+1 \right)  $$
permutes $\gf_{q^2}^\ast$ if and only if $\gcd(n+2k,\,\,q-1) = 1$. In particular,
$$g(x)=x^{10+k(q+1)}  \left(  x^{8(q-1)}+ x^{2(q-1)}+1 \right)$$
permutes $\gf_{q^2}^\ast$ if and only if $\gcd(10+2k,\,\,q-1) = 1$.\end{corollary}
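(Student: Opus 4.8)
The key point is that the corollary is, up to routine verification, a restatement of Theorem~\ref{thZiev2}: the computation displayed immediately before the statement already shows that, on taking $\beta=1$ and $\delta=\omega$ (a primitive cube root of unity) in Theorem~\ref{thZiev2}, then substituting $x\mapsto\omega^2x$ and dividing by $\omega^2$, the polynomial $f$ of Theorem~\ref{thZiev2} becomes
\[ x^{n+k(q+1)}\bigl(x^{2^t(q-1)}+x^{2^s(q-1)}+1\bigr). \]
So the proof must (i) verify that $\beta=1$, $\delta=\omega$ satisfies the hypotheses of Theorem~\ref{thZiev2} and places us in its Case~1; (ii) observe that $x\mapsto\omega^2x$ followed by scaling is a quasi-multiplicative transformation, hence preserves the permutation property over $\gf_{q^2}^\ast$; and (iii) reduce the coprimality condition of Theorem~\ref{thZiev2} to the one stated. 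Throughout, $\omega^3=1$, and in characteristic $2$, $1+\omega=\omega^2$ and $1+\omega^2=\omega$.

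For (i): $\beta^{q+1}=1$ is immediate, and since $m$ is odd, $q=2^m\equiv 2\pmod{3}$, so $3\mid q^2-1$ but $3\nmid q-1$, hence $\omega\in\gf_{q^2}\setminus\gf_q$ as Theorem~\ref{thZiev2} requires. The hypothesis $2\nmid st$ means $s$ and $t$ are both odd, so $2^s\equiv 2^t\equiv 2\pmod{3}$, $n=2^s+2^t\equiv 1\pmod{3}$, hence $\omega^{n-1}=1$ and $\delta+\delta^n=\omega(1+\omega^{n-1})=0$, which is precisely Case~1. The coefficient evaluations quoted just above then follow from $q-1\equiv 1\pmod{3}$ (so $2^s(q-1)\equiv 2^t(q-1)\equiv 2\pmod{3}$, giving $\delta^{2^s(q-1)}=\delta^{2^t(q-1)}=\omega^2$ and $\delta^{q-1}=\omega$), while $2^{t+1}\equiv 2^{s+1}\equiv 1\pmod{3}$ (as $s+1,t+1$ are even) and $n+k(q+1)\equiv 1\pmod{3}$ are exactly what make the substitution $x\mapsto\omega^2x$, after division by $\omega^2$, turn the coefficients into $1$. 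For (ii), the transformation has the form $a f(cx^{d})$ with $d=1$, $\gcd(1,q^2-1)=1$, $a=\omega^{-2}$, $c=\omega^{2}$, so by the remark following Definition~2.2 the resulting trinomial permutes $\gf_{q^2}^\ast$ if and only if $f$ does, i.e.\ (by Theorem~\ref{thZiev2}) if and only if $\gcd\bigl(n(n+2k),\,q-1\bigr)=1$.

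Step (iii) is the only one needing a genuine argument, and it is where ``$m$ odd'' is used a second time: one must check that $\gcd(n,q-1)=1$ automatically, so that $\gcd\bigl(n(n+2k),q-1\bigr)=1$ collapses to $\gcd(n+2k,q-1)=1$. Since $q-1=2^m-1$ is odd, $\gcd(n,q-1)=\gcd\bigl(1+2^{t-s},\,2^m-1\bigr)$; if a prime $\ell$ divided both, then $2^{t-s}\equiv -1$ and $2^m\equiv 1\pmod{\ell}$, so the multiplicative order $d$ of $2$ mod $\ell$ divides $2(t-s)$ but not $t-s$, which forces $d$ to be even, contradicting $d\mid m$ with $m$ odd. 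This completes the main equivalence; the ``in particular'' clause is simply the instance $s=1$, $t=3$, $n=10$.

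I do not expect any serious obstacle. Beyond the short coprimality lemma of step (iii), everything is mod-$3$ exponent bookkeeping together with a citation of Theorem~\ref{thZiev2} and the quasi-multiplicative invariance of the permutation property. The one thing worth flagging is that ``$m$ odd'' enters twice — to put $\omega$ outside $\gf_q$ (so Theorem~\ref{thZiev2} applies with $\delta=\omega$) and to force $\gcd(n,q-1)=1$ — and that the second use is precisely why the criterion here is cleaner than the $\gcd\bigl(n(n+2k),q-1\bigr)=1$ of Theorem~\ref{thZiev2}.
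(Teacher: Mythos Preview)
Your proposal is correct and follows essentially the same route as the paper: specialize Theorem~\ref{thZiev2} with $\beta=1$, $\delta=\omega$, land in Case~1 via $\delta+\delta^n=0$, and use the QM substitution $x\mapsto\omega^2x$ to normalize the coefficients. The only difference is that the paper simply asserts ``then $\gcd(n,2^m-1)=1$'' without justification, whereas you supply the order argument in step~(iii); so your version is strictly more complete on that point.
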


{\bf Case 2:} $ \delta+\delta^{2^t+2^sq}=0$. Similarly,
%Then we have
%$$\delta+\delta^n=\delta^n\left(1+\delta^{2^s(q-1)}\right),$$
%$$\delta+\delta^{2^s+2^tq}=\delta+\delta^q=\delta\left(1+\delta^{q-1}\right),$$
%$$\delta+\delta^{nq}=\delta(1+\delta^{2^t(q-1)}).$$
$$(\delta x^{q-1}+\delta^q)^n +\delta (x^{q-1}+ 1)^n $$
$$=\delta^n\left(1+\delta^{2^s(q-1)}\right)x^{n(q-1)}+\delta\left(1+\delta^{q-1}\right)x^{2^s(q-1)}+\delta(1+\delta^{2^t(q-1)}).$$

If we take $n=2^s+2^t,\,\, 2|s, \,\,2\not|t$ and $m$ is odd, then $\gcd(n, 2^m-1)=1$. Take $\delta=\omega$ to be a primitive cubic root of unity, then $\delta^{n}\left(1+\delta^{2^s(q-1)}\right)=\delta(1+\delta^{2^t(q-1)})=\omega^2$ and $\delta\left(1+\delta^{q-1}\right)=1$,  and we have
$$(\delta (\omega^2 x)^{q-1}+\delta^q)^n +\delta ((\omega^2 x)^{q-1}+ 1)^n =\omega^2\left(x^{n(q-1)}+x^{2^s(q-1)}+1\right).$$
Hence we have proved the following corollary.

\begin{corollary}Let $q=2^{m},\,\, 2\not|m$  and  let $n=2^s+2^t, \,\, 0\le s< t, \,\,2|s, \,\,2\not|t$. Then
$$f(\omega^2 x)/\omega^2 = x^{n+k(q+1)}  \left(x^{n(q-1)}+x^{2^s(q-1)}+1 \right)  $$
permutes $\gf_{q^2}^\ast$ if and only if $\gcd(n+2k,\,\,q-1) = 1$. In particular,
$$g(x)=x^{3+k(q+1)}  \left(  x^{3(q-1)}+ x^{q-1}+1 \right)$$
permutes $\gf_{q^2}^\ast$ if and only if $\gcd(3+2k,\,\,q-1) = 1$.\end{corollary}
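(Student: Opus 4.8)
The plan is to derive this corollary from Theorem \ref{thZiev2} by taking $\delta=\omega$ a primitive cube root of unity and $\beta=1$. Because $m$ is odd, $q=2^m\equiv 2\pmod 3$, so $3\mid q+1$ while $3\nmid q-1$; hence $\omega\in\gf_{q^2}\setminus\gf_q$, with $\omega^{q+1}=1$, $\omega^q=\omega^{-1}=\omega^2$ and $\omega^{q-1}=\omega$. Thus $\delta=\omega$, $\beta=1$ is an admissible choice in Theorem \ref{thZiev2}, for which the polynomial there reads $f(x)=x^{n+k(q+1)}\bigl((\omega x^{q-1}-\omega^q)^n-\omega(x^{q-1}-1)^n\bigr)$, i.e. in characteristic $2$, $f(x)=x^{n+k(q+1)}\bigl((\omega x^{q-1}+\omega^q)^n+\omega(x^{q-1}+1)^n\bigr)$.

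Next I would expand the bracket exactly as in the computation labelled ``Case 2'' above: for $n=2^s+2^t$ it equals
\[
(\omega+\omega^n)x^{n(q-1)}+(\omega+\omega^{2^t+2^sq})x^{2^t(q-1)}+(\omega+\omega^{2^s+2^tq})x^{2^s(q-1)}+(\omega+\omega^{nq}).
\]
Reducing exponents modulo $3$ under the hypotheses $2\mid s$, $2\nmid t$ (so $2^s\equiv 1$, $2^t\equiv 2$, $q\equiv 2$) gives $2^t+2^sq\equiv 1$, $n\equiv nq\equiv 0$ and $2^s+2^tq\equiv 2$; since $\omega^3=1$ the $x^{2^t(q-1)}$-term vanishes and, using $1+\omega=\omega^2$ and $\omega+\omega^2=1$, the bracket becomes $\omega^2x^{n(q-1)}+x^{2^s(q-1)}+\omega^2$. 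Applying the further QM substitution $x\mapsto\omega^2x$ multiplies the coefficient of $x^{2^s(q-1)}$ by $\omega^{2\cdot 2^s(q-1)}=\omega^2$, fixes the other two coefficients (as $2n(q-1)\equiv 0$ and the constant has exponent $0$), and fixes the monomial $x^{n+k(q+1)}$ as well because $n+k(q+1)\equiv 0\pmod 3$; dividing by $\omega^2$ then yields exactly $x^{n+k(q+1)}\bigl(x^{n(q-1)}+x^{2^s(q-1)}+1\bigr)=f(\omega^2x)/\omega^2$, the polynomial in the statement. Since $x\mapsto\omega^2x$ (with $d=1$, $c=\omega^2$) and division by the nonzero scalar $\omega^2$ are QM operations, this polynomial permutes $\gf_{q^2}^\ast$ iff $f$ does.

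It remains to translate the criterion. By Theorem \ref{thZiev2}, $f$ permutes $\gf_{q^2}^\ast$ iff $\gcd(n(n+2k),q-1)=1$, i.e. iff $\gcd(n,q-1)=1$ and $\gcd(n+2k,q-1)=1$ both hold, so everything reduces to showing $\gcd(n,q-1)=1$ is automatic here. Write $n=2^s+2^t=2^s(1+2^{t-s})$ with $t-s$ odd; since $\gcd(2^s,2^m-1)=1$ it suffices to prove $\gcd(1+2^{t-s},2^m-1)=1$. If an odd prime $p$ divided both, then $p\mid 2^{t-s}+1$ and $p\nmid 2^{t-s}-1$, so $\mathrm{ord}_p(2)\mid 2(t-s)$ but $\mathrm{ord}_p(2)\nmid t-s$; as $t-s$ is odd this forces $\mathrm{ord}_p(2)$ to be even, whereas $p\mid 2^m-1$ forces $\mathrm{ord}_p(2)\mid m$, hence odd --- a contradiction. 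Thus $\gcd(n,q-1)=1$, the criterion collapses to $\gcd(n+2k,q-1)=1$, and the ``in particular'' assertion is the case $s=0$, $t=1$ (so $n=3$, $2^s(q-1)=q-1$), for which the parity hypotheses hold trivially. I expect the only part needing its own argument to be this elementary divisibility fact about $\gcd(2^s+2^t,2^m-1)$, together with the bookkeeping of the scalar $\omega^2$ and the substitution $x\mapsto\omega^2x$ required to land precisely on the trinomial in the statement; all the genuine permutation-theoretic content is imported from Theorem \ref{thZiev2}.
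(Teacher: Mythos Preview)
Your proof is correct and follows essentially the same route as the paper: specialize Theorem~\ref{thZiev2} with $\beta=1$ and $\delta=\omega$ (valid since $m$ odd makes $\omega\in\gf_{q^2}\setminus\gf_q$), use the general expansion of the bracket for $n=2^s+2^t$ to see that the $x^{2^t(q-1)}$ term vanishes and the remaining coefficients are $\omega^2,1,\omega^2$, and then apply the substitution $x\mapsto\omega^2x$ to equalize the coefficients. The paper does exactly this, only it packages the vanishing of the $x^{2^t(q-1)}$ term as the ``Case~2'' hypothesis $\delta+\delta^{2^t+2^sq}=0$ and simply asserts that $\gcd(n,2^m-1)=1$ under the stated parity conditions, whereas you supply the short order-of-$2$ argument for that fact; this extra justification is a welcome addition but does not change the approach.
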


{\bf Case 3:} $ \delta+\delta^{2^s+2^tq}=0$. Similarly, %Then we have
%$$\delta+\delta^n=\delta^n\left(1+\delta^{2^t(q-1)}\right),$$
%$$\delta+\delta^{2^t+2^sq}=\delta+\delta^q=\delta\left(1+\delta^{q-1}\right),$$
%$$\delta+\delta^{nq}=\delta(1+\delta^{2^s(q-1)}).$$
$$(\delta x^{q-1}-\delta^q)^n -\delta (x^{q-1}- 1)^n $$
$$=\delta^n\left(1+\delta^{2^t(q-1)}\right)x^{n(q-1)}+\delta\left(1+\delta^{q-1}\right)x^{2^t(q-1)}+\delta(1+\delta^{2^s(q-1)}).$$

We take $n=2^s+2^t,\,\, 2\not|s, \,\,2|t$ and $m$ is odd, then $\gcd(n,\,\, 2^m-1)=1$. Take $\delta=\omega$ to be a primitive cubic root of unity, then $\delta^{n}\left(1+\delta^{2^t(q-1)}\right)=\delta(1+\delta^{2^s(q-1)})=\omega^2$ and $\delta\left(1+\delta^{q-1}\right)=1$,  and we have
$$(\delta (\omega^2 x)^{q-1}+\delta^q)^n +\delta ((\omega^2 x)^{q-1}+ 1)^n =\omega^2\left(x^{n(q-1)}+x^{2^t(q-1)}+1\right).$$
Therefore we have proved the following corollary.

\begin{corollary}Let $q=2^{m},\,\, 2\not|m$  and  let $n=2^s+2^t, \,\, 0\le s< t, \,\,2\not|s, \,\,2|t$. Then
$$f(\omega^2x)/\omega^2 = x^{n+k(q+1)}  \left(x^{n(q-1)}+x^{2^t(q-1)}+1 \right)  $$
permutes $\gf_{q^2}^\ast$ if and only if $\gcd(n+2k,\,\,q-1) = 1$. In particular,
$$g(x)=x^{6+k(q+1)}  \left(  x^{6(q-1)}+ x^{4(q-1)}+1 \right)$$
permutes $\gf_{q^2}^\ast$ if and only if $\gcd(6+2k,\,\,q-1) = 1$.\end{corollary}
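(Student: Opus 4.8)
The plan is to derive this corollary from Theorem~\ref{thZiev2}, specializing its parameters exactly as in the three corollaries preceding it. Since $m$ is odd, $2^m\equiv 2\pmod 3$ while $2^{2m}\equiv 1\pmod 3$, so $\gf_{q^2}=\gf_{2^{2m}}$ contains a primitive cube root of unity $\omega$ but $\gf_q=\gf_{2^m}$ does not; hence $\omega\in\gf_{q^2}\setminus\gf_q$, and taking $\beta=1\in\mu_{q+1}$ and $\delta=\omega$ fulfils the hypotheses of Theorem~\ref{thZiev2}. That theorem then tells us that
$$f(x)=x^{n+k(q+1)}\bigl((\omega x^{q-1}-\omega^q)^n-\omega(x^{q-1}-1)^n\bigr)$$
permutes $\gf_{q^2}^\ast$ if and only if $\gcd(n(n+2k),\,q-1)=1$. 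So it remains to (a) identify the bracketed factor, after a harmless change of variable, with $x^{n(q-1)}+x^{2^t(q-1)}+1$, and (b) show that for odd $m$ the condition $\gcd(n(n+2k),q-1)=1$ collapses to $\gcd(n+2k,q-1)=1$.

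For (a) I would invoke the expansion recorded just before the four cases of this subsection: with $n=2^s+2^t$ and in characteristic $2$,
$$(\omega x^{q-1}-\omega^q)^n-\omega(x^{q-1}-1)^n=(\omega+\omega^n)x^{n(q-1)}+(\omega+\omega^{2^t+2^sq})x^{2^t(q-1)}+(\omega+\omega^{2^s+2^tq})x^{2^s(q-1)}+(\omega+\omega^{nq}).$$
The parity hypotheses $2\nmid s$, $2\mid t$ are used precisely here: then $2^s\equiv 2$, $2^t\equiv 1\pmod 3$, and since $q=2^m\equiv 2\pmod 3$ one gets $2^s+2^tq\equiv 1\pmod 3$, so $\omega^{2^s+2^tq}=\omega$ and the coefficient of $x^{2^s(q-1)}$ is $\omega+\omega=0$. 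This is exactly Case~3, and the Case~3 simplification (using only $1+\omega+\omega^2=0$ together with $q-1\equiv 1$, $n\equiv 0$, $q+1\equiv 0\pmod 3$) shows that the three surviving coefficients, of $x^{n(q-1)}$, $x^{2^t(q-1)}$ and the constant term, equal $\omega^2$, $1$, $\omega^2$; replacing $x$ by $\omega^2x$ turns the middle one into $\omega^2$ too while leaving the prefactor unchanged, because $n\equiv q+1\equiv 0\pmod 3$ forces $(\omega^2)^{n+k(q+1)}=1$. Dividing through by $\omega^2$ gives $f(\omega^2x)/\omega^2=x^{n+k(q+1)}\bigl(x^{n(q-1)}+x^{2^t(q-1)}+1\bigr)$, and since $x\mapsto\omega^2x$ followed by scaling by $\omega^{-2}$ is a quasi-multiplicative change of variable (with $d=1$), $f(\omega^2x)/\omega^2$ permutes $\gf_{q^2}^\ast$ iff $f(x)$ does.

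Step (b) is the only part that is not bookkeeping modulo $3$. I claim $\gcd(n,\,2^m-1)=1$ whenever $m$ is odd. Write $n=2^s(1+2^{t-s})$ with $t-s\ge 1$; since $\gcd(2^s,2^m-1)=1$ it suffices to show that no odd prime $p$ divides both $1+2^{t-s}$ and $2^m-1$. If $p\mid 1+2^{t-s}$ then $2^{t-s}\equiv -1\pmod p$, so $2^{2(t-s)}\equiv 1$ but $2^{t-s}\not\equiv 1\pmod p$; hence the multiplicative order of $2$ modulo $p$ is even, and so cannot divide the odd integer $m$, which means $p\nmid 2^m-1$. Thus $\gcd(n,q-1)=\gcd(n,2^m-1)=1$, whence $\gcd(n(n+2k),q-1)=1\iff\gcd(n+2k,q-1)=1$; combined with (a) this proves the stated equivalence. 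The ``in particular'' case is simply $s=1$, $t=2$ (so $n=6$, $2^t=4$). The only genuine obstacle I foresee is keeping the residues modulo $3$ (of $2^s$, $2^t$, $q$, $n$, $q+1$, and of the exponents of $\omega$ occurring along the way) mutually consistent; apart from the coprimality claim in (b) and the invariance of the permutation property under quasi-multiplicative equivalence, everything is a direct application of Theorem~\ref{thZiev2} already prepared in this subsection.
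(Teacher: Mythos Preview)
Your proposal is correct and follows essentially the same route as the paper: specialize Theorem~\ref{thZiev2} with $\beta=1$ and $\delta=\omega$, use the four-term expansion of $(\delta x^{q-1}+\delta^q)^n+\delta(x^{q-1}+1)^n$ recorded at the start of the subsection, observe that the parity hypotheses $2\nmid s,\;2\mid t$ (with $m$ odd) make the $x^{2^s(q-1)}$-coefficient vanish (this is exactly Case~3), compute the three surviving coefficients as $\omega^2,1,\omega^2$, and normalize via $x\mapsto\omega^2x$ and division by $\omega^2$. The only difference is that the paper simply asserts $\gcd(n,2^m-1)=1$ for $m$ odd, whereas you supply the short order-of-$2$ argument; otherwise the computations coincide.
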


{\bf Case 4:} $ \delta+\delta^{nq}=0$. Similarly, % Then $\delta^{2^tq}=\delta^{1-2^sq}$, $\delta^{2^sq}=\delta^{1-2^tq}$ and
%$$\delta+\delta^{2^t+2^sq}=\delta^{2^t+2^sq}\left(1+\delta^{2^t(q-1)}\right),$$
%$$\delta+\delta^{2^s+2^tq}=\delta^{2^s+2^tq}\left(1+\delta^{2^s(q-1)}\right),$$
%$$\delta+\delta^{n}=\delta+\delta^{q}=\delta(1+\delta^{q-1}).$$
$$(\delta x^{q-1}+\delta^q)^n +\delta (x^{q-1}+1)^n $$
$$=\delta(1+\delta^{q-1})x^{n(q-1)}+\delta^{2^t+2^sq}\left(1+\delta^{2^t(q-1)}\right)x^{2^t(q-1)}+\delta^{2^s+2^tq}\left(1+\delta^{2^s(q-1)}\right)x^{2^s(q-1)}.$$

Take $n=2^s+2^t,\,\, 2|s, 2|t$ and $m$ is odd, then $\gcd(n,\,\, 2^m-1)=1$. Take $\delta=\omega$ to be a primitive cubic root of unity, then $\delta(1+\delta^{q-1})=1$ and $\delta^{2^t+2^sq}\left(1+\delta^{2^t(q-1)}\right)=\delta^{2^s+2^tq}\left(1+\delta^{2^s(q-1)}\right)=\omega^2$, and we have
$$(\delta (\omega x)^{q-1}+\delta^q)^n +\delta ((\omega x)^{q-1}+ 1)^n =\omega^2\left( x^{n(q-1)}+x^{2^t(q-1)}+ x^{2^s(q-1)}\right).  $$
 Hence we get the next result.

\begin{corollary}Let $q=2^{m},\,\, 2\not|m$  and  let $n=2^s+2^t, \,\, 0\le s< t, \,\,2|s, \, 2|t$. Then
$$ f(\omega x)/\omega = x^{n+k(q+1)}  \left(  x^{n(q-1)}+x^{2^t(q-1)}+ x^{2^s(q-1)}\right)  $$
permutes $\gf_{q^2}^\ast$ if and only if $\gcd(n+2k,\,\,q-1) = 1$. In particular,
$$g(x)=x^{5+k(q+1)}  \left(  x^{5(q-1)}+ x^{4(q-1)}+x^{(q-1)} \right)$$
permutes $\gf_{q^2}^\ast$ if and only if $\gcd(5+2k,\,\,q-1) = 1$.\end{corollary}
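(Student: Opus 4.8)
I would obtain this corollary as a specialization of Theorem~\ref{thZiev2}, reusing the ``Case 4'' expansion already carried out above. First fix $\beta=1$ (so $\beta^{q+1}=1$) and take $\delta=\omega$, a primitive cube root of unity in $\gf_{q^2}^\ast$; such an $\omega$ exists since $3\mid q^2-1=2^{2m}-1$, and moreover $\omega\notin\gf_q$ because $m$ odd forces $2^m\equiv 2\pmod 3$, hence $3\nmid 2^m-1=q-1$. Thus $\delta=\omega$ is an admissible choice in Theorem~\ref{thZiev2}, and the polynomial there becomes $f(x)=x^{n+k(q+1)}\bigl((\omega x^{q-1}-\omega^q)^n-\omega(x^{q-1}-1)^n\bigr)$.

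Next I would feed $n=2^s+2^t$ into the expansion of $(\delta x^{q-1}-\delta^q)^n-\delta(x^{q-1}-1)^n$ worked out in Case 4. Reducing exponents of $\omega$ modulo $3$ and using that $s,t$ are even (so $2^s,2^t\equiv 1\pmod 3$), that $q=2^m\equiv 2$ and $q-1\equiv 1\pmod 3$, hence $n\equiv 2$ and $nq\equiv 1\pmod 3$, one checks the Case 4 relation $\delta+\delta^{nq}=0$; so in characteristic $2$ the constant term cancels and only $x^{n(q-1)}$, $x^{2^t(q-1)}$, $x^{2^s(q-1)}$ survive, each with a nonzero coefficient that is a power of $\omega$. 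A monomial substitution $x\mapsto\omega^j x$ for a suitable $j$, together with division by a nonzero constant, then turns $f$ into $x^{n+k(q+1)}\bigl(x^{n(q-1)}+x^{2^t(q-1)}+x^{2^s(q-1)}\bigr)$; this is exactly a QM equivalence (Definition 2.2), which preserves the property of permuting $\gf_{q^2}^\ast$.

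With the equivalence in place, the permutation criterion for the corollary's polynomial coincides with the one for $f$ in Theorem~\ref{thZiev2}, namely $\gcd(n(n+2k),q-1)=1$. To finish, I would show $\gcd(n,q-1)=1$ always holds, so this condition collapses to $\gcd(n+2k,q-1)=1$. Indeed $n=2^s+2^t=2^s(1+2^{t-s})$ and $\gcd(2^s,2^m-1)=1$, so it suffices to see $\gcd(1+2^{t-s},2^m-1)=1$; this is the standard order-of-$2$ fact: a prime $p\mid 1+2^a$ has $\mathrm{ord}_p(2)\mid 2a$ but $\mathrm{ord}_p(2)\nmid a$, while $p\mid 2^m-1$ forces $\mathrm{ord}_p(2)\mid m$, and for $m$ odd $\gcd(2a,m)=\gcd(a,m)$, so no $p$ divides both. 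Hence $\gcd(n,q-1)=1$, proving the stated equivalence; the ``in particular'' assertion is the case $n=5=2^0+2^2$, i.e. $s=0,t=2$ (both even), giving $g(x)=x^{5+k(q+1)}\bigl(x^{5(q-1)}+x^{4(q-1)}+x^{q-1}\bigr)$, which permutes $\gf_{q^2}^\ast$ iff $\gcd(5+2k,q-1)=1$.

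I expect the main obstacle to be the bookkeeping in the middle step: pinning down the exact monomial substitution $x\mapsto\omega^j x$ and the exact overall scalar that make all three surviving coefficients equal to $1$ at once. This is only a computation modulo $3$, but it must be tracked carefully through the exponents $n(q-1)$, $2^t(q-1)$, $2^s(q-1)$ and the factor $\omega^{n+k(q+1)}$ introduced by the substitution; the remaining ingredients — the binomial expansion, the order-of-$2$ lemma, and the appeal to Theorem~\ref{thZiev2} — are routine.
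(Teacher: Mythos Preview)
Your proposal is correct and follows essentially the same route as the paper: specialize Theorem~\ref{thZiev2} with $\beta=1$, $\delta=\omega$, use the Case~4 expansion to isolate the three surviving monomials, normalize the coefficients via a substitution $x\mapsto\omega x$ (this is the ``suitable $j$'' you were unsure of---one checks $\omega^{n+k(q+1)}=\omega^2$ and the bracket becomes $\omega^2$ times the desired trinomial, so $f(\omega x)/\omega$ is exactly the displayed polynomial), and then collapse $\gcd(n(n+2k),q-1)=1$ to $\gcd(n+2k,q-1)=1$ using $\gcd(n,2^m-1)=1$. Your order-of-$2$ argument for the latter is in fact more detailed than the paper's, which simply asserts the gcd condition without proof.
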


Combining Corollaries 3.8, 3.9, 3.12 and 3.13, we obtain the following theorem.

\begin{theorem}\label{zifin} Let $m>0$ and $k$ be integers, $q=2^m$. Then $$f(x)=x^{3+k(q+1)}  \left(  x^{3(q-1)}+ x^{q-1}+1 \right)$$
permutes $\gf_{q^2}^\ast$ if and only if $\gcd(3+2k,\,\,q-1) = 1$. The polynomial
$$f(x)=x^{6+k(q+1)}  \left(  x^{6(q-1)}+ x^{4(q-1)}+1 \right)$$
permutes $\gf_{q^2}^\ast$ if and only if $\gcd(6+2k,\,\,q-1) = 1$. \end{theorem}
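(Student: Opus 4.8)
The plan is to read Theorem \ref{zifin} off the ``in particular'' clauses already recorded in Corollaries 3.8, 3.9, 3.12 and 3.13, after a trivial split on the parity of $m$. For even $m$, Corollary 3.8 contains, as its displayed special case $n=3$, the assertion that $x^{3+k(q+1)}\bigl(x^{3(q-1)}+x^{q-1}+1\bigr)$ permutes $\gf_{q^2}^\ast$ if and only if $\gcd(3+2k,q-1)=1$, and Corollary 3.9 contains, as its special case $n=6$, the assertion that $x^{6+k(q+1)}\bigl(x^{6(q-1)}+x^{4(q-1)}+1\bigr)$ permutes $\gf_{q^2}^\ast$ if and only if $\gcd(6+2k,q-1)=1$. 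For odd $m$, Corollaries 3.12 and 3.13 record exactly the same two biconditionals. Since every positive integer $m$ is either even or odd, assembling these four statements gives the theorem, so the proof is essentially a citation.

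The only steps needing a moment's care are the consistency checks behind those citations. First one must confirm that $n=3$ and $n=6$ land in the right ``Case'': writing $3=2^0+2^1$ one has $2\mid 0$ and $2\nmid 1$, so $n=3$ falls under Case 2 in both Subsection 3.1 and Subsection 3.2, which is precisely the hypothesis of Corollaries 3.8 and 3.12; writing $6=2^1+2^2$ one has $2\nmid 1$ and $2\mid 2$, so $n=6$ falls under Case 3, the hypothesis of Corollaries 3.9 and 3.13. Second, in deriving those corollaries one used the auxiliary divisibility $\gcd(n,2^m+1)=1$ (for even $m$) or $\gcd(n,2^m-1)=1$ (for odd $m$) in order to legitimately take $\gamma=\omega$, resp. $\delta=\omega$, in Theorems 3.3 and 3.5; for $n=3$ this holds because $2^m+1\equiv2\pmod3$ when $m$ is even and $2^m-1\equiv1\pmod3$ when $m$ is odd, and for $n=6$ it reduces to the $n=3$ statement since $2^m\pm1$ is odd, so $\gcd(6,2^m\pm1)=\gcd(3,2^m\pm1)$. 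Hence no side condition other than $\gcd(n+2k,q-1)=1$ survives into the final statement, matching the claim.

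There is no genuine obstacle here: all the substance lives in Theorems 3.3 and 3.5 and in the four case analyses of Section 3, and the proof of Theorem \ref{zifin} itself is just the bookkeeping of pairing up Corollaries 3.8 and 3.9 (for even $m$) with Corollaries 3.12 and 3.13 (for odd $m$). One should merely keep in mind the standing convention that, for negative $k$, the exponents $3+k(q+1)$ and $6+k(q+1)$ are read modulo $q^2-1$, so that the displayed trinomials are well-defined functions on $\gf_{q^2}^\ast$ for every integer $k$.
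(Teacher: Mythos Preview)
Your proposal is correct and follows exactly the paper's approach: the paper itself obtains Theorem \ref{zifin} simply by ``Combining Corollaries 3.8, 3.9, 3.12 and 3.13,'' and you have supplied the bookkeeping that makes this combination explicit. One tiny remark: the auxiliary conditions $\gcd(n,q+1)=1$ (even $m$) and $\gcd(n,q-1)=1$ (odd $m$) are needed not to \emph{choose} $\gamma=\omega$ or $\delta=\omega$, but because they appear in the biconditionals of Theorems 3.3 and 3.5; your verification that they hold automatically for $n=3,6$ is nonetheless exactly what is required.
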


Take $k=-1,\,\, -2 $ in Theorem \ref{zifin}. Since $\alpha^{q^2-1}=1$ for any $\alpha\in\gf_{q^2}^\ast$, we obtain that
$$\alpha^{3-(q+1)}  \left(  \alpha^{3(q-1)}+ \alpha^{q-1}+1 \right)=\alpha^{q^2-1+3-(q+1)}  \left(  \alpha^{3(q-1)}+ \alpha^{q-1}+1 \right)=\alpha^q+\alpha^{2q-1}+ x^{q^2-q+1},$$
$$ \alpha^{3-2(q+1)}  \left( \alpha^{3(q-1)}+ \alpha^{q-1}+1 \right)=\alpha^{q^2-1+3-2(q+1)}  \left(  \alpha^{3(q-1)}+ \alpha^{q-1}+1 \right)=\alpha^{q-2} +  \alpha^{q^2-q-1}+ \alpha^{q^2-2q}, $$
$$\left(\alpha^q+\alpha^{2q-1}+ x^{q^2-q+1}\right)^q=\alpha+\alpha^{2q-1}+\alpha^{q^2-q+1}.$$
\begin{example} Let $m$ be a positive integer and $q=2^m$. Then the following polynomials
$$ f_1(x)=x+x^{2^{m+1}-1}+x^{2^{2m}-2^m+1}\quad \mbox{ and} \quad f_2(x)=x^{2^m-2} +  x^{2^{2m}-2^{m+1}}+ x^{2^{2m}-2^m-1}$$permute $\gf_{q^2}$.\end{example}
Let $b\in\gf_{q^2}^\ast$ and $a=b^{2(q-1)}$, then the order of $a$ divides $2^m+1$ and
$$g(x)=\frac{f_1(bx)}{b}=x+ax^{2^{m+1}-1}+a^{2^{m-1}}x^{2^{2m}-2^m+1}.$$
That is,  $g(x)$ is QM equivalent to $x+x^{2^{m+1}-1}+x^{2^{2m}-2^m+1} $.  Therefore we have proved that $x+ax^{2^{m+1}-1}+a^{2^{m-1}}x^{2^{2m}-2^m+1}$, where $a\in\gf_{2^{2m}}$ is any element of order dividing $2^m+1$, permutes $\gf_{2^{2m}}$.
This improves Theorem 4.9 in \cite{LQC15}.

\begin{remark} With the arguments in this section, we can obtain all permutation trinomials over $\gf_{2^m}$ in Zieve's paper \cite{Zi13}. We can also obtain many explicit permutation trinomials over $\gf_{2^m}$  from the preceding eight Corollaries. \end{remark}

\section{Proof of Conjecture 1.1}

In this section, we first prove Conjecture 1.1. Then we derive more explicit permutation trinomials by using Proposition \ref{pozi}.
By Proposition \ref{pozi}, Theorem \ref{zifin}, and  Theorems 3.1 and 3.4 in \cite{GS16},  we are ready to prove the following result.

\begin{lemma}\label{le41} Let $m>0$ be a positive integer and $q=2^m$. Then the following hold:

(i) The functions $\frac{x^3+x+1}{x^3+x^2+1}$, $\frac{x^3+x^2+1}{x^3+x+1}$,\, $\frac{x^6+x^2+1}{x^6+x^4+1}$ and $\frac{x^6+x^4+1}{x^6+x^2+1}$ permute $\mu_{q+1}$ in $\gf_{q^2}$.

(ii) The functions $\frac{x^4+x^3+x}{x^3+x+1}$ and $\frac{x^3+x+1}{x^4+x^3+x}$ permute $\mu_{q+1}$ in $\gf_{q^2}$ if and only if $\gcd(m,\,\, 3)=1$.

(iii) If $m$ is odd, then the  functions $\frac{x^5+x^4+x}{x^4+x+1}$ and $\frac{x^4+x+1}{x^5+x^4+x}$ permute $\mu_{q+1}$ in $\gf_{q^2}$\end{lemma}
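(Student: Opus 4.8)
The statement is a list of assertions that certain degree-$\le 5$ rational functions permute $\mu_{q+1}$, possibly under a coprimality hypothesis. My strategy is to exploit the factored forms of the permutation trinomials established earlier (Theorem \ref{zifin} and the eight corollaries) together with Lemma \ref{lezi}/Remark \ref{rezi}. Recall that for $f(x)=x^{r}h\!\left(x^{q-1}\right)$ over $\gf_{q^2}$ (so $s=q+1$, $(q^2-1)/s=q-1$), Lemma \ref{lezi} says $f$ permutes $\gf_{q^2}$ iff $\gcd(r,q-1)=1$ and $x^{r}h(x)^{q-1}$ permutes $\mu_{q+1}$; and raising to the $(q-1)$-th power on $\mu_{q+1}$ is the same as inverting, since $\alpha^{q-1}=\alpha^{-2}$ for $\alpha\in\mu_{q+1}$... more precisely $\alpha^q=\alpha^{-1}$, so $h(\alpha)^{q-1}=\bar h(\alpha^{-1})/h(\alpha)$ where $\bar h$ is $h$ with conjugated (i.e.\ $q$-th-power) coefficients. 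For the trinomials in question the coefficients all lie in $\gf_2$ or in $\mu_{q+1}$, so these conjugations are transparent. Thus each corollary, read through Lemma \ref{lezi}, directly asserts that some explicit $x^{r}h(x)^{q-1}$ permutes $\mu_{q+1}$, and a short computation rewrites $x^{r}h(x)^{q-1}$ as the rational function in the statement.

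Concretely, for part (i): from Theorem \ref{zifin}, $f(x)=x^{3}\!\left(x^{3(q-1)}+x^{q-1}+1\right)$ permutes $\gf_{q^2}^\ast$ for suitable $k$; taking $h(x)=x^{3}+x+1$, $r=3$, Lemma \ref{lezi} gives that $x^{3}h(x)^{q-1}$ permutes $\mu_{q+1}$. On $\mu_{q+1}$, writing $x^{-1}=x^{q}=\bar x$ and using that $h$ has $\gf_2$-coefficients so $h(x)^{q}=h(\bar x)=h(x^{-1})=x^{-3}(1+x^2+x^3)=x^{-3}h(x)$ after reversing... I would compute $x^{3}h(x)^{q-1}=x^{3}\cdot h(x)^{q}/h(x)=x^{3}\cdot x^{-3}(x^3+x^2+1)/(x^3+x+1)=(x^3+x^2+1)/(x^3+x+1)$, which is exactly one of the claimed functions. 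Its compositional inverse on $\mu_{q+1}$ is then also a bijection, giving $(x^3+x+1)/(x^3+x^2+1)$; and the degree-$6$ pair comes identically from the second trinomial in Theorem \ref{zifin} with $h(x)=x^{6}+x^{4}+1$. For part (ii), I would use Corollary 3.10 (the case $n=2^s+2^t$ with $2\mid s$... actually the $n=4$ case, $n=2^0+2^2$? — I would locate the corollary giving $x^{n+k(q+1)}(x^{n(q-1)}+x^{2^s(q-1)}+1)$ with $n=4$), whose validity is conditioned on $\gcd(n,q+1)=1$, equivalently $\gcd(m,3)=1$; the trinomial $x^4+x^3+x$ and the corresponding $h$ yield $(x^4+x^3+x)/(x^3+x+1)$ after the same $x\mapsto\bar x$ reduction, and the iff is inherited from the iff in Lemma \ref{lezi}. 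Part (iii) likewise comes from the $n=5=2^0+2^2$ corollary (Corollary 3.7 or its Case-4 analog) with $m$ odd.

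The bulk of the work is bookkeeping: for each of the three items, identify the exact corollary, pin down $h$, $r$, and the value of $n$ (hence which $2^s(q-1)$ exponents appear), and carry out the reduction $x^{r}h(x)^{q-1}\mapsto$ (rational function) on $\mu_{q+1}$ using $x^{q}=x^{-1}$. A point deserving care: verifying that the coefficient conjugations are trivial — in the corollaries derived from Theorem \ref{thZiev2} one sometimes rescales by $\omega$ or $\omega^2$ (a cube root of unity), and since $3\mid q+1$ when $m$ is even but $3\nmid q^2-1$... wait, $3\mid q^2-1$ always; one must check $\omega\in\mu_{q+1}$ or track the scalar through Definition \ref{def}'s QM-equivalence — but scaling the argument by an element of $\mu_{q+1}$ and the output by a scalar does not affect the bijectivity of the induced map on $\mu_{q+1}$, so the scalars are harmless. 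The iff statements in (ii)–(iii) require that I also run the converse direction of Lemma \ref{lezi}: if the rational function permutes $\mu_{q+1}$ and $\gcd(r,q-1)=1$ is arranged, then the trinomial permutes, and conversely; the gcd side contributes the $\gcd(m,3)$ condition.

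I expect the main obstacle to be \textbf{correctly matching each rational function to the right trinomial and getting the $x\mapsto x^{-1}$ (numerator/denominator swap) bookkeeping exactly right}, in particular tracking whether one lands on $h(x)^{q}/h(x)$ or its reciprocal, so that the numerator and denominator are not accidentally interchanged; and secondarily, pinning the coprimality hypotheses (why $\gcd(m,3)=1$ corresponds precisely to $\gcd(n,q+1)=1$ for $n=4$, and why no extra condition is needed in (i)). Neither is deep — both reduce to the elementary fact $\gcd(2^s+2^t, 2^m\pm 1)$ computations plus the identity $x^{q}=x^{-1}$ on $\mu_{q+1}$ — but they are the places an error would hide.
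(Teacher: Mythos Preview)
Your approach to part (i) is correct and is exactly what the paper does: apply Theorem \ref{zifin} with $k=-1$ so that $\gcd(3+2k,q-1)=1$, then use Remark \ref{rezi}/Proposition \ref{pozi} to conclude that $x^{3}h(x)^{q-1}$ permutes $\mu_{q+1}$, and compute this to be $(x^{3}+x^{2}+1)/(x^{3}+x+1)$ via $x^{q}=x^{-1}$. The remaining three functions in (i) follow by taking inverses and squares, as you say. For part (iii), your route through Corollary 3.14 with $n=5=2^{0}+2^{2}$ (so $s=0$, $t=2$, both even, $m$ odd) is different from the paper --- the paper cites Theorem~3.4 of \cite{GS16} instead --- but your route does work: with $h(x)=x^{5}+x^{4}+x$ one gets $x^{5}h(x)^{q-1}=(x^{4}+x+1)/(x^{5}+x^{4}+x)$ on $\mu_{q+1}$, and Corollary 3.14 supplies the ``$m$ odd'' hypothesis with no extra gcd constraint on $q+1$.

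Part (ii), however, has a genuine gap. None of the eight corollaries in Section~3 produces the rational function $(x^{4}+x^{3}+x)/(x^{3}+x+1)$. The corollaries are all of the form $x^{n+k(q+1)}h(x^{q-1})$, so on $\mu_{q+1}$ they induce $x^{n}h(x)^{q-1}$ with $n\equiv r\pmod{q+1}$ \emph{fixed by the corollary}; for $h(x)=x^{3}+x+1$ the corresponding $n$ is $3$, giving $(x^{3}+x^{2}+1)/(x^{3}+x+1)$ as in (i), not the function in (ii), which is $x$ times that. Your tentative ``$n=4$'' does not occur ($4$ is not $2^{s}+2^{t}$ with $s<t$), and your tentative ``$n=2^{0}+2^{2}=5$'' with $h(x)=x^{5}+x+1$ yields $(x^{5}+x^{4}+1)/(x^{5}+x+1)$, again not (ii). More seriously, the arithmetic link you propose, ``$\gcd(n,q+1)=1$ equivalently $\gcd(m,3)=1$'', is false for both $n=4$ (always coprime, since $q+1$ is odd) and $n=5$ (coprime iff $m\not\equiv 2\pmod 4$). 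The condition $\gcd(m,3)=1$ simply does not arise from any of the Section~3 corollaries. The paper handles (ii) by invoking an \emph{external} input, Theorem~3.1 of \cite{GS16}, which states that $f_{1}(x)=x^{4}+x^{2^{m}+3}+x^{3\cdot 2^{m}+1}=x^{4}(1+x^{q-1}+x^{3(q-1)})$ is a PP over $\gf_{2^{2m}}$ iff $\gcd(m,3)=1$; from that, Proposition \ref{pozi} with $r=4$ and $h(x)=x^{3}+x+1$ gives exactly $(x^{4}+x^{3}+x)/(x^{3}+x+1)$ on $\mu_{q+1}$ with the stated iff. You will need either that citation or an independent argument establishing the Gupta--Sharma trinomial criterion; the internal corollaries alone do not suffice.
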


\begin{proof}(i) By Theorem \ref{zifin}, $f(x)=x^{3-(q+1)}  \left(  x^{3(q-1)}+ x^{q-1}+1 \right)$
permutes $\gf_{q^2}^\ast$ since $\gcd(3-2,\,\,q-1) = 1$. Since
$$g(\alpha)=\alpha^{3-(q+1)}(\alpha^3+\alpha+1)^{q-1}=\frac{\alpha^3+\alpha^2+1}{\alpha^3+\alpha+1},$$
here we have used the fact that $\alpha\in\mu_{q+1}$, i.e., $\alpha^{q+1}=1$, we deduce that $\frac{x^3+x^2+1}{x^3+x+1}$  permutes $\mu_{q+1}$ in $\gf_{q^2}$ by Proposition \ref{pozi}. Note that
$$\frac{x^3+x+1}{x^3+x^2+1}=\left(\frac{x^3+x^2+1}{x^3+x+1}\right)^{-1}, $$
$$ \frac{x^6+x^4+1}{x^6+x^2+1}=\left(\frac{x^3+x^2+1}{x^3+x+1}\right)^{2}, \quad \frac{x^6+x^2+1}{x^6+x^4+1}=\left(\frac{x^3+x^2+1}{x^3+x+1}\right)^{-2},$$
so $\frac{x^3+x^2+1}{x^3+x+1}$, $\frac{x^6+x^2+1}{x^6+x^4+1}$ and $\frac{x^6+x^4+1}{x^6+x^2+1}$  permute $\mu_{q+1}$.

 (ii) By  Theorem 3.1 of \cite{GS16}, the polynomial $f_1(x): =x^4+x^{2^m+3}+x^{3\cdot2^m+1}\in\gf_{2^{2m}}[x]$ is a PP over $\gf_{2^{2m}}$ if and only if $\gcd(3, m)=1$. Since
 $$g(\alpha)=\alpha^4(\alpha^3+\alpha+1)^{q-1}=\frac{\alpha^4+\alpha^3+\alpha}{\alpha^3+\alpha+1},\quad \frac{x^3+x+1}{x^4+x^3+x}=\left(\frac{x^4+x^3+x}{x^3+x+1}\right)^{-1}, $$
 so by Proposition \ref{pozi}, $\frac{x^4+x^3+x}{x^3+x+1}$ and $\frac{x^3+x+1}{x^4+x^3+x}$ permute $\mu_{q+1}$ in $\gf_{q^2}$ if and only if $\gcd(m,\,\, 3)=1$.

 (iii) The conclusion follows from  Theorem 3.4 of \cite{GS16} and the same argument as in the proof of (ii). This completes the proof.
 \end{proof}

Now we will use the lemma above and Proposition \ref{pozi} to prove Conjecture 1.1.

\begin{theorem}
Let $m>0$  and $k$ be integers, $q=2^m$. Then
$$f(x) := x^{5+k(q+1)}\left(1+x^{2^m-1}+x^{5\cdot(2^m-1)}\right)\in\gf_{2^{2m}}[x]$$ permutes $\gf_{2^{2m}}^\ast$ if and only if $\gcd(5+2k, 2^m-1)=1$ and $2|m$. In particular,  $$g(x) := x^5+x^{2^m+4}+x^{5\times2^m}\in\gf_{2^{2m}}[x]$$ is a permutation
trinomial over $\gf_{2^{2m}}$ if and only if $m\equiv2\pmod{4}$.\end{theorem}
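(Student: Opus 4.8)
The plan is to apply Proposition \ref{pozi} with $s = q+1$ and $(q-1)/s$ replaced by the appropriate quantity coming from the structure $x^r h(x^{q-1})$ over $\gf_{q^2}$, i.e. here $q^2 - 1 = (q-1)(q+1)$, $s = q+1$, so $(q^2-1)/s = q-1$. Writing $f(x) = x^{5 + k(q+1)}\bigl(1 + x^{2^m-1} + x^{5(2^m-1)}\bigr) = x^{5+k(q+1)} h\bigl(x^{q-1}\bigr)$ with $h(x) = 1 + x + x^5$, Proposition \ref{pozi} tells us that $f$ permutes $\gf_{q^2}^\ast$ if and only if two conditions hold: first, that $x^5 h(x)^{q-1}$ permutes $\mu_{q+1}$ (this is the ``fixed'' part, independent of $k$, coming from the base case $r = 5$, $k = 0$), and second, that $\gcd(5 + 2k,\, q-1) = 1$. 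So the crux is to show that $x^5 h(x)^{q-1} = x^5 (1 + x + x^5)^{q-1}$ permutes $\mu_{q+1}$ if and only if $2 \mid m$.

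First I would reduce the map $x^5 (1+x+x^5)^{q-1}$ on $\mu_{q+1}$ to a rational function. For $\alpha \in \mu_{q+1}$ we have $\alpha^{q+1} = 1$, hence $\bar{\alpha} = \alpha^q = \alpha^{-1}$, and therefore
\begin{equation*}
\alpha^5 (1 + \alpha + \alpha^5)^{q-1} = \alpha^5 \cdot \frac{(1+\alpha+\alpha^5)^q}{1+\alpha+\alpha^5} = \alpha^5 \cdot \frac{1 + \alpha^{-1} + \alpha^{-5}}{1 + \alpha + \alpha^5} = \frac{\alpha^5 + \alpha^4 + 1}{\alpha^5 + \alpha^4 + 1}\cdot(\cdots),
\end{equation*}
so after clearing denominators one sees that $x^5(1+x+x^5)^{q-1}$ acts on $\mu_{q+1}$ as $\dfrac{x^5 + x^4 + x}{x^4 + x + 1}$ (multiplying numerator and denominator of $\alpha^5 \cdot \frac{\alpha^{-5}(\alpha^5 + \alpha^4 + 1)}{\alpha^5 + \alpha^4 + 1}$ appropriately). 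This is exactly the function appearing in Lemma \ref{le41}(iii), which was shown — via Theorem 3.4 of \cite{GS16} — to permute $\mu_{q+1}$ when $m$ is odd. But Conjecture 1.1 claims the permutation property holds precisely when $m \equiv 2 \pmod 4$, i.e. $m$ even; so in fact I would connect the map on $\mu_{q+1}$ to Corollary 3.11 (Case 4, second case) or more directly to the relevant computation in the first case / Case 4 of Section 3 that gave the trinomial $x^{n(q-1)} + x^{2^t(q-1)} + x^{2^s(q-1)}$ with $n = 5 = 2^0 + 2^2$, $s = 0$, $t = 2$. Indeed Corollary 3.7 states that $x^{5 + k(q+1)}(x^{5(q-1)} + x^{4(q-1)} + x^{q-1})$ permutes $\gf_{q^2}^\ast$ iff $\gcd(5+2k, q-1) = 1$ and $4 \mid m$. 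I would therefore relate the given $f(x)$ to this polynomial: note $x^{5(q-1)} + x^{4(q-1)} + x^{q-1} = x^{q-1}(x^{4(q-1)} + x^{3(q-1)} + 1) = x^{q-1}\, \tilde h(x^{q-1})$, and $f(x) = x^{5+k(q+1)}(1 + x^{q-1} + x^{5(q-1)})$; the key observation is that $1 + t + t^5$ and $t + t^4 + t^5 = t(1 + t^3 + t^4)$ are related by $t \mapsto 1/t$ up to a power of $t$, which on $\mu_{q+1}$ corresponds exactly to the conjugation $\alpha \mapsto \alpha^{-1} = \bar\alpha$. So $x^5 h(x)^{q-1}$ on $\mu_{q+1}$ is, up to composition with the bijection $x \mapsto \beta/x$ of Lemma \ref{leZi1}(i) and scalar adjustments, the same as the map induced by $x^5(x^{5(q-1)}+x^{4(q-1)}+x^{q-1})/x^{5(q-1)}$, whose permutation behavior is governed by Corollary 3.7.

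The cleanest route, then, is: (1) reduce $f(x) = x^{5+k(q+1)} h(x^{q-1})$, $h = 1 + x + x^5$, to the form covered by Proposition \ref{pozi}, isolating the $k$-independent condition ``$x^5 h(x)^{q-1}$ permutes $\mu_{q+1}$'' and the $k$-dependent condition $\gcd(5+2k, q-1) = 1$; (2) by the conjugation trick $\alpha \mapsto \bar\alpha = \alpha^{-1}$ on $\mu_{q+1}$, identify this $\mu_{q+1}$-map with (a scalar multiple of composition with a degree-one bijection of) the $\mu_{q+1}$-map induced by the Case-4 trinomial $x^{5(q-1)} + x^{4(q-1)} + x^{q-1}$ from Corollary 3.7 / Corollary 3.11, whose permutation property on $\mu_{q+1}$ was established via Lemma \ref{lezi} and holds iff $4 \mid m$; (3) conclude that $f$ permutes $\gf_{2^{2m}}^\ast$ iff $\gcd(5+2k, 2^m-1) = 1$ and $4 \mid m$ — and since $4 \mid m$ forces $m$ even hence $\gcd(2, 2^m - 1) = 1$, the condition $\gcd(5+2k, 2^m-1)=1$ is automatically one congruence class; in particular $2 \mid m$ is \emph{necessary}. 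Wait — I should double-check: the theorem as stated says ``$\gcd(5+2k, 2^m-1)=1$ and $2 \mid m$'', not $4 \mid m$. So the correct reduction must be that $x^5 h(x)^{q-1}$ permutes $\mu_{q+1}$ iff $2 \mid m$ (not $4 \mid m$), which aligns with Conjecture 1.1 ($m \equiv 2 \pmod 4$ together with $\gcd(5, 2^m-1) = 1$, the latter holding iff $4 \nmid m$). Hence for the specialization to $g(x)$: taking $k = 0$ gives $f(x) = x^5 h(x^{q-1})$, which equals $g(x) = x^5 + x^{2^m+4} + x^{5\cdot 2^m}$ modulo $x^{q^2} - x$ on $\gf_{q^2}^\ast$ (since $x^5 \cdot x^{2^m-1} = x^{2^m+4}$ and $x^5 \cdot x^{5(2^m-1)} = x^{5\cdot 2^m}$), and the condition becomes $\gcd(5, 2^m - 1) = 1$ and $2 \mid m$; the first is equivalent to $4 \nmid m$ (as $5 \mid 2^m - 1 \iff 4 \mid m$), so jointly $m \equiv 2 \pmod 4$, which is exactly Conjecture 1.1.

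\textbf{Main obstacle.} The delicate point is step (2): pinning down \emph{exactly} when the $k$-independent rational map on $\mu_{q+1}$ induced by $x^5(1+x+x^5)^{q-1}$ is a bijection — getting the parity condition right ($2 \mid m$, as opposed to $4 \mid m$ or ``$m$ odd''). One must be careful to distinguish this map from the superficially similar ones in Lemma \ref{le41}(ii),(iii); the three fractions $\tfrac{x^4+x^3+x}{x^3+x+1}$, $\tfrac{x^5+x^4+x}{x^4+x+1}$, and the one relevant here have different numerator/denominator degrees, and only a careful tracking of which Corollary (3.7 vs.\ 3.11, i.e.\ $m$ even vs.\ $m$ odd, and whether $\gcd(n, q+1) = 1$ is additionally required) supplies the precise constraint. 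I would handle this by writing $n = 5$, $s = 0$, $t = 2$, noting $\gcd(5, q+1) = 1 \iff 3 \nmid m$ and $5 \nmid q+1$ always (indeed $5 \mid 2^m + 1 \iff m \equiv 2 \pmod 4$... so actually $\gcd(5, q+1) = 1 \iff m \not\equiv 2 \pmod 4$), and then reconciling all the divisibility conditions. The rest — verifying the $\gcd(5+2k, 2^m-1) = 1$ condition survives the reduction, and the final congruence bookkeeping $5 \mid 2^m - 1 \iff 4 \mid m$ — is routine.
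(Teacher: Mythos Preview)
Your reduction via Proposition \ref{pozi} to the two conditions $\gcd(5+2k,\,q-1)=1$ and ``$x^5(1+x+x^5)^{q-1}$ permutes $\mu_{q+1}$'' is correct and matches the paper. But your treatment of the second condition has a real gap.

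First, the computation of the rational map on $\mu_{q+1}$ is garbled: you write $\frac{\alpha^5+\alpha^4+1}{\alpha^5+\alpha^4+1}\cdot(\cdots)$ and then assert the map equals $\frac{x^5+x^4+x}{x^4+x+1}$. The correct simplification is
\[
\alpha^5\cdot\frac{1+\alpha^{-1}+\alpha^{-5}}{1+\alpha+\alpha^5}=\frac{\alpha^5+\alpha^4+1}{\alpha^5+\alpha+1},
\]
which is \emph{not} the function in Lemma \ref{le41}(iii). Your subsequent attempt to route through Corollaries 3.7 or 3.11 (taking $n=5$, $s=0$, $t=2$) cannot yield the condition $2\mid m$: Corollary 3.7 requires $4\mid m$ and Corollary 3.11 requires $m$ odd, so neither one --- nor the conjugation $\alpha\mapsto\alpha^{-1}$ linking them --- produces ``$m$ even''. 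You correctly sensed this obstacle but did not resolve it.

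The missing idea, which the paper supplies, is the factorization over $\gf_2$:
\[
x^5+x^4+1=(x^2+x+1)(x^3+x+1),\qquad x^5+x+1=(x^2+x+1)(x^3+x^2+1).
\]
When $m$ is odd, $3\mid q+1$, so a primitive cube root $\omega$ lies in $\mu_{q+1}$ and $h(\omega)=1+\omega+\omega^5=1+\omega+\omega^2=0$; the map is then not a bijection. When $m$ is even, $x^2+x+1$ has no root in $\mu_{q+1}$, the common factor cancels, and the map on $\mu_{q+1}$ coincides with $\dfrac{x^3+x+1}{x^3+x^2+1}$, which by Lemma \ref{le41}(i) permutes $\mu_{q+1}$ for \emph{every} $m$. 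That is precisely where the condition $2\mid m$ comes from --- it is exactly the condition that $x^2+x+1$ be nonvanishing on $\mu_{q+1}$ --- and after cancellation the reduced degree-$3$ fraction handles all even $m$ uniformly via Theorem \ref{zifin}. Your final bookkeeping ($\gcd(5,2^m-1)=1\iff 4\nmid m$, hence $m\equiv 2\pmod 4$ for $g(x)$) is fine once this is in place.
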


\begin{proof} The trinomial
$f(x) := x^{5+k(q+1)}(1+x^{2^m-1}+x^{5\cdot(2^m-1)})$ can be written as $$f(x)=x^{5+k(q+1)}h(x^{2^m-1}),$$
where $h(x):=1+x+x^5\in\gf_{2^{2m}}[x]$. By Remark \ref{rezi}, $f(x)$ permutes $\gf_{2^{2m}}^\ast$ if and only if $\gcd(5+k(q+1), q-1)=\gcd(5+2k, q-1)=1$ and $g(x):=x^{5+k(q+1)}h(x)^{2^m-1}$ permutes $\mu_{q+1}$.

If $m$ is odd, then $3|(q+1)$. Take $x=\omega$ to be the primitive cubic root of unity, then $\omega\in \mu_{q+1}$ and $g(\omega)=0$. If $2|m$, then $\gcd(3, q+1)=1$, which implies that $x^2+x+1\ne0$ for any $x\in\mu_{q+1}$. For $\alpha\in\mu_{q+1}$,
\begin{eqnarray*}g(\alpha)&=&\alpha^{5+k(q+1)}(1+\alpha+\alpha^5)^{q-1}\\
&=&\alpha^{5}\frac{(1+\alpha+\alpha^5)^{2^m}}{1+\alpha+\alpha^5}\\
&=&\alpha^{5}\frac{1+\alpha^{-1}+\alpha^{-5}}{1+\alpha+\alpha^5}\\
&=&\frac{1+\alpha^4+\alpha^5}{1+\alpha+\alpha^5}\\
&=&\frac{(1+\alpha+\alpha^2)(1+\alpha+\alpha^3)}{(1+\alpha+\alpha^2)(1+\alpha^2+\alpha^3)}\\
&=&\frac{1+\alpha+\alpha^3}{1+\alpha^2+\alpha^3}.\end{eqnarray*}
Since $G(x):=\frac{x^3+x+1}{x^3+x^2+1}$ permutes $\mu_{q+1}$ in $\gf_{q^2}$ by Lemma \ref{le41} (i), so by Proposition \ref{pozi} and Lemma 2.1, $f(x)$ permutes $\gf_{2^{2m}}^\ast$ if and only if $\gcd(5+2k,\,\, 2^m-1)=1$ and $2|m$. Since $\gcd(5,\,\, 2^m-1)=1$ only when $4\not|m$,  $g(x) := x^5+x^{2^m+4}+x^{5\times2^m}\in\gf_{2^{2m}}[x]$ is a permutation
trinomial over $\gf_{2^{2m}}$ if and only if $m\equiv2\pmod{4}$. This completes the proof.
\end{proof}

Let $k=-2$, then $\gcd(5-2\times2,\,\, 2^m-1)=1$. For any $\alpha\in\gf_{2^{2m}}^\ast$,
\begin{eqnarray*}f(\alpha)&=&\alpha^{5-2(q+1)}\left(1+\alpha^{2^m-1}+\alpha^{5\cdot(2^m-1)}\right)\\
&=&\alpha^{q^2-1+5-2(q+1)}\left(1+\alpha^{2^m-1}+\alpha^{5\cdot(2^m-1)}\right)\\
&=&\alpha^{3\cdot2^m-3}+\alpha^{2^{2m}-2^m+1}+\alpha^{2^{2m}-2^{m+1}+2}.\end{eqnarray*}
Therefore, we have the following corollary.

\begin{corollary}Let $m$ be a positive integer, $q=2^m$. Then
$$f(x) :=x^{3\cdot2^m-2}+x^{2^{2m}-2^m+1}+x^{2^{2m}-2^{m+1}+2} \in\gf_{2^{2m}}[x]$$ permutes $\gf_{2^{2m}}$ if and only if $2|m$ and $m>2$.\end{corollary}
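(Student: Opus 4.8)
The plan is to obtain the corollary as a direct specialization of Theorem~4.2 (the theorem immediately preceding it). First I would set $k=-2$ in the statement of Theorem~4.2, so that $f(x) = x^{5-2(q+1)}\left(1+x^{2^m-1}+x^{5(2^m-1)}\right)$, and check that the exponent condition $\gcd(5+2k,\,2^m-1)=\gcd(1,\,2^m-1)=1$ holds automatically for every $m$. Thus, by Theorem~4.2, this $f(x)$ permutes $\gf_{2^{2m}}^\ast$ if and only if $2\mid m$. By Lemma~2.1, permuting $\gf_{2^{2m}}^\ast$ is equivalent to permuting $\gf_{2^{2m}}$ since $f(0)=0$.

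Next I would rewrite $f(x)$ as an honest trinomial by reducing exponents modulo $q^2-1=2^{2m}-1$ on $\gf_{2^{2m}}^\ast$, exactly as carried out in the displayed computation just before the corollary: multiply through by $\alpha^{q^2-1}=1$, so that $\alpha^{5-2(q+1)}\left(1+\alpha^{2^m-1}+\alpha^{5(2^m-1)}\right) = \alpha^{3\cdot 2^m-3}+\alpha^{2^{2m}-2^m+1}+\alpha^{2^{2m}-2^{m+1}+2}$. One should double-check that the three resulting exponents $3\cdot2^m-2$, $2^{2m}-2^m+1$, $2^{2m}-2^{m+1}+2$ are pairwise distinct modulo $2^{2m}-1$ for $m>1$, so that the expression really is a trinomial and not an accidental binomial or monomial; this is a short arithmetic verification. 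Hence the trinomial in the corollary agrees as a function on $\gf_{2^{2m}}$ with the $k=-2$ instance of $f$, and therefore permutes $\gf_{2^{2m}}$ precisely when $2\mid m$.

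The only remaining point is the extra restriction $m>2$ in the corollary, as opposed to just $2\mid m$. Here I would note that when $m=2$ the three exponents, reduced modulo $2^4-1=15$, collapse: $3\cdot 4-2=10$, $2^4-2^2+1=13$, $2^4-2^3+2=10$, so two of them coincide and the claimed trinomial degenerates (it is not a genuine trinomial), which is why the case $m=2$ must be excluded from the statement. For $m>2$ with $2\mid m$ the exponents are genuinely distinct and Theorem~4.2 gives the permutation property; for odd $m$ Theorem~4.2 already shows $f$ is not a permutation. This degeneracy check at $m=2$ is the one place needing care, but it is an entirely elementary computation; the substantive content is inherited from Theorem~4.2, so there is no real obstacle beyond bookkeeping of exponents.
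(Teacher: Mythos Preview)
Your approach is essentially identical to the paper's: the paper also specializes Theorem~4.2 at $k=-2$, observes that $\gcd(5-4,\,2^m-1)=1$ for all $m$, and performs exactly the exponent reduction you describe to reach the trinomial of the corollary. Your explicit check that the three exponents collide when $m=2$ (so the expression degenerates to a monomial rather than a genuine trinomial) is the correct explanation for the extra hypothesis $m>2$; the paper leaves this point implicit.
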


Similarly, we have the following theorems.

\begin{theorem} Let $m>0$  and $k$ be integers, $q=2^m$. Then
$$f(x) := x^{5+k(q+1)}\left(1+x^{4\cdot(2^m-1)}+x^{5\cdot(2^m-1)}\right)\in\gf_{2^{2m}}[x]$$
permutes $\gf_{2^{2m}}^\ast$ if and only if $\gcd(5+2k,\,\, 2^m-1)=1$ and $2|m$. In particular,
$$g(x) := x^5+x^{4\cdot2^m+1}+x^{5\cdot2^m}\in\gf_{2^{2m}}[x]$$ is a permutation
trinomial over $\gf_{2^{2m}}$ if and only if $m\equiv2\pmod{4}$.\end{theorem}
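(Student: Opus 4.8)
The plan is to mirror the proof of the preceding Theorem 4.2 almost verbatim, replacing the trinomial $1+x+x^5$ by $1+x^4+x^5$. First I would write $f(x)=x^{5+k(q+1)}h\left(x^{2^m-1}\right)$ with $h(x):=1+x^4+x^5\in\gf_{2^{2m}}[x]$, so that $h$ has degree $5$ and $s=q+1$, $(q^2-1)/s=q-1$. By Remark \ref{rezi}, $f$ permutes $\gf_{2^{2m}}^\ast$ if and only if $\gcd\bigl(5+k(q+1),\,q-1\bigr)=\gcd(5+2k,\,q-1)=1$ and $g(x):=x^{5+k(q+1)}h(x)^{q-1}$ permutes $\mu_{q+1}$.

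Next I would dispose of the odd-$m$ case: when $2\nmid m$ we have $3\mid q+1$, so a primitive cube root of unity $\omega$ lies in $\mu_{q+1}$; since $1+\omega^4+\omega^5=1+\omega+\omega^2=0$, we get $g(\omega)=0=g(1)$ (as $h(1)=1+1+1=1\ne0$ — so actually I need $g(1)=1^{5+k(q+1)}\cdot 1^{q-1}=1$; instead the obstruction is $g(\omega)=0$ which is not a $(q+1)$-th root of unity), so $g$ cannot permute $\mu_{q+1}$ and $f$ is not a PP. When $2\mid m$ we have $\gcd(3,q+1)=1$, hence $x^2+x+1\ne 0$ for all $x\in\mu_{q+1}$. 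Then for $\alpha\in\mu_{q+1}$, using $\bar\alpha=\alpha^{2^m}=\alpha^{-1}$, I compute
\begin{eqnarray*}
g(\alpha)&=&\alpha^{5}\,\frac{(1+\alpha^4+\alpha^5)^{2^m}}{1+\alpha^4+\alpha^5}
=\alpha^{5}\,\frac{1+\alpha^{-4}+\alpha^{-5}}{1+\alpha^4+\alpha^5}
=\frac{\alpha^5+\alpha+1}{1+\alpha^4+\alpha^5}.
\end{eqnarray*}
The key algebraic step is the factorization $1+\alpha+\alpha^5=(1+\alpha+\alpha^2)(1+\alpha+\alpha^3)$ and $1+\alpha^4+\alpha^5=(1+\alpha+\alpha^2)(1+\alpha^2+\alpha^3)$ over $\gf_2$, which (cancelling the nonzero factor $1+\alpha+\alpha^2$) gives $g(\alpha)=\dfrac{\alpha^3+\alpha+1}{\alpha^3+\alpha^2+1}$ — exactly the function $G(x)$ from Lemma \ref{le41}(i), which permutes $\mu_{q+1}$ in $\gf_{q^2}$. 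By Proposition \ref{pozi} and Lemma 2.1, $f$ permutes $\gf_{2^{2m}}^\ast$ iff $\gcd(5+2k,\,2^m-1)=1$ and $2\mid m$.

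Finally, for the ``in particular'' statement I would take $k=0$ so that $f(x)=x^5h(x^{2^m-1})=x^5+x^{5+4(2^m-1)}+x^{5+5(2^m-1)}=x^5+x^{4\cdot2^m+1}+x^{5\cdot2^m}=g(x)$, note $f(0)=0$ so Lemma 2.1 upgrades ``permutes $\gf_{2^{2m}}^\ast$'' to ``permutes $\gf_{2^{2m}}$'', and observe that $\gcd(5,\,2^m-1)=1$ holds precisely when $5\nmid 2^m-1$, i.e. when $4\nmid m$ (since $\mathrm{ord}_5(2)=4$); combined with $2\mid m$ this is exactly $m\equiv2\pmod4$. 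The only real obstacle is verifying the two polynomial factorizations over $\gf_2$ and confirming that the common factor $1+x+x^2$ is the same in both — a short finite computation — so that the quotient collapses to the already-treated function $G$; everything else is a direct transcription of the Theorem 4.2 argument.
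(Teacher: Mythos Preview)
Your approach is correct and is precisely the argument the paper intends when it writes ``Similarly'': reduce via Remark~\ref{rezi}, rule out odd $m$ by the root $\omega$, and for even $m$ cancel the common factor $1+x+x^2$ to land on a rational function covered by Lemma~\ref{le41}(i). One small slip: you have the two factorizations interchanged --- in fact $1+x^4+x^5=(1+x+x^2)(1+x+x^3)$ while $1+x+x^5=(1+x+x^2)(1+x^2+x^3)$ over $\gf_2$ (cf.\ the displayed computation in the proof of Theorem~4.2) --- so after cancellation one actually obtains $g(\alpha)=\dfrac{\alpha^3+\alpha^2+1}{\alpha^3+\alpha+1}$, the reciprocal of the $G$ you named; since Lemma~\ref{le41}(i) asserts that this reciprocal also permutes $\mu_{q+1}$, the conclusion is unaffected.
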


\begin{theorem} Let $m>0$  and $k$ be integers, $q=2^m$. Then
$$f(x) := x^{4+k(q+1)}\left(1+x^{(2^m-1)}+x^{3\cdot(2^m-1)}\right)\in\gf_{2^{2m}}[x]$$ permutes $\gf_{2^{2m}}^\ast$ if and only if $\gcd(4+2k, 2^m-1)=1$ and $3\not|m$. In particular, $$g(x) := x^2+x^{2^{m+1}}+x^{2^{2m}-2^m+ 2}\in\gf_{2^{2m}}[x]$$ is a  permutation
trinomial over $\gf_{2^{2m}}$ if and only if $\gcd(3, \,\, m)=1$.\end{theorem}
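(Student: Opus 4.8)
The plan is to follow the same route as the proof of Theorem 4.2, reducing the claim to Lemma \ref{le41}(ii) by means of Remark \ref{rezi}. First I would write
$$f(x) = x^{4+k(q+1)}\,h\!\left(x^{2^m-1}\right), \qquad h(x) := 1+x+x^{3}\in\gf_{2^{2m}}[x],$$
and apply Remark \ref{rezi} over $\gf_{q^{2}}$ with $s=q+1$ (note $s\mid q^{2}-1$ and $(q^{2}-1)/s=q-1$). This shows that $f$ permutes $\gf_{2^{2m}}^{\ast}$ if and only if $\gcd\!\bigl(4+k(q+1),\,q-1\bigr)=\gcd(4+2k,\,q-1)=1$ (using $q+1\equiv 2\pmod{q-1}$) and $G(x):=x^{4+k(q+1)}h(x)^{q-1}$ permutes $\mu_{q+1}$.

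The next step is to evaluate $G$ on $\mu_{q+1}$. For $\alpha\in\mu_{q+1}$ one has $\alpha^{k(q+1)}=1$ and $\alpha^{q}=\alpha^{-1}$, hence $h(\alpha)^{q}=1+\alpha^{-1}+\alpha^{-3}=\alpha^{-3}\bigl(\alpha^{3}+\alpha^{2}+1\bigr)$, and therefore
$$G(\alpha)=\alpha^{4}\,\frac{h(\alpha)^{q}}{h(\alpha)}=\alpha\cdot\frac{\alpha^{3}+\alpha^{2}+1}{\alpha^{3}+\alpha+1}=\frac{\alpha^{4}+\alpha^{3}+\alpha}{\alpha^{3}+\alpha+1}.$$
By Lemma \ref{le41}(ii) the function $\tfrac{x^{4}+x^{3}+x}{x^{3}+x+1}$ permutes $\mu_{q+1}$ if and only if $\gcd(m,3)=1$; combining this with the equivalence of the previous paragraph and with Lemma 2.1 yields the first assertion of the theorem.

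For the ``in particular'' statement I would specialize to $k=-1$. Then $\gcd(4+2k,\,2^{m}-1)=\gcd(2,\,2^{m}-1)=1$ holds automatically, so $f(x)=x^{3-2^{m}}\bigl(1+x^{2^{m}-1}+x^{3(2^{m}-1)}\bigr)$ permutes $\gf_{2^{2m}}^{\ast}$ if and only if $3\nmid m$. Expanding gives $f(x)=x^{3-2^{m}}+x^{2}+x^{2^{m+1}}$, and reducing the exponent $3-2^{m}$ modulo $q^{2}-1$ (legitimate since $\alpha^{q^{2}-1}=1$ for $\alpha\in\gf_{2^{2m}}^{\ast}$) identifies this map on $\gf_{2^{2m}}^{\ast}$ with $g(x)=x^{2}+x^{2^{m+1}}+x^{2^{2m}-2^{m}+2}$. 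Since $g(0)=0$, Lemma 2.1 then shows $g$ permutes $\gf_{2^{2m}}$ exactly when $3\nmid m$, i.e. when $\gcd(3,m)=1$.

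The argument is essentially routine once Lemma \ref{le41}(ii) is in hand; the only points needing a little care are the bookkeeping in the application of Remark \ref{rezi} (checking $s=q+1\mid q^{2}-1$ and $q+1\equiv2\pmod{q-1}$) and, in the last part, the exponent reduction that turns the parametrized trinomial at $k=-1$ into precisely $g$. In contrast to Theorem 4.2, here $h$ has no root in $\mu_{q+1}$ — for a primitive cube root of unity $\omega$ one has $h(\omega)=1+\omega+\omega^{3}=\omega\neq0$ — so no extra parity restriction on $m$ is forced, and I do not expect any genuine obstacle.
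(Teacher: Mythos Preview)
Your argument is correct and follows exactly the route the paper indicates by ``Similarly'': write $f(x)=x^{4+k(q+1)}h(x^{q-1})$ with $h(x)=1+x+x^3$, apply Remark~\ref{rezi}, compute $G(\alpha)=\dfrac{\alpha^4+\alpha^3+\alpha}{\alpha^3+\alpha+1}$ on $\mu_{q+1}$, and invoke Lemma~\ref{le41}(ii); the specialization $k=-1$ together with the exponent reduction modulo $q^2-1$ then gives the ``in particular'' statement just as you do. One minor remark: your closing comment only checks $h(\omega)\neq 0$ for a primitive cube root $\omega$, which by itself does not rule out roots of $h$ in $\mu_{q+1}$; the clean justification is that $x^3+x+1$ is irreducible over $\gf_2$ with roots of order~$7$, and $7\nmid 2^m+1$ for every $m$, so $h$ never vanishes on $\mu_{q+1}$ (this is in any case implicit in Lemma~\ref{le41}(ii)).
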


\begin{theorem}Let $m>0$  and $k$ be integers, $q=2^m$. Then
$$f(x) := x^{2+k(q+1)}\left(1+x^{2\cdot(2^m-1)}+x^{3\cdot(2^m-1)}\right)\in\gf_{2^{2m}}[x]$$ permutes $\gf_{2^{2m}}^\ast$ if and only if $\gcd(2+2k,\,\, 2^m-1)=1$ and $3\not|m$. In particular, $$g(x) := x^{2^m+3}+x^{3\cdot2^m+1}+x^{4\cdot2^m}\in\gf_{2^{2m}}[x]$$ is a permutation
trinomial over $\gf_{2^{2m}}$ if and only if $\gcd(3, \,\, m)=1$.\end{theorem}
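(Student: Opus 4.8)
The plan is to follow the same route as in the preceding theorems: use Remark \ref{rezi} to reduce the permutation property of $f$ on $\gf_{2^{2m}}^\ast$ to that of an associated map on $\mu_{q+1}$, and then identify this map with one already treated in Lemma \ref{le41}. First I would write
$$f(x)=x^{2+k(q+1)}h\left(x^{2^m-1}\right),\qquad h(x):=1+x^{2}+x^{3}=x^{3}+x^{2}+1\in\gf_{2^{2m}}[x],$$
and note that $h$ is the reciprocal polynomial $x^{3}h_{0}(1/x)$ of the cubic $h_{0}(x)=1+x+x^{3}$ used in the previous theorem --- this is precisely why the two arguments run in parallel. Taking $s=q+1$ and $r=2+k(q+1)$, so that $(q^{2}-1)/s=q-1$ and $q+1\equiv 2\pmod{q-1}$, Remark \ref{rezi} gives that $f$ permutes $\gf_{2^{2m}}^\ast$ if and only if $\gcd(2+2k,\,q-1)=1$ and $\Phi(x):=x^{2+k(q+1)}h(x)^{q-1}$ permutes $\mu_{q+1}$.

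Next I would evaluate $\Phi$ on $\alpha\in\mu_{q+1}$. Since $\alpha^{q+1}=1$ one has $\alpha^{k(q+1)}=1$ and $\alpha^{q}=\alpha^{-1}$, hence $h(\alpha)^{q}=1+\alpha^{-2}+\alpha^{-3}$ and, after multiplying numerator and denominator by $\alpha^{3}$ and cancelling a common factor $\alpha$,
$$\Phi(\alpha)=\alpha^{2}\cdot\frac{h(\alpha)^{q}}{h(\alpha)}=\alpha^{2}\cdot\frac{1+\alpha^{-2}+\alpha^{-3}}{1+\alpha^{2}+\alpha^{3}}=\frac{\alpha^{3}+\alpha+1}{\alpha^{4}+\alpha^{3}+\alpha}.$$
To see that this is legitimate on all of $\mu_{q+1}$ I would check that neither $1+\alpha^{2}+\alpha^{3}$ nor $\alpha^{3}+\alpha+1$ vanishes for $\alpha\in\mu_{q+1}$: applying $x\mapsto x^{q}$ to either vanishing relation yields the other, and adding them forces $\alpha=\alpha^{2}$, i.e. $\alpha\in\{0,1\}$, and neither of these is a root. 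Therefore $\Phi$ agrees on $\mu_{q+1}$ with $\frac{x^{3}+x+1}{x^{4}+x^{3}+x}=\left(\frac{x^{4}+x^{3}+x}{x^{3}+x+1}\right)^{-1}$, which by Lemma \ref{le41}(ii) permutes $\mu_{q+1}$ if and only if $\gcd(m,3)=1$, i.e. $3\nmid m$. Combining this with the gcd condition from Remark \ref{rezi} proves the first assertion.

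Finally, for the ``in particular'' statement I would specialise to $k=1$: then
$$f(x)=x^{q+3}\left(1+x^{2q-2}+x^{3q-3}\right)=x^{q+3}+x^{3q+1}+x^{4q}=g(x),$$
and $\gcd(2+2\cdot 1,\,2^{m}-1)=\gcd(4,\,2^{m}-1)=1$ holds automatically because $2^{m}-1$ is odd. Since $g(0)=0$, Lemma 2.1 turns ``permutes $\gf_{2^{2m}}^\ast$'' into ``permutes $\gf_{2^{2m}}$,'' so $g$ is a permutation trinomial of $\gf_{2^{2m}}$ exactly when $\gcd(3,m)=1$. I do not anticipate a real obstacle here: the one substantive step is the simplification of $\Phi(\alpha)$ into the exact shape occurring in Lemma \ref{le41}(ii), together with the short non-vanishing verification; everything else is bookkeeping with exponents modulo $q^{2}-1$.
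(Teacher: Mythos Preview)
Your proof is correct and follows essentially the same route the paper indicates: the paper states this theorem under the heading ``Similarly, we have the following theorems,'' meaning it is to be proved exactly as Theorem 4.2 was, by applying Remark \ref{rezi} and identifying the induced map on $\mu_{q+1}$ with the function $\frac{x^{3}+x+1}{x^{4}+x^{3}+x}$ from Lemma \ref{le41}(ii). Your simplification of $\Phi(\alpha)$, the non-vanishing check for $h(\alpha)$ on $\mu_{q+1}$, and the specialisation $k=1$ (together with Lemma 2.1 to pass from $\gf_{2^{2m}}^{\ast}$ to $\gf_{2^{2m}}$) are all accurate.
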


\section{The QM equivalence of some known permutation trinomials}

Let $m>1$ be an odd positive integer, and put $k=\frac{m+1}{2}$. Then we can prove the following congruences easily
$$(2^k-1)(2^k+1)\equiv1\pmod{2^m-1},\quad (2^k+2)(2^k-1)\equiv 2^k\pmod{2^m-1},$$
$$(2^k-2)(2^{k-1}+1)\equiv-1\pmod{2^m-1},\quad 2^k(2^{k-1}+1)\equiv 2^k+1\pmod{2^m-1}.$$

By Proposition \ref{potri1}, $f(x)=x+x^{2^k-1}+x^{2^k+1}$ is a permutation polynomial over $\gf_{2^m}$. Hence for any $b\in\gf_{2^m}^\ast$,
 $$f_1(x)=\frac{1}{b}f(bx)=x+b^{2^k-2}x^{2^k-1}+b^{2^k}x^{2^k+1}$$
is a permutation polynomial over $\gf_{2^m}$. Let $b=u^{2^m-2^{k-1}-2}$, then
$$f_1(x)=x+ux^{2^k-1}+u^{2^m-2^k-2}x^{2^k+1}$$ is a permutation polynomial over $\gf_{2^m}$ for any $u\in\gf_{2^m}^\ast$, and thus we obtain  Theorem 4.10  of \cite{LQC15}; Let $b^2=u$, then
$$f_1(x)=x+u^{2^{k-1}-1}x^{2^k-1}+u^{2^{k-1}}x^{2^k+1}$$ is a permutation polynomial over $\gf_{2^m}$ for any $u\in\gf_{2^m}^\ast$, and hence we obtain  Theorem 5.1 of \cite{MZFG15}.
% Therefore we have proved that
%\begin{corollary}  Let $m>1$ be an odd integer, and write $k=\frac{m+1}{2}$. Then for each $u\in\gf_{2^m}^\ast$,  $f(x)=x+ux^{2^k-1}+u^{2^m-2^k-2}x^{2^k+1}$ is a permutation polynomial over $\gf_{2^m}^\ast$.\end{corollary}

Similarly,
 $$f_2(x)=f(x^{2^k+2})=ux^{2^k}+x^{2^k+2}+u^{2^m-2^k-2}x^{3\cdot2^k+4},$$
$$f_3(x)= f(x^{2^k+1})=ux+x^{2^k+1}+u^{2^m-2^k-2}x^{2^{k+1}+3}$$ and
$$f_4(x)=f(x^{2^k-1})=u^{2^m-2^k-2}x+x^{2^k-1}+ux^{2^m-2^{k+1}+2}$$
are also  permutation polynomials over $\gf_{2^m}$ for any $u\in\gf_{2^m}^\ast$.

\begin{remark} Note that $g(x)=f(x^{2^k-1})/u^{2^m-2^k-2}=x+u^{2^k+1}x^{2^k-1}+u^{2^k+2}x^{2^m-2^{k+1}+2}$. Letting $u=a^{2^k-1}$, we obtain that
$$g(x)=x+ax^{2^k-1}+a^{2^k}x^{2^m-2^{k+1}+2}$$ is a permutation polynomial over $\gf_{2^m}$  for any $a\in\gf_{2^m}^\ast$, therefore we get  Theorem 5.2 of \cite{MZFG15}.  \end{remark}

From the above arguments, we have the following result.

\begin{theorem} Let $m>1$ be an odd positive integer, and write $k=\frac{m+1}{2}$. Then $f(x)=x+x^{2^k-1}+x^{2^k+1}$,\,\, $f_1(x),\,\, f_2(x),\,\, f_3(x),\,\, f_4(x),\,\, g(x),\,\, h(x)=x^{2^k}+x^{2^k+2}+x^{3\cdot2^k+4}$ are permutation trinomials over $\gf_{2^m}$. Moreover, they are QM equivalent to each other. \end{theorem}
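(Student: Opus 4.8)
The plan is to build everything on the single permutation trinomial $f(x)=x+x^{2^k-1}+x^{2^k+1}$, which is already known to permute $\gf_{2^m}^\ast$ (hence $\gf_{2^m}$, since $f(0)=0$) by Proposition \ref{potri1}. The strategy is to obtain each of $f_1,\dots,f_4$, $g$, $h$ from $f$ by a quasi-multiplicative substitution $x\mapsto cx^d$ together with a scaling by a constant $a$, and then invoke the observation after Definition 2.2 that $af(cx^d)$ permutes $\gf_q^\ast$ whenever $\gcd(d,q-1)=1$, $a,c\in\gf_q^\ast$; by transitivity of QM equivalence they are then all QM equivalent to one another. The reductions modulo $2^m-1$ displayed at the start of Section~5 are exactly the exponent bookkeeping needed: on $\gf_{2^m}^\ast$ one has $x^{2^m-1}=1$, so exponents may be reduced mod $2^m-1$, and $\gcd(2^k-1,2^m-1)=1$, $\gcd(2^k+1,2^m-1)=1$, $\gcd(2^k+2,2^m-1)=1$ (the last because $(2^k+2)(2^{k-1}+1)\equiv 2^k$ and $2^k$ is a unit), so each substitution exponent $d\in\{1,\,2^k-1,\,2^k+1,\,2^k+2\}$ is coprime to $2^m-1$.

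Concretely I would proceed in the following steps. First, record that $f$ permutes $\gf_{2^m}$ (Proposition \ref{potri1} plus Lemma 2.1). Second, for $f_1(x)=\tfrac1b f(bx)$ with $b\in\gf_{2^m}^\ast$: this is the $d=1$, $c=b$, $a=b^{-1}$ case, so $f_1$ permutes $\gf_{2^m}^\ast$ and is QM equivalent to $f$; the specializations $b=u^{2^m-2^{k-1}-2}$ and $b^2=u$ just relabel the coefficients and recover Theorems 4.10 of \cite{LQC15} and 5.1 of \cite{MZFG15}. Third, for $f_2(x)=f(x^{2^k+2})$, $f_3(x)=f(x^{2^k+1})$, $f_4(x)=f(x^{2^k-1})$: each is $f(c x^d)$ with $c=1$ and $d\in\{2^k+2,2^k+1,2^k-1\}$ coprime to $2^m-1$, so each permutes $\gf_{2^m}^\ast$ and is QM equivalent to $f$. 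The only content here is verifying that, after reducing the exponents $d\cdot 1$, $d(2^k-1)$, $d(2^k+1)$ modulo $2^m-1$ using the four congruences displayed in Section~5, one indeed gets the stated trinomials — e.g. for $f_4$, $d=2^k-1$ gives exponents $2^k-1$, $(2^k-1)^2\equiv 1$ (using $(2^k-1)(2^k+1)\equiv1$ is not quite it; rather $(2^k-1)(2^k-1)$ must be computed, but $(2^k-2)(2^{k-1}+1)\equiv-1$ handles the companion term), and $(2^k-1)(2^k+1)\equiv1$ — I would simply check these three multiplications mod $2^m-1$ and read off $f_4(x)=u^{2^m-2^k-2}x+x^{2^k-1}+ux^{2^m-2^{k+1}+2}$. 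Fourth, for $g(x)=f(x^{2^k-1})/u^{2^m-2^k-2}=f_4(x)/u^{2^m-2^k-2}$: dividing by a nonzero constant preserves both the permutation property on $\gf_{2^m}^\ast$ and QM equivalence, and the substitution $u=a^{2^k-1}$ (legal since $\gcd(2^k-1,2^m-1)=1$ makes $u\mapsto u^{2^k-1}$ a bijection of $\gf_{2^m}^\ast$) rewrites $g$ in the form appearing in Theorem 5.2 of \cite{MZFG15}. Fifth, $h(x)=x^{2^k}+x^{2^k+2}+x^{3\cdot2^k+4}$ is exactly $f_2(x)$ with $u=1$ (or, more invariantly, $h(x)=f(x^{2^k+2})$ read off via the congruences $2^k(2^{k-1}+1)\equiv 2^k+1$ etc.), hence QM equivalent to $f$.

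Finally, I would assemble the conclusion: each of the seven listed polynomials has been exhibited as $a\,f(c x^d)$ (possibly after a further harmless coefficient substitution) for suitable $a,c\in\gf_{2^m}^\ast$ and $d$ coprime to $2^m-1$, so by the remark following Definition 2.2 each is a permutation trinomial over $\gf_{2^m}$, and since QM equivalence is reflexive, symmetric and transitive (all immediate from the definition), they are QM equivalent to each other. I do not expect a genuine obstacle; the one place demanding care is the exponent arithmetic modulo $2^m-1$ — making sure that each substituted trinomial, once its three exponents are reduced to the range $[0,2^m-1)$, literally matches the polynomial as written, and that none of the three terms accidentally collapse onto the same exponent (they do not, because $f$ has three distinct exponents $1,2^k-1,2^k+1$ and multiplication by a unit mod $2^m-1$ is injective). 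Everything else is bookkeeping already carried out in the displayed congruences at the top of Section~5.
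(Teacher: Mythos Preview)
Your proposal is correct and mirrors the paper's own argument, which is simply the discussion in Section~5 preceding the theorem: each of $f_1,\dots,f_4,g,h$ is obtained from $f$ (via Proposition~\ref{potri1}) by a substitution $x\mapsto cx^d$ with $\gcd(d,2^m-1)=1$ and a nonzero scalar, exactly as you outline. Two small points to tidy up: the congruence you quote as $(2^k+2)(2^{k-1}+1)\equiv 2^k$ should read $(2^k+2)(2^k-1)\equiv 2^k\pmod{2^m-1}$, and the paper's $f_2,f_3,f_4$ carry the parameter $u$ because they are literally $f_1(x^d)$ rather than $f(x^d)$ --- neither affects the QM-equivalence conclusion.
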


\begin{remark} (i) Let $m>1$ be an odd positive integer, by Theorem 2.2 of \cite{DQWYY15},  $f(x)=x+x^3+x^{2^m-2^{(m+3)/2}+2}$ is a permutation polynomial over $\gf_{2^m}$, so
$$f_1(x)=\frac{f(a^{2^{m-1}}x)}{a^{2^{m-1}}}=x+ax^3+a^{2^m-2^{(m+1)/2}}x^{2^m-2^{(m+3)/2}+2}$$
is a permutation polynomial over $\gf_{2^m}$ for any $a\in\gf_q^\ast$, hence we obtain   Theorem 4.11 of \cite{LQC15}, and $f_1(x)$ is QM equivalent to $f(x)$.

(ii) Let $m>1$ be an odd positive integer, and let $f(x)=x+x^{2^{(m+1)/2}-1}+x^{2^m-2^{(m+1)/2}+1}$. Since
$$f_1(x)=f(x^{2^{(m+1)/2}+2})=x^{2^{(m+1)/2}+2}+x^{2^{(m+1)/2}}+x^2,$$
$f(x)$ is QM equivalence to $x^{2^{(m+1)/2}+2}+x^{2^{(m+1)/2}}+x^2=(x+1)^{2^{(m+1)/2}+2}+1$, which is a  permutation polynomial over $\gf_{2^m}$ since $\gcd(2^{(m+1)/2}+2,\,\, 2^m-1)=1$. Hence we have a simple proof of Theorem 2.1 of \cite{DQWYY15}.

There are also some other QM equivalent known permutation trinomials in the literature, we omit the details here.
 \end{remark}

\section*{Acknowledgements:} The authors would like to thank Michael E. Zieve for pointing out the errors in
Theorems 3.5, 4.5 and 4.6 in the original version of this paper and suggesting the corresponding corrections.
This manuscript is a corrected version of the original paper published in \textit{Fields and Their Applications 46 (2017), 38--56. }

\end{document}